\numberwithin{equation}{section}
\newtheorem{theorem}{Theorem}[section]
\newtheorem{lemma}[theorem]{Lemma}
\newtheorem{remark}[theorem]{Remark}
\newtheorem{remarks}[theorem]{Remarks}
\newcommand{\Bk}{\color{black}}
\newcommand{\Dom}{\mathsf{Dom}}
\newcommand{\Z}{\mathbb{Z}}
\newcommand{\R}{\mathbb{R}}
\begin{document}

\title[Semi-classical edge states for the Robin Laplacian]{ Semi-classical edge states for the Robin Laplacian}

\author{B.  Helffer}
\address[B. Helffer]{Laboratoire de Math\'ematiques Jean Leray, Universit\'e de Nantes, France.}
\email{Bernard.Helffer@univ-nantes.fr}

\author{A.  Kachmar}
\address[A. Kachmar]{Lebanese University, Department of Mathematics, Hadath, Lebanon.}
\email{akachmar@ul.edu.lb}

\date{\today}

\maketitle
\begin{abstract}
Motivated by the study of high energy Steklov eigenfunctions, we examine  the semi-classical Robin Laplacian. In the two dimensional situation, we determine an effective operator describing the asymptotic distribution of the negative  eigenvalues, and we prove that the corresponding eigenfunctions decay away from the boundary, for all dimensions. 
\end{abstract}

\section{Introduction}

\subsection{Motivation:  Generalized Steklov eigenfunctions}\

Let us consider an open bounded set $\Omega\subset\R^{\sf n}$ with a smooth connected boundary $\Gamma$. 
Let $-\Delta^D$ be the Dirichlet Laplace operator on $\Omega$ with spectrum $\sigma(-\Delta^D)$. We fix a constant $w\in\R\setminus\sigma(-\Delta^D)$.  For every function $\psi\in H^{1/2}(\Gamma)$, we assign the unique function $u=u_{w,\psi}$ as follows
\begin{equation}\label{eq:D}
-\Delta u=w \, u{~\rm on~}M\quad\mbox{ and } \quad u=\psi~{\rm on~}\Gamma\,.
\end{equation}
The operator  
\begin{equation}\label{eq:DtN}
 \psi\in H^{1/2}(\Gamma)\mapsto \Lambda(w) \psi:= \frac{\partial u_{w,\psi}}{\partial\nu}\in H^{-1/2}(\Gamma)
\end{equation}
is the Dirichlet to Neumann (DN) operator. Here $\nu$ denotes the unit outward normal vector of $\Gamma$. \\
The DN operator is a boundary pseudo-differential operator of order~$1$. Its spectrum consists of a non-decreasing sequence of eigenvalues $(\mu_m(w))_{m\geq1}$ counting multiplicities, known as the (generalized) Steklov eigenvalues\footnote{ The Steklov eigenvalues correspond to the case where $w=0$.}.  More precisely,
 \[\sigma(\Lambda (w))=\sigma^{\mathfrak s},\]
  where $\sigma^{\mathfrak s}$ is the Steklov spectrum
defined as the set of real numbers $\mu$ such that a non-trivial solution $u$ exists for the following Robin problem
\begin{equation}\label{eq:Robin}
-\Delta u=w\, u ~{\rm on~}\Omega\,,\quad u\in H^2(\Omega) ~\mbox{ and } ~\frac{\partial u}{\partial \nu} = \mu \,  u~{\rm on~}\Gamma\,.
\end{equation}
The study of  the localization of  the normalized solutions $u^\mu $ of \eqref{eq:Robin} in the limit\,\footnote{This amounts to the study of the Steklov eigenpair $(u^{\mu_m},\mu_m)$ as $m\to+\infty$.} $\mu\to+\infty$ is connected with  the semi-classical Robin Laplacian studied in \cite{HK-tams}.

Let us formulate the Steklov problem in the framework of \cite{HK-tams}. We introduce the semi-classical parameter $h=\mu^{-2}$ and  denote  by $u_h$ a non-trivial solution of \eqref{eq:Robin};  the eigenfunction $u_h$ satisfies 
\begin{equation}\label{eq:Robin*}
\begin{cases}
-\Delta u_h=w \,  u_h&~{\rm in~}\Omega\medskip\\
\displaystyle\frac{\partial u_h}{\partial \nu} = h^{-1/2} u&~{\rm on~}\Gamma
\end{cases}\,.
\end{equation}
We introduce the self-adjoint operator $\mathcal T_h$ with domain $D(\mathcal T_h)$ as follows
\[ \mathcal T_h=-h^2\Delta\,,\quad \mathfrak D(\mathcal T_h)=\{u\in H^2(\Omega)~:~\frac{\partial u}{\partial \nu} =h^{-1/2} u~{\rm on~}\Gamma\}\,.\]
Then \eqref{eq:Robin*} can be rewritten in the form
\begin{equation}\label{eq:Robin**} 
\mathcal T_hu_h=w_h u_h,\quad u_h\in \mathfrak D(\mathcal T_h)\setminus\{0\}\,,\quad w_h:=h^2w\,.
\end{equation}
By \cite{HK-tams},  in the planar situation $\mathsf n=2$,  if $w_h<0$ (see below the more precise condition), $u_h$ decays exponentially as follows: \\
{\it Given $M\in(0,1)$ and $\alpha\in(0,\sqrt{ M})$, there exist $h_0,C>0$ such that
\begin{equation}\label{eq:dec1}
\int_\Omega \left(|u_h|^2+h|\nabla u_h|^2 \right)\exp\left(\frac{2\alpha\,d(x,\Gamma)}{h^{1/2}}\right)\,dx \leq C\|u_h\|_{L^2(\Omega)}^2\,,  \end{equation}
for $h\in(0,h_0] $ and $w_h< - Mh$.}

 Here $d(\cdot,\Gamma)$ is the  normal distance to the boundary
\begin{equation}\label{eq:d(x,Gamma)} 
d(x,\Gamma)=\inf\{|x-y|~:~y\in\Gamma\}\quad(x\in\R^n)\,.
\end{equation}
This decay is a consequence of Agmon type estimates.   If we note that the ground state energy of the operator $\mathcal T_h$ satisfies 
 $\lambda_1(\mathcal T_h)=-h+o(h)$ as $h\to0_+$, the theorem applies with $\alpha <1$.  This decay result can be easily extended to the $\mathsf n$-dimensional situation \cite{PP}  from which we  can deduce pointwise estimates on $u_h$ (see Theorem~\ref{thm:dec-agm-pw}).

 Examining the case of the annulus, $\Omega= \{ x\in\R^2~:~r_0<|x|<1\}$, we observe that the constant $\alpha$ and the distance function $d(x,\Gamma)$ in \eqref{eq:dec1} are non-optimal. The example of the annulus suggest the optimal decay rate is achieved with $\alpha\approx 1$ and a distance function $ \hat d_{\Gamma}$  that depends on the curvature of the boundary (see \cite[Sec.~1.1.3]{GT}  and  \cite{DHN}).
 
Returning  to the problem in \eqref{eq:Robin}, we see  that  a consequence of \eqref{eq:dec1}  is that  the Steklov eigenfunction decays away from the boundary  provided the Steklov eigenvalue $\lambda$ satisfies  $w \leq -M \lambda^2$ and $\lambda\gg 1$ (i.e. $|w|\geq M\lambda^2\gg 1$).  

Our aim is to relax this strong assumption imposed on $w$.  
This question is motivated by the paper  by Galkowski-Toth \cite{GT} (who also refer to Hislop-Lutzer \cite{HiLu}  and Polterovich-Sher-Toth \cite{PST}) and by the PHD thesis  of G. Gendron \cite{GG}  discussing for special manifolds with boundary 
 the correspondence between the spectrum of the Steklov and the metric given on the manifold. In the first  contribution, it is assumed that $w =0$, and the above decay is obtained with $\alpha=1$, but under the condition that the boundary is analytic. Although not written explicitly, the computations by G. Gendron can also lead to the same result (but for a particular case).  This has been developed in  the recent work \cite{DHN}.

In the semi-classical framework, we will study the spectral properties of the eigenvalues of the Robin Laplacian $\mathcal T_h$ below the energy level $h^2\lambda_1^D(\Omega)$, where $\lambda_1^D(\Omega)$ is the ground state energy of the Dirichlet Laplacian. We obtain a boundary effective operator that describes the asymptotic distribution of the eigenvalues in the semi-classical limit (see Theorem~\ref{thm:main} below). The corresponding eigenfunctions (which can be viewed  as interior Steklov eigenfunctions in the sense of \cite{HiLu}  and  \cite{GT}) are expected to be localized near the domain's boundary (thereby called edge states in the literature). We confirm this property in Theorem~\ref{thm:conj} below, which is valid for any dimension ${\sf n}\geq 2$.

\subsection{Decay of eigenfunctions}
Using the boundary pseudo-differential calculus (as in \cite{HiLu}),  we obtain that all eigenfunctions corresponding to non-positive eigenvalues of the Robin Laplacian $\mathcal T_h$ decay away from the boundary, uniformly with respect to the non-positive eigenvalues.  This  extends the result of \cite{HiLu} up to the boundary, and presents a weaker version of the result of \cite{GT} but valid for the non-zero modes of $\mathcal T_h$. 

\begin{theorem}\label{thm:conj}
 Let $\lambda_1^D(\Omega)$ the principal eigenvalue of the Dirichlet Laplacian $-\Delta$ on $\Omega$.

For any $p\in \mathbb N$ and $ \epsilon < \lambda_1^D(\Omega)$,   there exist positive constants $C_{p,\epsilon},h_{p,\epsilon}$ and such that if $ (h,u_{h},w)$ is a solution of \eqref{eq:Robinw} 
\begin{equation}\label{eq:Robinw}
\begin{cases}
-\Delta u_h= w \, u_h &~{\rm in~}\Omega\medskip\\
\displaystyle\frac{\partial u_h}{\partial \nu} =h^{-1/2} u_h&~{\rm on~}\Gamma
\end{cases}\,,
\end{equation}
with $h\in (0,h_{p,\epsilon}]$, $w \leq  \epsilon$, and $\|u_h\|_{L^2(\partial\Omega)}=1$
 then it satisfies
 \begin{equation} \label{hk1.10} 
 |u_h (x)|\leq C_{p,\epsilon} \left(\frac{h}{\big(d(x,\Gamma)\big)^2}\right)^{p}\,,\, \forall x \in \Omega\,,
 \end{equation}
 where $d(x)=d(x,\Gamma)$ is the distance to the boundary introduced in \eqref{eq:d(x,Gamma)}.
 \end{theorem}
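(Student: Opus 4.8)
The plan is to prove the pointwise decay estimate by a bootstrap/iteration argument built on the boundary pseudo-differential calculus, following the strategy of Hislop–Lutzer but pushing the estimates all the way up to the boundary and making them uniform in the eigenvalue $w \le \epsilon$. The starting observation is that any solution $u_h$ of \eqref{eq:Robinw} with $w < \lambda_1^D(\Omega)$ is determined by its boundary trace $\psi_h := u_h|_\Gamma$ via $u_h = u_{w,\psi_h}$ in the notation of \eqref{eq:D}, and the Robin condition becomes the eigenvalue equation $\Lambda(w)\psi_h = h^{-1/2}\psi_h$ for the DN operator. Since $\Lambda(w)$ is a first-order elliptic boundary pseudo-differential operator, $h^{-1/2}$ large forces $\psi_h$ to concentrate in frequency at scale $\xi \sim h^{-1/2}$ on $\Gamma$; equivalently $\psi_h$ is (microlocally) a semiclassical object with effective parameter $h^{1/2}$. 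The interior elliptic estimate $(-\Delta - w)u_h = 0$ then propagates this to the interior: away from $\Gamma$ the function $u_h$ is governed by the resolvent of $-\Delta - w$ acting on a boundary layer, and each application of elliptic regularity in a region $\{d(x,\Gamma) > \delta\}$ gains a factor $h^{1/2}/\delta$ in $L^2$, hence a factor $h/\delta^2$ after Sobolev embedding to $L^\infty$ (matching the $(h/d(x,\Gamma)^2)^p$ in \eqref{hk1.10}).

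**The key steps**, in order, are: (i) normalize — from $\|u_h\|_{L^2(\Gamma)} = 1$ and $\Lambda(w)\psi_h = h^{-1/2}\psi_h$ deduce an a priori bound $\|\psi_h\|_{H^s(\Gamma)} \le C h^{-s/2}$ for each $s$, uniformly for $w \le \epsilon$ (here one uses that, since $\epsilon < \lambda_1^D(\Omega)$, the operator $\Lambda(w)$ has a uniform lower symbolic bound and $-\Delta - w$ is uniformly invertible with Dirichlet data, so the Poisson operator $P(w):\psi \mapsto u_{w,\psi}$ is bounded uniformly in $w$). (ii) Represent $u_h$ in a collar neighborhood $\Gamma \times [0,\delta_0)$ using boundary normal coordinates $(y,t)$, where $t = d(x,\Gamma)$; there $-\Delta = -\partial_t^2 + R(t,y,\partial_y) + \text{(lower order)}$ and the Poisson kernel satisfies, microlocally at frequency $\xi$ on $\Gamma$, a decay $\sim e^{-t\sqrt{|\xi|^2 - w}}$. (iii) For fixed $x$ with $t = d(x,\Gamma)$, combine (i) and (ii): split the frequency integral into $|\xi| \lesssim h^{-1/2+\eta}$ and $|\xi| \gtrsim h^{-1/2+\eta}$; on the low-frequency part the Poisson kernel contributes $e^{-ct h^{-1/2+\eta}}$ which beats any power of $h/t^2$, while on the high-frequency part one integrates by parts in $\xi$ using the a priori bound, each integration by parts producing a factor $t^{-1}$ times $h^{1/2}$ from the frequency localization of $\psi_h$. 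Iterating $p$ times (or choosing $s = 2p + \sf n$ in step (i) and combining with Sobolev) yields \eqref{hk1.10}.

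**The main obstacle** I anticipate is making the argument uniform in $w$ up to the threshold $\epsilon < \lambda_1^D(\Omega)$ and, simultaneously, handling the region where $d(x,\Gamma)$ is small but not comparable to $h^{1/2}$ — i.e. getting the estimate genuinely "up to the boundary" rather than only on a fixed compact subset of $\Omega$. Near $\Gamma$ the Poisson kernel no longer decays and the only gain comes from the frequency concentration of $\psi_h$; one must track carefully that the semiclassical wavefront set of $\psi_h$ lives in $\{|\xi| \sim h^{-1/2}\}$ (not merely $\{|\xi| \gtrsim h^{-1/2}\}$), which requires a two-sided symbolic bound on $\Lambda(w) - h^{-1/2}$ and hence a lower bound on the principal symbol $|\xi|$ of $\Lambda(w)$ that is uniform in $w \le \epsilon$. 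This is where the hypothesis $\epsilon < \lambda_1^D(\Omega)$ is essential: it guarantees the Dirichlet problem \eqref{eq:D} is solvable for all relevant $w$ with uniformly bounded Poisson operator, so $\Lambda(w)$ is a well-defined classical $\Psi$DO of order $1$ with the expected symbol $\sqrt{|\xi|_g^2 - w}$ and a uniform ellipticity constant. The rest — the collar parametrix, the integration-by-parts iteration, and the Sobolev step — is routine semiclassical pseudo-differential bookkeeping.
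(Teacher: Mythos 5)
Your proposal and the paper both begin from the identification $u_h = P(w)\psi_h$ with $\psi_h = u_h|_\Gamma$ and $\Lambda(w)\psi_h = h^{-1/2}\psi_h$, but after that the two arguments are genuinely different. The paper does not microlocalize $\psi_h$ at all: it iterates the eigenvalue identity to write $u_h = h^{p/2}\,\bigl(P\circ\Lambda(w)^p\bigr)\psi_h$, moves a degree-zero boundary $\Psi$DO (namely $\Lambda(w)^p(-\Delta_\Gamma)^{-p/2}$) onto $\psi_h$ using $L^2$-boundedness and the normalization $\|\psi_h\|_{L^2(\Gamma)}=1$, and controls the remaining factor $(-\Delta_\Gamma)^{p/2}P(x,\cdot)$ by the pointwise Poisson-kernel singularity estimate $|\partial_y^\alpha P(x,y)|\leq C_\alpha\,d(x)^{1-\mathsf n-|\alpha|}$ due to Englis and Krantz. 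No frequency splitting, no integration by parts in $\xi$, no collar parametrix. Your microlocal strategy --- concentration of $\psi_h$ at $|\xi|\sim h^{-1/2}$, two-sided symbolic bounds for $\Lambda(w)-h^{-1/2}$, a frequency decomposition, and an iterated gain of $h^{1/2}/\delta$ --- is closer in spirit to Galkowski--Toth and to the paper's Theorem~\ref{thm:conj-GT}, and for a fixed $w$ with smooth boundary it would plausibly yield a polynomial bound, so the mechanism is not unsound; it is simply heavier machinery than Theorem~\ref{thm:conj} needs, and it imports uniformity problems that the paper's direct argument avoids.

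The genuine gap is precisely in that uniformity over $w\leq\epsilon$. You correctly flag that $\epsilon<\lambda_1^D(\Omega)$ is needed for $\Lambda(w)$ and $P(w)$ to be well-defined, but your obstacle discussion addresses only the upper end of the $w$-range. The hypothesis also allows $w\to-\infty$, and this is not a corner case: a negative Robin eigenvalue $\lambda_n(\mathcal T_h)\approx -h$ corresponds, through $w=h^{-2}\lambda_n(\mathcal T_h)$, to $w\approx -h^{-1}$, so $w$ can blow up as $h\to 0$. In that regime the principal symbol $\sqrt{|\xi|^2-w}$ of $\Lambda(w)$ is no longer uniformly comparable to $|\xi|$, the $S^1_{1,0}$ semi-norms degenerate with $\sqrt{-w}$, and the uniform two-sided symbolic bound you call essential is simply not available. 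The paper is explicit about this obstruction and circumvents it with a dimensional-lifting device your proposal does not contain: choose $k\in\mathbb N$ and $a\in[\tfrac12,1]$ with $w+k^2\pi^2/a^2=0$, pass to $\hat\Omega=\Omega\times\mathbb T^1$ with the anisotropic Laplacian $\Delta_{\hat\Omega,a}$, and set $v_h(\hat x)=e^{ik\pi\theta}u_h(x)$; then $v_h$ solves a $w$-free harmonic Robin problem on $\hat\Omega$ whose only $w$-dependence sits in the anisotropy parameter $a$, confined to the compact set $[\tfrac12,1]$, so the $w=0$ Poisson-kernel estimates on $\hat\Omega$ are uniform and the bound descends to $u_h$ because $|v_h|=|u_h|$ and $d(\hat x,\hat\Gamma)=d(x,\Gamma)$. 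Without this (or an equivalent device) your argument covers only $w$ in a fixed compact subinterval of $(-\infty,\lambda_1^D(\Omega))$, which falls short of the theorem as stated.
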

One could hope in the case of an analytic boundary to have by using an analytic pseudo-differential calculus a control of the constant $C_{p,\epsilon}$  in \eqref{hk1.10} with respect to $p$   leading  to  an estimate of the following form
     \begin{equation}\label{HLimprovedc}
 | u_h(x)| \leq  C_0 \, d(x)^{2-\mathsf n} h^{-\frac 12} \exp\left( - \frac 1{C_1}  d(x)h^{-\frac 12}\right)\,,\, \forall x \mbox{ s.t. } d(x) \leq \frac{1}{C_2}\,,
  \end{equation}
  for some constants $C_0, C_1, C_2>0$, which could be difficult to determine explicitly.   We will discuss this 
   in Subsection~\ref{rem:exp-dec}.  Note that, for $w=0$, \eqref{eq:GT-gen} is established with $C_1=1-\eta$ and $\eta$ arbitrarly small  in \cite{GT} by using analytic microlocal methods. This  was improving  the non-optimal exponential bound of \cite{PST} in the $2$D case.  
 
In the case of an analytic boundary, based on the  analysis in \cite{GT},  we are able to improve improve  the decay in Theorem~\ref{thm:conj} for $w\not=0$.
 
 \begin{theorem} \label{thm:conj-GT}
Assume that $\Gamma$, the boundary of  $\Omega$, is analytic. 
For any  $\zeta < \lambda_1^D(\Omega)$ and $\eta >0$,   there exist positive constants $\varepsilon,C, h_0$ such that if $ (h,u_{h},w)$ is a solution of \eqref{eq:Robinw} with  $h\in (0,h_{0}]$, $w \leq  \zeta$, and $\|u_h\|_{L^2(\partial\Omega)}=1$, then  the following estimate holds,
\begin{equation}\label{eq:GT-gena}
\forall\,x\in\Omega,~  |u_h(x)| \leq C \, h^{-\frac{n}4+\frac18} \exp\left(-\frac{(1-\eta)  \inf(d(x,\Gamma),\varepsilon) }{h^{1/2} }\right)\,.
\end{equation}
\end{theorem}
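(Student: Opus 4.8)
The strategy is to convert \eqref{eq:Robinw} into a boundary problem and then lift the analytic microlocal construction of \cite{GT}, carried out there for the Steklov case $w=0$, to the shifted operator $-\Delta-w$, keeping all constants uniform for $w\le\zeta$. Since $w\le\zeta<\lambda_1^D(\Omega)$, the Dirichlet realization of $-\Delta-w$ on $\Omega$ is boundedly invertible, with norm at most $1/(\lambda_1^D(\Omega)-\zeta)$, so the Dirichlet--to--Neumann operator $\Lambda(w)$ of \eqref{eq:DtN} is well defined; since $\Gamma$ is analytic it is a classical analytic pseudodifferential operator of order $1$ on $\Gamma$, with principal symbol $|\xi'|_g$ and a lower-order part depending boundedly (for $w\le\zeta$) on $w$ and on the second fundamental form of $\Gamma$; in the regime $h^{-1/2}\gg1$ the characteristic set of $\Lambda(w)-h^{-1/2}$ is the variety $\{\,|\xi'|_g^2=h^{-1}+w\,\}$. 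Writing $\psi:=u_h|_\Gamma$, the Robin condition in \eqref{eq:Robinw} becomes $\Lambda(w)\psi=h^{-1/2}\psi$, while the normalization reads $\|\psi\|_{L^2(\Gamma)}=1$. Since $\Lambda(w)-h^{-1/2}$ is (analytically) elliptic off any fixed conic--annular neighbourhood of its characteristic variety, the analytic elliptic estimates of \cite{GT} force $\psi$ to be microlocally concentrated on $\{\,|\xi'|_g^2=h^{-1}+w+O(1)\,\}$ up to an error which is $O(e^{-ch^{-1/2}})$ in every Sobolev norm; in particular $\|\psi\|_{H^s(\Gamma)}\lesssim h^{-s/2}$, uniformly.

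Next I would describe $u_h$ in a fixed collar $\{d(x,\Gamma)<\delta_0\}$. Using boundary normal coordinates $x=(x',x_n)$ with $x_n=d(x,\Gamma)$, one factors $-\Delta-w=(\partial_{x_n}+R^-(x_n))(\partial_{x_n}-R^+(x_n))$ modulo an analytically regularizing operator, where $R^\pm(x_n)$ are analytic first-order operators on $\Gamma$ with principal symbol $\mp\sqrt{|\xi'|_{g(x_n)}^2-w}$; selecting the branch that decays as $x_n$ grows identifies, up to an $O(e^{-ch^{-1/2}})$ error, the slice $u_h(\cdot,x_n)$ with $\mathcal U(x_n)\psi$, the outgoing propagator $\mathcal U(x_n)$ having symbol comparable to $\exp\bigl(-\int_0^{x_n}\sqrt{|\xi'|_{g(s)}^2-w}\,ds\bigr)$. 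On the frequency support of $\psi$ this exponent equals $h^{-1/2}x_n\,\bigl(1+O(x_n)+O(h)\bigr)$, so for $\varepsilon$ small enough (in terms of the variation of $g(\cdot)$ along the normal geodesics and of $\eta$) it is at least $(1-\eta)h^{-1/2}x_n$ whenever $x_n\le\varepsilon$. This is the heart of the proof, and it is precisely the analytic (rather than $C^\infty$) pseudodifferential calculus that upgrades the remainders from $O(h^\infty)$ to $O(e^{-ch^{-1/2}})$.

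Combining the two steps, for $d(x,\Gamma)<\varepsilon$ one obtains a bound of the type $\|u_h(\cdot,x_n)\|_{H^s(\Gamma)}\lesssim h^{-s/2}\,e^{-(1-\eta)h^{-1/2}x_n}$; Sobolev embedding on $\Gamma$ together with a bookkeeping of the powers of $h$ (coming from $\|\psi\|_{L^2(\Gamma)}=1$, from the Sobolev index that is needed, and from the smoothing effect of $\mathcal U(x_n)$ for $x_n>0$) then yields \eqref{eq:GT-gena} in the collar, with prefactor $h^{-n/4+1/8}$. To reach $d(x,\Gamma)\ge\varepsilon$, I fix $\varepsilon<\delta_0$ small and put $\Omega'=\{x\in\Omega:\,d(x,\Gamma)>\varepsilon\}$: on $\partial\Omega'$ the collar estimate gives $|u_h|\lesssim h^{-n/4+1/8}e^{-(1-\eta)\varepsilon h^{-1/2}}$, and since $\lambda_1^D(\Omega')\ge\lambda_1^D(\Omega)>\zeta\ge w$ the Dirichlet problem for $-\Delta-w$ on $\Omega'$ is uniquely solvable, with $\|u_h\|_{L^\infty(\Omega')}\lesssim h^{-N}\|u_h\|_{L^\infty(\partial\Omega')}$ for some fixed $N$, uniformly in $w\le\zeta$; absorbing the harmless factor $h^{-N}$ and relabelling $\varepsilon,\eta$ gives \eqref{eq:GT-gena} for all $x\in\Omega$. (Theorem~\ref{thm:conj} is too weak for this last step, giving only polynomial decay; and for $w$ very negative the situation is easier still --- once $-w\gtrsim h^{-1}$ the equation $\Lambda(w)\psi=h^{-1/2}\psi$ has no nonzero solution and the statement is vacuous, while in the range $1\ll-w\ll h^{-1}$ the same microlocal argument applies, or one may invoke \eqref{eq:dec1}.)

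The step I expect to be the main obstacle is the collar analysis of the second paragraph: transporting the analytic microlocal parametrix of \cite{GT}, designed for $-\Delta$ (that is, $w=0$), to $-\Delta-w$ while keeping the analyticity radius, the exponential gain $e^{-ch^{-1/2}}$, and the polynomial prefactor all uniform over $w\le\zeta$, and correctly matching the geometric truncation $\inf(d(x,\Gamma),\varepsilon)$ appearing in \eqref{eq:GT-gena} to the variation of the induced metric along the normal geodesics in the collar.
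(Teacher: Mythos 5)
Your high-level plan -- reduce to a boundary eigenvalue problem for $\Lambda(w)$, run an analytic microlocal parametrix construction in a collar, and push into the bulk with the Dirichlet problem on $\Omega'=\{d(x,\Gamma)>\varepsilon\}$ -- is in the spirit of Galkowski--Toth, but it is \emph{not} how the paper argues, and it leaves a genuine gap. The paper does not re-derive the microlocal parametrix; it invokes Galkowski--Toth's Theorem~1 (restated as Theorem~\ref{thm:GT}) essentially as a black box, observing only that its proof is stable under two compact-parameter perturbations (replacing $-\Delta$ by $-\Delta-w$ with $w$ in a \emph{compact} subinterval of $(-\infty,\lambda_1^D(\Omega))$, or by $\mathrm{div}(\mathbf c\nabla)$ with constant $\mathbf c$ in a compact set). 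The bulk estimate is then obtained by the maximum principle, with a conjugation by the Dirichlet ground state $u^D$ when $0<w<\lambda_1^D(\Omega)$ so that the maximum principle is applicable; your argument via solvability of the Dirichlet problem on $\Omega'$ is morally the same and is fine once you also address the $0<w\le\zeta$ range (maximum principle fails directly there).

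The genuine gap is the uniformity of your collar construction over the full range $w\le\zeta$, and in particular for $1\ll -w\lesssim h^{-1}$. Your parenthetical dismissal does not hold up: the threshold at which $\Lambda(w)\psi=h^{-1/2}\psi$ ceases to have solutions is $-w\approx h^{-1}$, and for $-w$ slightly smaller the characteristic variety $\{\,|\xi'|_g^2=h^{-1}+w\,\}$ is at a tangential frequency scale $\sqrt{h^{-1}+w}$ that can be $o(h^{-1/2})$, even $O(1)$ or smaller. The analytic microlocal calculus of \cite{GT} is built at the fixed scale $|\xi'|\sim h^{-1/2}$, and you give no argument that the parametrix, its analyticity radius, the exponentially small remainders, and the polynomial prefactor remain uniform as this scale collapses. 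Nor does \eqref{eq:dec1} rescue the intermediate range: it gives a rate $\alpha<\sqrt{M}$ with $M=-wh$, which is much weaker than $1-\eta$ precisely when $1\ll -w\ll h^{-1}$. The paper circumvents all of this with an ``add a dimension'' device: choosing $k\in\mathbb N$ and $a\in[\frac12,1]$ with $w+k^2\pi^2/a^2=0$, it passes to $v_h(x,\theta)=e^{ik\pi\theta}u_h(x)$ on the cylinder $\hat\Omega=\Omega\times\mathbb T^1$, which solves the \emph{homogeneous} equation $-\Delta_{\hat\Omega,a}v_h=0$ for the divergence-form operator \eqref{eq:D-strip}; the extra Fourier mode supplies a tangential frequency $\sim\sqrt{-w}$ on $\hat\Gamma$, so that the total tangential frequency is always $\sim h^{-1/2}$ and the original $w$-dependence is replaced by the compact parameter $a\in[\frac12,1]$. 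This reduction to a compact parameter set is what licenses the uniform application of Galkowski--Toth's estimate; your proposal contains no substitute for it.
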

At the moment, it is unclear if the analytic assumptions are important for the validity of the estimates (and more accurate estimates discussed around  \eqref{eq:GT-gen}). 
 Note that  in the $C^\infty$ case, the microlocal approach proposed  in \cite{GT} could at most give an information modulo $\mathcal O (h^\infty)$  leading perhaps to \eqref{hk1.10} 
 with $\epsilon=\hat C h^\frac 12 |\log h|  $, for some $\hat C>0$, to compare with \eqref{hk1.10}.

 The method of Agmon estimates, recalled in \eqref{eq:dec1}, is on  one hand advantageous since it does not require the analytic hypothesis of the boundary, but on the other hand its drawback is that it becomes weaker, due to the condition on $\alpha$, as the  eigenvalue $w$ approaches $0$. However,  Theorem~1 of Galkowski-Toth \cite{GT}  and 
  Theorem~\ref{thm:conj} above show that  all eigenfunctions decay with a constant exponential profile under the  analytic boundary hypothesis. It would then be interesting to extend these estimates to the case of a $C^\infty$- boundary.

Positive indications will be given in the 2 dimensional case that we will discuss in the next section. Let us denote by
\begin{equation}\label{eq:def-L}
L=\frac{|\Gamma|}2\,,
\end{equation}
where  $|\Gamma|$ is the length of the boundary $\Gamma$. Assuming $\Gamma$ is connected, we will encounter quasi-modes normalized in $L^2(\Gamma)$ and having the following profile
\[u_h \approx (2L)^{-1/2}\exp\big(-h^{-1/2}d(x,\Gamma)\big) e^{ik\pi s/L} \]
with $k\in\Z$.
  Such quasi-modes appear also in Polterovich-Sher-Toth's paper \cite{PST}  for the eigenvalue $w=0$, where it is proved, in the case of an analytic boundary, that they are close to the actual zero-modes of  the operator $\mathcal T_h$. In the case where $\Gamma$ is not connected  \cite{PST}, we still encounter  the foregoing quasi-modes on each connected component of $\Gamma$ and their linear combinations.

\subsection{Asymptotic distribution of eigenvalues}

 There is a one-to-one correspondence between the negative eigenvalues of $\mathcal T_h$ and the Steklov  
eigenvalues below the energy level $h^{-1/2}$  (see \cite{BFK} and \cite[Lem.~1]{BCM} in a slightly different context). The correspondence being not explicit, it does not yield a precise description of the eigenvalues of the operator $\mathcal T_h$, based on the existing eigenvalue asymptotics for the Steklov eigenvalues, but it  does allow  to deduce the asymptotics for the counting function of the operator $\mathcal T_h$ from that of the DN operator $\Lambda(0)$. Our result on the Robin eigenvalues (Theorem~\ref{thm:main**}) is new and within our approach we can quantify the correspondence between the Robin and Steklov eigenvalues, and also to derive Weyl laws for the operator $\mathcal T_h$ (and consequently for the DN operator) in Theorem~\ref{thm:weyl}. 

Let us consider the case $\mathsf n=2$ for the sake of simplicity and assume that $\Omega$ is simply connected. We denote by $\big(\lambda_n(\mathcal T_h)\big)_{n\geq 1}$ the sequence of min-max eigenvalues of the operator $\mathcal T_h$. We will determine the asymptotic behavior of $\lambda_n(\mathcal T_h)$ in the regime $h \to 0_+$ thereby describing the distribution of all the negative eigenvalues of $\mathcal T_h$.

 For all $n\geq 2$ and $L$ introduced in \eqref{eq:def-L}, we introduce the eigenvalues
\[\lambda_n^{\rm F}(L)=\frac{\pi^2 k^2}{L^2}\quad{\rm for~}n\in\{2k,2k+1\}~\&~k\in\mathbb N\,,\]
which correspond to the  Fourier modes $e^{\pm i\pi ks/L}$ on $\R/ 2L\mathbb Z$.
\begin{theorem}\label{thm:main**}
Let  $\lambda_2^N(\Omega)$ denote the second eigenvalue of the Neumann Laplacian $-\Delta$ on $\Omega$ and consider a positive constant $\epsilon<\lambda_2^N(\Omega)$.  Assume furthermore that $\Omega$ is simply connected. Then, there exist positive constants $C$ and $h_0$ such that, for all $h\in(0,h_0] $, the following estimates hold,
\[ \big|\lambda_n(\mathcal T_h) +h-h^2\lambda_n^{\rm F}(L)\big|\leq Ch^{3/2}\big(1+h^{3/4}\lambda_n^{\rm F}(L)\big)\,,\]
provided that $\lambda_n(\mathcal T_h)<\epsilon h^2$\,.
\end{theorem}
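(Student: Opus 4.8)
The plan is to reduce $\mathcal{T}_h$, near the boundary, to an effective one-dimensional operator on $L^2(\Gamma)$ and then to read off its min--max spectrum. Since $\Omega$ is simply connected, $\Gamma$ is a single smooth Jordan curve, which I identify with $\R/2L\Z$ through its arc-length parameter $s$; I write $\kappa(s)$ for the signed curvature. Fix $\delta_0>0$ small enough that $(s,t)\mapsto$ (the point at normal distance $t=d(x,\Gamma)$ from $s\in\Gamma$) is a diffeomorphism from $(\R/2L\Z)\times[0,\delta_0)$ onto the tube $\Omega_{\delta_0}=\{x\in\Omega:d(x,\Gamma)<\delta_0\}$, in which the Euclidean metric reads $dt^2+a(s,t)^2\,ds^2$ with $a(s,t)=1-t\kappa(s)$.

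First I would localize near $\Gamma$. For a solution $(h,u_h,w)$ of \eqref{eq:Robinw} with $w\le\epsilon$ and $\lambda:=\lambda_n(\mathcal{T}_h)<\epsilon h^2$, Theorem~\ref{thm:conj} (used with $p$ arbitrarily large), together with standard trace and interior elliptic estimates, gives $\|u_h\|_{L^2(\{d\ge\delta_0/2\})}+\|h\nabla u_h\|_{L^2(\{d\ge\delta_0/2\})}=O(h^\infty)\,\|u_h\|_{L^2(\Omega)}$. An IMS-type partition of unity then shows that, modulo $O(h^\infty)$, the min--max values of $\mathcal{T}_h$ that lie below $\epsilon h^2$ agree with those of the operator $\mathcal{T}_h^{\delta_0}$ obtained by restricting $-h^2\Delta$ to $\Omega_{\delta_0}$, keeping the Robin condition on $\Gamma$ and imposing a Dirichlet condition (for upper bounds) or a Neumann condition (for lower bounds) on $\{d=\delta_0\}$; on the relevant part of the spectrum these two choices differ by $O(h^\infty)$. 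The hypothesis $\epsilon<\lambda_2^N(\Omega)$ serves to keep all these eigenvalues strictly below the first non-trivial ``bulk'' level of $\mathcal{T}_h$, which is exactly what makes the a priori decay of Theorem~\ref{thm:conj} applicable and ensures that the effective operator built below accounts for all eigenvalues in the statement.

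Next I would pass to tubular coordinates and conjugate $\mathcal{T}_h^{\delta_0}$ by the multiplication $u\mapsto a^{1/2}u$, so that it acts on the flat space $L^2\big((\R/2L\Z)\times[0,\delta_0);ds\,dt\big)$ as
\[
\widehat{\mathcal{T}}_h=-h^2\partial_t^2-h^2\partial_s\big(a^{-2}\partial_s\,\cdot\,\big)+h^2V(s,t),
\]
with $V\in C^\infty$ bounded, together with the boundary condition $\partial_tu(s,0)+h^{-1/2}u(s,0)=-\tfrac12\kappa(s)u(s,0)$ at $t=0$ (and the chosen condition at $t=\delta_0$). Rescaling $\tau=h^{-1/2}t$ and writing $\widehat{\mathcal{T}}_h=h\,\mathfrak{L}_h$, the transverse principal part of $\mathfrak{L}_h$ is the half-line model operator $\mathfrak{h}_0=-\partial_\tau^2$ on $L^2(\R_+)$ with $u'(0)=-u(0)$, which has a single eigenvalue $-1$, with normalized eigenfunction $\phi_0(\tau)=\sqrt2\,e^{-\tau}$, separated by a gap $1$ from $\sigma_{\rm ess}(\mathfrak{h}_0)=[0,+\infty)$; the remaining terms of $\mathfrak{L}_h$ are a boundary perturbation felt at order $h^{1/2}$ (producing the curvature term $-h^{1/2}\kappa(s)$ on the reduced operator), the tangential term $h(-\partial_s^2)$, and contributions of order $h^{3/2}$ or smaller (the metric corrections $\sim h^{3/2}\tau\kappa\,\partial_s^2$, the lower-order tangential terms, the potential $h^2V$). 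Using the gap, a Grushin/Feshbach--Schur reduction along the projection $Pu(s)=\langle u(s,\cdot),\phi_0\rangle_{L^2(\R_+)}$ produces an effective operator on $L^2(\R/2L\Z)$ of the form $\mathcal{E}_h=-1+h\big(-\partial_s^2\big)+\mathcal{R}_h$, where $\mathcal{R}_h$ consists of the curvature term $-h^{1/2}\kappa(s)$ and smaller pieces and satisfies $\|\mathcal{R}_h\psi\|\le C\big(h^{1/2}\|\psi\|+h^{3/2}\|\partial_s^2\psi\|\big)$; moreover the $n$-th min--max value of $\mathfrak{L}_h$ below $\epsilon h$ equals that of $\mathcal{E}_h$ up to $O\big(h^{1/2}+h^{3/2}\lambda_n^{\rm F}(L)\big)$.

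Finally, since $-\partial_s^2$ on $L^2(\R/2L\Z)$ has eigenvalues $\pi^2k^2/L^2=\lambda_n^{\rm F}(L)$ with the stated indexing (and $\lambda_1^{\rm F}(L):=0$), min--max perturbation theory gives that the $n$-th eigenvalue of $\mathcal{E}_h$ equals $-1+h\lambda_n^{\rm F}(L)+O\big(h^{1/2}+h^{3/2}\lambda_n^{\rm F}(L)\big)$, the $O(h^{1/2})$ being due entirely to the curvature term. Multiplying back by $h$ and restoring the $O(h^\infty)$ errors from the localization step yields $\lambda_n(\mathcal{T}_h)=-h+h^2\lambda_n^{\rm F}(L)+O\big(h^{3/2}+h^{5/2}\lambda_n^{\rm F}(L)\big)$ whenever $\lambda_n(\mathcal{T}_h)<\epsilon h^2$, which implies the asserted estimate since $h^{5/2}\le h^{9/4}$. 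I expect the main obstacle to be the third step: making the Grushin reduction genuinely quantitative and uniform in $n$ all the way up to $\lambda_n^{\rm F}(L)\sim h^{-1}$, i.e.\ up to the top of the edge-state band, where consecutive eigenvalues are only $O(h^{3/2})$ apart and the tangential frequency is comparable to the semiclassical scale. This forces one to keep the full non-flat tangential operator inside the reference operator rather than treat it perturbatively, and to control carefully the off-diagonal coupling between the $P$ and $1-P$ sectors, the metric corrections, and the exponentially small tails created by truncating the tube at $t=\delta_0$.
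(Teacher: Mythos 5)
Your proposal takes a genuinely different route from the paper, and it leaves the central technical step unproven, a gap which you yourself flag at the end.

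\textbf{What the paper does.} The paper derives Theorem~\ref{thm:main**} from Theorem~\ref{thm:main}, whose proof does not invoke Theorem~\ref{thm:conj} at all. It splits $L^2(\Omega)$ as $L^2(\Omega_{h^\rho})\oplus L^2(\Omega\setminus\overline{\Omega_{h^\rho}})$ with $\rho=\tfrac7{16}$, and inside the boundary layer it projects onto an $s$-dependent transverse quasi-mode $u^{\rm tran}_h(s,t)$ built from the second-order corrected one-dimensional profile $u^{\rm app}_{h,\kappa(s)}$ (not the bare exponential $\phi_0$). The spectral gap in the complement is supplied by Lemma~\ref{lem:gs=app}. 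Upper and lower bounds follow by Dirichlet and Neumann bracketing at $t=h^\rho$ and at the interface with the interior. The tangential energy of the projected part is compared to $\int |k'_\psi|^2$ via Cauchy's inequality with parameter $\eta=h^{1/4}$, which is exactly where the $h^{3/4}$ correction to the coefficient of $-d^2/ds^2$ in $\mathcal L_h^{\pm\mathfrak c}$ (and hence the $h^{9/4}\lambda_n^{\rm F}$ term in the final error) originates. The hypothesis $\epsilon<\lambda_2^N(\Omega)$ is used so that the interior Neumann piece contributes at most one mode below $\epsilon h^2$ (its first Neumann eigenvalue is $0$, simple), see Remark~\ref{rem:e.v.<ep}, not for ``localization via decay.''

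\textbf{What you do differently, and the gap.} You first invoke Theorem~\ref{thm:conj} to localize, then conjugate by $a^{1/2}$ to flatten the measure, then project onto the $s$-independent half-line mode $\phi_0=\sqrt2e^{-\tau}$ and perform a Grushin/Feshbach--Schur reduction. This is a legitimate alternative strategy, and your claimed remainder $O(h^{3/2}+h^{5/2}\lambda_n^{\rm F})$ is in fact \emph{sharper} than the theorem's $O(h^{3/2}+h^{9/4}\lambda_n^{\rm F})$ --- unsurprising, since the paper itself (Remark~\ref{rem:main}.3) notes that its coefficient error is worse than what is known in \cite{HKR}, the loss coming precisely from the $\eta=h^{1/4}$ balancing. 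But the entire weight of the argument rests on the statement ``the $n$-th min--max value of $\mathfrak L_h$ below $\epsilon h$ equals that of $\mathcal E_h$ up to $O(h^{1/2}+h^{3/2}\lambda_n^{\rm F})$,'' and you give no proof of it; indeed you explicitly say this is ``the main obstacle.'' The difficulties are real: (i) the boundary condition $\partial_\tau u+u=-\tfrac12 h^{1/2}\kappa(s)u$ is $s$-dependent, so $P\phi_0$ is not in the operator's form domain and either the projection must be made $s$-dependent (essentially rediscovering the paper's $u^{\rm app}_{h,\kappa(s)}$) or the boundary mismatch must be fed into the off-diagonal blocks; (ii) for $\lambda_n^{\rm F}\sim h^{-1}$, the tangential piece $h\lambda_n^{\rm F}\sim 1$ is not small compared with the transverse gap, so the Schur complement cannot be expanded perturbatively in the tangential operator and one needs a uniform resolvent estimate on the $(1-P)$ sector that you do not provide; (iii) the truncation of the tube must be handled so that the Dirichlet/Neumann mismatch does not pollute the $O(h^{3/2})$ level for all $n$ in the band. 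The paper sidesteps (i) by choosing a $\kappa$-dependent quasi-mode and (ii)--(iii) by never inverting anything: the Dirichlet/Neumann form bracketing plus Lemma~\ref{lem:gs=app} give two-sided bounds directly. So while your framework is plausible and could in principle yield a sharper remainder, as written it is a plan rather than a proof; the quantitative Grushin step uniform in $n$ up to the top of the edge band is exactly what is missing.
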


The proof of Theorem~\ref{thm:main**} follows by deriving an effective operator approximating the operator $\mathcal T_h$. The precise statement is given in Theorem~\ref{thm:main}.

The estimates of Theorem~\ref{thm:main**} are interesting when $n\gg h^{-1/4}$, since by \cite[Thm.~2.1]{HK-tams} and \cite[Prop.~7.4]{HKR}, $\lambda_n(\mathcal T_h)\leq -h-h^{3/2}\kappa_{\max}+\mathcal O(h^{7/4})$ for $n\lesssim h^{-1/4}$. In particular, the negative eigenvalues of $\mathcal T_h$ satisfy
\[
\lambda_{2k}(\mathcal T_h)\sim -h+\pi^2L^{-2}k^2h^2~\mbox{ and }~\lambda_{2k+1}(\mathcal T_h)-\lambda_{2k}(\mathcal T_h) =\mathcal O(h^{3/2})\]
provided $k\gg h^{-1/4}$. This is consistent with the results in \cite{E, GiI, Roz} and \cite[Sec.~3.1]{PST} dealing with the spectrum of the DN operator $\Lambda(0)$, whose principal symbol coincides with $\sqrt{-\Delta_\Gamma}$, the square root of the Laplace-Beltrami operator on $\Gamma$.  In fact, the  Steklov eigenvalues $(\mu_n)_{n\geq 1}$ of $\Lambda(0)$ satisfy  the following asymptotics \cite{Roz}
\begin{equation}\label{asRoz}
 \mu_{2k+1}=\mu_{2k}+\mathcal O(k^{-\infty})=\frac{\pi}{L}k+\mathcal O(k^{-\infty})\quad(k\to+\infty)\,. 
 \end{equation}
So, we get  the following correspondence between the negative Robin eigenvalues $\{\lambda_n(\mathcal T_h)<0\}$ and
the Steklov eigenvalues $\{\mu_n< h^{-1/2}\}$\,:
\[ \mu_{n}\sim h^{-1}\sqrt{h+\lambda_{n}(\mathcal T_h)}   ~{\rm ~for~} h^{-1/4}\ll k_n\leq \frac{L}\pi h^{-1/2}+\mathcal O(h^{1/2})  \,.\]
 As a direct consequence of Theorem~\ref{thm:main**}, we obtain   a Weyl law extending   earlier results  \cite{HKR, KPR, KN}.
\begin{theorem}\label{thm:weyl}
Assume that $\Omega$ is simply connected. Let $\epsilon\in[0,\lambda_2^N(\Omega))$. For all $h>0$ and $\lambda\in\R$, we denote by
\[\mathsf N(\mathcal T_h,\lambda):={\rm tr}\Big(\mathbf 1_{(-\infty,\lambda)}\big(\mathcal T_h \big)\Big)\,.\]
Then we have the following asymptotics as $h\to0_+$, 
\[\mathsf N(\mathcal T_h,\epsilon h^2) = \frac{|\Gamma|}{\pi} h^{-1/2}+\mathcal O(h^{-1/4})\,,\]
Furthermore, 
\[\mathsf N(\mathcal T_h,\lambda h) = \frac{|\Gamma|}{\pi} \sqrt{1+\lambda}\,h^{-1/2}+\mathcal O(h^{-1/4})\,,\]
holds for all $\lambda\in(-1,0)$.
\end{theorem}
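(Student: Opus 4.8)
The plan is to derive the Weyl law directly from the eigenvalue asymptotics of Theorem~\ref{thm:main**}, treating the two stated formulas as consequences of a single counting argument. First I would observe that, by the lower bound quoted after Theorem~\ref{thm:main**} (from \cite[Thm.~2.1]{HK-tams} and \cite[Prop.~7.4]{HKR}), every eigenvalue $\lambda_n(\mathcal T_h)$ with $n\lesssim h^{-1/4}$ lies below $\epsilon h^2$, while for larger $n$ Theorem~\ref{thm:main**} gives $\lambda_n(\mathcal T_h)=-h+h^2\lambda_n^{\rm F}(L)+\mathcal O(h^{3/2}(1+h^{3/4}\lambda_n^{\rm F}(L)))$ as long as $\lambda_n(\mathcal T_h)<\epsilon h^2$. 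So counting eigenvalues below $\epsilon h^2$ amounts to counting the Fourier modes $e^{\pm i\pi k s/L}$ with $-h+h^2\pi^2k^2/L^2 < \epsilon h^2$, i.e. $k^2 < (L^2/\pi^2)(h^{-1}+\epsilon)$, up to the error terms. Since each value $k\geq 1$ contributes two modes (indices $n=2k,2k+1$) and $k=0$ contributes one, the count is $2k_{\max}+1$ where $k_{\max} = (L/\pi)h^{-1/2}\sqrt{1+\epsilon h}$. Expanding, $2k_{\max} = (2L/\pi)h^{-1/2}+\mathcal O(h^{1/2}) = (|\Gamma|/\pi)h^{-1/2}+\mathcal O(h^{1/2})$, which gives the first asymptotics once we check that the cumulative effect of the $\mathcal O(h^{3/2}(1+h^{3/4}\lambda_n^{\rm F}))$ errors and the crossover region $n\sim h^{-1/4}$ is absorbed in $\mathcal O(h^{-1/4})$.

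The error analysis is the only delicate point, so I would carry it out carefully. The region $n\lesssim h^{-1/4}$ contributes at most $\mathcal O(h^{-1/4})$ to the count unconditionally, matching the claimed remainder. For the main range, the per-eigenvalue error in Theorem~\ref{thm:main**} near the threshold $\lambda_n(\mathcal T_h)\approx \epsilon h^2$ is of size $h^{3/2}(1+h^{3/4}\lambda_n^{\rm F}(L))$; near threshold $\lambda_n^{\rm F}(L)\sim h^{-1}$, so this error is $\mathcal O(h^{3/2}\cdot h^{-1/4})=\mathcal O(h^{5/4})$, which is $o(h^2)$ — smaller than the spacing $h^2\lambda_{n+1}^{\rm F}-h^2\lambda_n^{\rm F}\sim h^2\cdot k/L\sim h^{3/2}$ between consecutive squared Fourier modes near the threshold. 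Hence the error shifts the counting threshold in the variable $k$ by at most $o(1)$, changing the count by $\mathcal O(1)$, well inside $\mathcal O(h^{-1/4})$. I would also note that the hypothesis $\epsilon<\lambda_2^N(\Omega)$ is exactly what makes Theorem~\ref{thm:main**} applicable, and that the endpoint $\epsilon=0$ is included by the same argument (taking a sequence $\epsilon_j\downarrow 0$ or observing that no eigenvalue equals $0$ generically; more robustly, the count at $\lambda=0$ and at $\lambda=\epsilon h^2$ for small fixed $\epsilon$ differ by $\mathcal O(h^{-1/4})$ by the spacing estimate above).

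For the second formula, with $\lambda\in(-1,0)$ fixed, I would repeat the computation with threshold $\lambda h$ in place of $\epsilon h^2$: counting $\lambda_n(\mathcal T_h)<\lambda h$ means counting $k$ with $-h+h^2\pi^2k^2/L^2<\lambda h$, i.e. $k^2<(L^2/\pi^2)(1+\lambda)h^{-1}$, so $k_{\max}=(L/\pi)\sqrt{1+\lambda}\,h^{-1/2}$ and the count is $2k_{\max}+\mathcal O(\cdot)=(|\Gamma|/\pi)\sqrt{1+\lambda}\,h^{-1/2}+\mathcal O(h^{-1/4})$. Here one must recheck that the error terms remain controlled: since $\lambda<0$ is bounded away from $0$, the relevant $\lambda_n^{\rm F}(L)$ stay of order $h^{-1}$, the per-eigenvalue error is again $\mathcal O(h^{5/4})$, far below the $\mathcal O(h^{3/2})$ mode spacing, and the crossover region still contributes $\mathcal O(h^{-1/4})$. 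The main obstacle, such as it is, is bookkeeping: making sure the two-to-one index correspondence $n\leftrightarrow k$, the finitely many small indices not covered by Theorem~\ref{thm:main**}, and the accumulation of $\mathcal O(h^{3/2}(1+h^{3/4}\lambda_n^{\rm F}))$ errors near threshold all combine into the single clean remainder $\mathcal O(h^{-1/4})$; no new analytic input beyond Theorem~\ref{thm:main**} is needed.
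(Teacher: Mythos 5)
Your overall strategy is the same as the paper's: Theorem~\ref{thm:weyl} is derived directly as a counting consequence of the eigenvalue asymptotics in Theorem~\ref{thm:main**}, splitting the eigenvalues into the small-index regime $n\lesssim h^{-1/4}$ (handled by \cite{HK-tams, HKR}) and the Fourier-mode regime, and the bookkeeping you describe is essentially what the authors intend when they call the Weyl law a ``direct consequence.'' The structure is correct.

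However, there is an arithmetic slip in your error analysis near the threshold that deserves fixing. You estimate the per-eigenvalue error at $\lambda_n^{\rm F}(L)\sim h^{-1}$ as $h^{3/2}\cdot h^{3/4}\cdot h^{-1}=h^{5/4}$, which is right, and then assert that $h^{5/4}=o(h^2)$ and that $h^{5/4}$ is \emph{smaller} than the Fourier-mode spacing $h^2\lambda_{n+1}^{\rm F}-h^2\lambda_n^{\rm F}\sim h^{3/2}$. Both of these are backwards: for $h\to 0_+$ the function $h\mapsto h^a$ is decreasing in $a$, so $h^{5/4}$ is \emph{larger} than both $h^{3/2}$ and $h^2$, by factors $h^{-1/4}$ and $h^{-3/4}$ respectively. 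Consequently the per-eigenvalue error does \emph{not} resolve individual modes near the threshold; writing the threshold inequality $h^2\pi^2k^2/L^2<h+\epsilon h^2+|E_n|$ with $|E_n|\lesssim h^{5/4}$ gives $k^2<(L^2/\pi^2)h^{-1}\big(1+\mathcal O(h^{1/4})\big)$, hence $k_{\max}$ is shifted by $\mathcal O(h^{-1/2}\cdot h^{1/4})=\mathcal O(h^{-1/4})$, not by $o(1)$, and the count changes by $\mathcal O(h^{-1/4})$, not by $\mathcal O(1)$. Fortunately this is exactly the size of the target remainder, so the conclusion of Theorem~\ref{thm:weyl} survives, and in fact this threshold-shift contribution and the crossover contribution from $n\lesssim h^{-1/4}$ are of the \emph{same} order $\mathcal O(h^{-1/4})$ --- both matter, rather than the crossover region alone as your write-up suggests. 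You should also be a bit more explicit about the logical structure of the counting: Theorem~\ref{thm:main**} is a conditional statement (valid only for those $n$ already known to satisfy $\lambda_n(\mathcal T_h)<\epsilon h^2$), so to get two-sided bounds on $\mathsf N$ one should argue from both inequalities in Theorem~\ref{thm:main} (or, equivalently, show separately that $\lambda_n(\mathcal T_h)<\epsilon h^2$ for all $n$ up to $(|\Gamma|/\pi)h^{-1/2}-Ch^{-1/4}$, and that $\lambda_n(\mathcal T_h)\geq\epsilon h^2$ for all $n\geq(|\Gamma|/\pi)h^{-1/2}+Ch^{-1/4}$), rather than applying Theorem~\ref{thm:main**} in a way that presupposes the set being counted.
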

The asymptotics of $\mathsf N(\mathcal T_h,\lambda h)$ and $\mathsf N(\mathcal T_h,\epsilon h^2)$ hold uniformly with respect to $\lambda\in(-1,0)$ and $\epsilon\in[0,\lambda_2^N(\Omega))$ respectively.
Noting that 
\[\mathsf N(\mathcal T_h,0)=\mathsf N(\Lambda(0),h^{-1/2})\,,\]
we recover the leading order term for the existing results on the DN operator (see  \cite[Eq.~(2.1.4)]{GiI-b})
\begin{equation}\label{eq:N<0-Ro}
\mathsf N(\Lambda(0),h^{-1/2}) =\frac{|\Gamma|}{\pi}h^{-1/2}+ \mathcal O(1)\,.
\end{equation}
The asymptotics in \eqref{eq:N<0-Ro} continues to hold for the generalized DN operator $\Lambda(\epsilon)$  introduced in \eqref{eq:DtN}  if $\epsilon < \lambda_2^D(\Omega)$ is fixed (or in a compact interval of $(-\infty, \lambda_2^D(\Omega)$). Moreover, $\mathsf N(\Lambda(\epsilon),h^{-1/2})=\mathsf N(\mathcal T_h,\epsilon h^2)$, hence we get
\[\mathsf N(\mathcal T_h,\epsilon h^2) =\frac{|\Gamma|}{\pi}h^{-1/2}+ \mathcal O(1)\]
which gives a more accurate estimate of the remainder  than the one appearing in Theorem~\ref{thm:weyl}.

\subsection*{Organization of the paper}
\begin{itemize}
\item[--] In Sec.~\ref{sec:Agm}, we show how we can extract pointwise bounds on the eigenfunctions from the Agmon decay estimates.
\item[--] In Sec.~\ref{sec:PD}, we use a pseudo-differential calculus to prove Theorems~\ref{thm:conj} and \ref{thm:conj-GT}.
\item[--] In Sec.~\ref{sec:1d} we analyze 1D operators that we use later in Sec.~\ref{sec:eff-op} to derive an effective operator for the Robin Laplacian and prove Theorem~\ref{thm:main}.
\end{itemize}

\section{Pointwise bounds via Agmon estimates} \label{sec:Agm}

Using the elliptic and Agmon estimates, we can derive pointwise bounds on the low-energy eigenfunctions of the semi-classical Robin Laplacian operator $\mathcal T_h$.  This was standard in the case of Dirichlet case but because the Robin condition includes the parameter inside the boundary condition, we feel that it is useful to give the details in this new case.

\begin{theorem}\label{thm:dec-agm-pw}
Given $M\in(0,1)$ and $\alpha\in(0,\sqrt{ M})$, there exist positive constants $\varepsilon_0,h_0,C>0$ such that, if $h\in(0,h_0)$ and $u_h$ is a solution of
\[\begin{cases}
-\Delta u_h= w \, u_h &~{\rm in~}\Omega\medskip\\
\displaystyle\frac{\partial u_h}{\partial \nu} =h^{-1/2} u_h&~{\rm on~}\Gamma
\end{cases}\]
with $w<-Mh^{-1}$ and $\|u_h\|_{L^2(\Gamma)}=1$, then 
\begin{equation}\label{eq:dec-pw1}
|u_h(x)|\leq Ch^{-\frac{\mathsf n}4-\frac14}\exp\left(-\frac{\alpha\min\big( d(x,\Gamma),\varepsilon_0 \big)}{h^{1/2}}\right)\quad(x\in\Omega)\,.
\end{equation}
\end{theorem}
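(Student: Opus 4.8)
The plan is to combine the Agmon-type weighted estimate \eqref{eq:dec1} (in its $\mathsf n$-dimensional form, see \cite{PP}) with local elliptic estimates performed after the natural semi-classical rescaling $x\mapsto h^{1/2}y$.

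First I would record what the normalization $\|u_h\|_{L^2(\Gamma)}=1$ gives. Writing $w_h:=h^2w$, the hypothesis $w<-Mh^{-1}$ reads $w_h<-Mh$, so $u_h$ — which is automatically smooth up to $\Gamma$ by elliptic regularity — is in the scope of \eqref{eq:dec1}. Testing the eigenvalue equation against $u_h$ and integrating by parts gives
\[
\int_\Omega|\nabla u_h|^2\,dx+|w|\int_\Omega|u_h|^2\,dx=h^{-1/2}\|u_h\|_{L^2(\Gamma)}^2=h^{-1/2}\,,
\]
whence $\|u_h\|_{L^2(\Omega)}^2\le M^{-1}h^{1/2}$ and $\|\nabla u_h\|_{L^2(\Omega)}^2\le h^{-1/2}$. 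Inserting the bound on $\|u_h\|_{L^2(\Omega)}$ into \eqref{eq:dec1} produces a constant $C=C(M,\alpha,\Omega)$ with
\[
\int_\Omega\Big(|u_h|^2+h|\nabla u_h|^2\Big)\exp\!\Big(\frac{2\alpha\,d(x,\Gamma)}{h^{1/2}}\Big)\,dx\le C\,h^{1/2}\,.
\]

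Next I would convert this weighted $L^2$ bound into the pointwise one by a local analysis near an arbitrary $x_0\in\Omega$, on the semi-classical ball $B(x_0,Rh^{1/2})$ with $R$ fixed. In the variable $x=x_0+h^{1/2}y$ the equation $-h^2\Delta u_h=w_h u_h$ becomes $-\Delta_y\widetilde u=(hw)\widetilde u$, and here $hw$ is bounded: since $\mathcal T_h\ge -C_0h$ for $h$ small (a standard lower bound, cf. $\lambda_1(\mathcal T_h)=-h+o(h)$), any eigenvalue obeys $-C_0h\le w_h<-Mh$, i.e. $M<|hw|\le C_0$. When $B(x_0,2Rh^{1/2})$ meets $\Gamma$ one straightens the boundary in the tubular coordinates built from $d(\cdot,\Gamma)$, available in a collar $\{d(\cdot,\Gamma)<\varepsilon_0\}$, and there the Robin condition $\partial_\nu u_h=h^{-1/2}u_h$ becomes the $h$-free Robin condition $\partial_\tau\widetilde u=\widetilde u$, the rescaled metric differing from the Euclidean one by $O(h^{1/2})$. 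Interior elliptic estimates — and elliptic estimates up to the boundary for this $O(1)$-Robin problem — together with Sobolev embedding, after finitely many bootstrap steps, give $\|\widetilde u\|_{L^\infty(B_1)}\le C\|\widetilde u\|_{L^2(B_{2R})}$ uniformly in $h$; undoing the rescaling,
\[
|u_h(x_0)|\le C\,h^{-\mathsf n/4}\,\|u_h\|_{L^2(B(x_0,2Rh^{1/2})\cap\Omega)}\,.
\]

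To conclude, on $B(x_0,2Rh^{1/2})$ one has $d(y,\Gamma)\ge d(x_0,\Gamma)-2Rh^{1/2}$, so $\exp(2\alpha d(y,\Gamma)/h^{1/2})\ge e^{-4\alpha R}\exp(2\alpha d(x_0,\Gamma)/h^{1/2})$; feeding this into the weighted bound gives $\|u_h\|_{L^2(B(x_0,2Rh^{1/2})\cap\Omega)}^2\le e^{4\alpha R}\exp(-2\alpha d(x_0,\Gamma)/h^{1/2})\,Ch^{1/2}$, hence
\[
|u_h(x_0)|\le C\,h^{-\mathsf n/4+1/4}\exp\!\Big(-\frac{\alpha\,d(x_0,\Gamma)}{h^{1/2}}\Big)\,.
\]
Since $h^{-\mathsf n/4+1/4}\le h^{-\mathsf n/4-1/4}$ for $h\le1$, and replacing $d(x_0,\Gamma)$ by $\min\big(d(x_0,\Gamma),\varepsilon_0\big)$ only enlarges the right-hand side (for $x_0$ with $d(x_0,\Gamma)\ge\varepsilon_0$ the purely interior estimate applies verbatim and there $\min(d(x_0,\Gamma),\varepsilon_0)=\varepsilon_0$), we obtain \eqref{eq:dec-pw1}. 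I expect the main obstacle to be the boundary step: one must check that the elliptic regularity up to $\Gamma$ holds with constants \emph{uniform in $h$} — which is exactly what the $h^{1/2}$-rescaling secures by rendering the Robin condition $h$-independent — and keep track of the $O(h^{1/2})$ perturbations from straightening $\Gamma$; the interior estimates and the bookkeeping of powers of $h$ are routine.
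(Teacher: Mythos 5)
Your proof is correct, and it takes a genuinely different route from the paper's. The paper conjugates \emph{globally}: it sets $v_h=u_h\exp(\alpha\tilde t/h^{1/2})$ with $\tilde t$ a smooth cutoff of the boundary distance, observes that $v_h$ solves a non-homogeneous Neumann problem with source $f_h$ and boundary datum $g_h=(\alpha+1)h^{-1/2}v_h$, applies global elliptic $H^2$ estimates (iterated up to $H^{k_*}$, $k_*>\mathsf n/2$, when $\mathsf n\geq 4$), and then passes to $L^\infty$ by Sobolev embedding; the $h$-dependence of the conjugated data leaves the stated power $h^{-\mathsf n/4-1/4}$ once $\|u_h\|_{L^2(\Omega)}=O(h^{1/4})$ is used. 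You instead rescale \emph{locally} at the natural scale $h^{1/2}$, which renders the Robin condition $h$-free ($\partial_\nu\tilde u=\tilde u$) and shrinks the boundary curvature to $O(h^{1/2})$, bootstrap elliptic regularity to $L^\infty$ at unit scale, and then feed in the weighted $L^2$ bound from \eqref{eq:dec1} over the small balls $B(x_0,2Rh^{1/2})$. The ingredients are the same -- the Agmon estimate \eqref{eq:dec1}, elliptic regularity up to the boundary, Sobolev embedding, the spectral lower bound giving $M<|hw|\leq C_0$, and $\|u_h\|_{L^2(\Omega)}=O(h^{1/4})$ from the normalization -- but the assembly is genuinely different. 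Your route is slightly sharper, producing $h^{-\mathsf n/4+1/4}$ rather than $h^{-\mathsf n/4-1/4}$, because the local rescaling tracks the semiclassical scale exactly while the global conjugation loses roughly $h^{1/2}$ per derivative in the bootstrap; the paper's route is shorter to write out, avoiding boundary straightening and the local patching/covering bookkeeping.
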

\begin{proof}
For all $\varepsilon>0$, we introduce the tubular neighborhood of the boundary,
\begin{equation}\label{eq:Om-ep}
\Omega_{\varepsilon}:=\{x\in\Omega,~d(x,\Gamma)<\varepsilon\}\,.
\end{equation}
Choose $\varepsilon_0>0$ so that the function $x\mapsto d(x,\Gamma)$ is smooth on $\Omega_{2\varepsilon_0}$. We extend this function to a smooth function $\tilde t$ on $\Omega$ as follows
\[ \tilde t(x)=\begin{cases}
d(x,\Gamma)&{\rm if}~x\in\Omega_{\varepsilon_0}\\
2\varepsilon_0&{\rm if~}x\in\Omega\setminus \Omega_{2\varepsilon_0}
\end{cases}~{\rm and}~\varepsilon_0\leq \tilde t(x)\leq 2\varepsilon_0{~\rm if~}x\in \Omega_{2\varepsilon_0}\setminus\Omega_{\varepsilon_0}\,.\]
We introduce the function $v_h(x)=u_h(x)\exp\big(\frac{\alpha \tilde t(x) }{h^{1/2}}\big)$. We select $\alpha\in(0,\sqrt{M})$ and $h_0>0$  so that, for all $h\in(0,h_0)$, \eqref{eq:dec1} holds, which in turn yields
\[\|v_h\|_{H^1(\Omega)}+h^{-1/2}\|v_h\|_{L^2(\Omega)}\leq C_1 h^{-1/2}\|u_h\|_{L^2(\Omega)}\,.\]
The function $v_h$ satisfies the non-homogeneous Neumann problem:
\[ \Delta v_h=f_h{\rm ~in~}\Omega {\rm ~and~}\frac{\partial  v_h}{\partial\nu}=g_h~{\rm on~}\Gamma\,,\]
where
\begin{equation*}
 f_h(x)=\big( \alpha h^{-1/2}\Delta\tilde t-2\alpha^2h^{-1}|\nabla\tilde t|^2-w\big)v_h+2\alpha h^{-1/2}\nabla\tilde t\cdot\nabla v_h  \,,
 \end{equation*}
 and 
 \begin{equation*}g_h(x)=(\alpha+1)h^{-1/2}v_h(x)\,.
 \end{equation*}
By the elliptic estimates for the Neumann non homogeneous problem, we get
\[\|v_h\|_{H^2(\Omega)}\leq C_2\big(\|f_h\|_{L^2(\Omega)}+\|v_h\|_{L^2(\Omega)}+\|g_h\|_{H^{1/2}(\Gamma)} \big)
\leq \tilde C_2 h^{-1}\|u_h\|_{L^2(\Omega)}\,.\] 
In the cases $n=2,3$  and by Sobolev embedding, we deduce an estimate in the H\"older norm.  For the case $n \geq 4$, we pick the smallest integer $k_*>\frac{n}2$ and we iterate the previous estimate so that
\begin{multline*}
\|v_h\|_{H^{k_*}(\Omega)}\leq C_*\big(\|f_h\|_{H^{k_*-2}(\Omega)}+\|v_h\|_{H^{k_*-2}(\Omega)}+\|g_h\|_{H^{k_*-2+1/2}(\Gamma)} \big)\\
\leq \tilde C_*h^{-k_*/2} \|u_h\|_{L^2(\Omega)}\,.
\end{multline*}
We use Sobolev embedding of $H^{k_*}(\Omega)$ in $C(\overline{\Omega})$ and that $k_*\leq \frac{n}2+1$. To finish the proof, we note that   due to our normalization of $u_{h_{/\Gamma}}$, the norm of $u_h$ in $\Omega$ satisfies   $\|u_h\|_{L^2(\Omega)}=\mathcal O(h^{1/4})$ since
\[ -Mh^{-1}\|u_h\|^2_{L^2(\Omega)}\geq -w\|u_h\|_{L^2(\Omega)}^2=\|\nabla u_h\|_{L^2(\Omega)}^2-h^{-1/2}\|u_h\|_{L^2(\Gamma)}^2\,.\]
\end{proof}

\section{Boundary pseudo-differential calculus and decay of eigenfunctions}\label{sec:PD}

\subsection{Decay in the interior}

Here we discuss  (and improve) the weaker result of \cite{HiLu} leading to the conjecture proved by \cite{GT}.  

\begin{theorem}[Hislop-Lutzer \cite{HiLu}]\label{thm:Hi-Lu}\

For any $p\in \mathbb N$, any $K\subset \Omega$ compact, there exists $C_{K,p}>0$ and $h_{K,p} >0$ such that if $(h,u_{h})$ is a solution of \eqref{eq:Robinw=0} 
\begin{equation}\label{eq:Robinw=0}
\begin{cases}
-\Delta u_h=0 &~{\rm in~}\Omega\medskip\\
\displaystyle\frac{\partial u_h}{\partial \nu} =h^{-1/2} u_h&~{\rm on~}\Gamma
\end{cases}\,,
\end{equation}
with $h\in (0,h_{K,p}]$ and $||u_h||_{L^2(\partial\Omega)}=1$,
 then it satisfies
 \begin{equation}
 |u_h (x)|\leq C_{K,p} \, h^{p/2}\,,\, \forall x \in K.
 \end{equation}
 \end{theorem}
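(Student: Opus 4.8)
The plan is to follow the boundary pseudodifferential strategy of \cite{HiLu}: the boundary trace of $u_h$ is an eigenfunction of the Dirichlet--Neumann operator $\Lambda(0)$ associated with the large eigenvalue $h^{-1/2}$, hence highly oscillatory, so its harmonic extension is uniformly small on compact subsets of $\Omega$. First, since $w=0\notin\sigma(-\Delta^D)$, the map \eqref{eq:D}--\eqref{eq:DtN} is well defined, and writing $\psi:=u_h|_{\Gamma}$ the harmonic function $u_h$ coincides with $u_{0,\psi}$; the Robin condition in \eqref{eq:Robinw=0} together with \eqref{eq:DtN} then says exactly that $\Lambda(0)\psi=h^{-1/2}\psi$, while the normalization gives $\|\psi\|_{L^2(\Gamma)}=1$ (in particular $\psi\not\equiv0$). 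Since $\Lambda(0)$ is a classical elliptic pseudodifferential operator of order $1$ on the closed manifold $\Gamma$, an elliptic bootstrap gives $\psi\in C^\infty(\Gamma)$, and, $\psi$ being an eigenfunction, $\Lambda(0)^N\psi=h^{-N/2}\psi$, i.e.
\[ \psi=h^{N/2}\,\Lambda(0)^N\psi\qquad\text{for every }N\in\mathbb N. \]

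Next I would pass to the Poisson representation. Let $P(x,y)$, $x\in\Omega$, $y\in\Gamma$, be the (real-valued) Poisson kernel of $-\Delta$ on $\Omega$, so that $u_h(x)=\int_\Gamma P(x,y)\,\psi(y)\,\dx\sigma(y)$. Fix a compact $K\subset\Omega$. Since $\dist(K,\Gamma)>0$, the kernel $y\mapsto P(x,y)$ is smooth on $\Gamma$ for each $x\in K$, and $x\mapsto P(x,\cdot)$ together with all its tangential derivatives is bounded on $K$ with values in $C^\infty(\Gamma)$ (interior regularity of the Green function). Inserting the identity above and moving $\Lambda(0)^N$ onto the kernel by symmetry of $\Lambda(0)$ yields
\[ u_h(x)=h^{N/2}\int_\Gamma\big(\Lambda(0)^N P(x,\cdot)\big)(y)\,\psi(y)\,\dx\sigma(y),\qquad x\in K. \]
As $\Lambda(0)^N$ has order $N$ it maps $H^N(\Gamma)$ boundedly into $L^2(\Gamma)$, so $C_{K,N}:=\sup_{x\in K}\|\Lambda(0)^N P(x,\cdot)\|_{L^2(\Gamma)}<\infty$, and Cauchy--Schwarz on $\Gamma$ with $\|\psi\|_{L^2(\Gamma)}=1$ gives
\[ |u_h(x)|\le h^{N/2}\,\|\Lambda(0)^N P(x,\cdot)\|_{L^2(\Gamma)}\le C_{K,N}\,h^{N/2},\qquad x\in K. \]
Choosing $N=p$ yields the claimed bound (the threshold $h_{K,p}$ plays no role, since no smallness of $h$ was used); choosing $N=2p$ would even improve it to $h^{p}$.

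I do not expect a real obstacle here: the argument rests only on two standard facts, namely that $\Lambda(0)$ is an elliptic self-adjoint $\Psi$DO of order $1$ on $\Gamma$ with the above mapping properties (cf.\ \cite{HiLu}), and that the Poisson kernel $P(x,\cdot)$ and its tangential derivatives are uniformly bounded in $C^\infty(\Gamma)$ for $x$ in a compact subset of $\Omega$ --- interior elliptic regularity of the Green function. The point that makes the clean power decay work is precisely $w=0$: it provides a globally smooth (harmonic) extension and a well-behaved operator $\Lambda(0)$. For $w\le\epsilon<\lambda_1^D(\Omega)$ the same scheme applies with the solution operator of $-\Delta-w$ in place of the harmonic extension, but then the interior regularity is only local away from $\Gamma$, which is what produces the weight $\big(d(x,\Gamma)^{-2}\big)^{p}$ in Theorem~\ref{thm:conj}.
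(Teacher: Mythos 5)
Your proof is correct and follows the same route as the paper: write $u_h$ via the Poisson representation, use that $\psi=u_h|_\Gamma$ satisfies $\Lambda(0)\psi=h^{-1/2}\psi$ so that $\psi=h^{N/2}\Lambda(0)^N\psi$, move $\Lambda(0)^N$ onto the kernel $P(x,\cdot)$ by self-adjointness, and conclude by Cauchy--Schwarz using the interior smoothness of the Poisson kernel for $x$ in the compact $K$. The paper states this only as formula \eqref{basicf} followed by ``it is then easy to get the result''; you supply exactly the missing details (self-adjointness, $H^N\to L^2$ boundedness of $\Lambda(0)^N$, uniform interior bounds on $P(x,\cdot)$), and your remarks that no smallness of $h$ is actually needed and that the bound can be sharpened to $h^p$ by taking $N=2p$ are both correct.
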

 
Note that our Thorem~\ref{thm:conj} extends the result of Theorem~\ref{thm:Hi-Lu} up to the boundary. The idea is to use the properties of the Poisson kernel of the operator $-\Delta-w$ up to the boundary, while in \cite{HiLu}, the properties of the Poisson kernel were used in the interior of the domain.

\subsection{Proof of Theorem~\ref{thm:conj} for $w=0$}\

 The proof of \cite{HiLu} ($w=0$) is based on the classical Green-Representation Formula for $u_h$ (see \cite[Ch.~2, Sec.~2.2.4]{Ev} for the basic theory)
 \begin{equation}
 u_h(x) = \int_{\partial \Omega} u_h (\cdot) P(x,\cdot)  d\sigma\,,
 \end{equation}
 where $P(x,\cdot)$ is the Poisson kernel defined as follows
 \begin{equation}\label{eq:P(x,y)}
P(x,\cdot)=- \partial_\nu G (x,\cdot)
 \end{equation}
where   the distribution $G(x,y)\in \mathcal D'(\Omega\times \Omega)$ is, given $x\in \Omega$,  the solution of the inhomogeneous  Dirichlet problem
 \begin{equation}
 -\Delta_y G (x,\cdot) =\delta_x\,,\,  G(x,y)=0 \mbox{ for  } y \in \partial \Omega\,.
 \end{equation}\Bk
 The properties of $G$ (which is called the Green function) are rather well known in the case of a smooth boundary (see Theorem 2.3 in \cite{HiLu})  but for the proof of the conjecture, we will need a more precise information for the Poisson kernel 
  for $y\in \partial \Omega$ and $x$ close to $\partial \Omega$).   This is done, at least for $w=0$ in \cite{En} (see also \cite{Kr}).
  
 The proof is  based on the connection with the Dirichlet to Neumann operator $\Lambda(w)$. Indeed, $u_{h/{\partial \Omega}}$ is  an eigenfunction of $\Lambda(w)$ 
 associated with the eigenvalue $h^{-\frac 12}$.  We can then write
 \begin{equation}\label{basicf*}
 u_h(x) =  h^{\frac p 2}\,  (P \circ  \Lambda(w)^p ) ({u_h}_{|_{\partial \Omega}})\,.
 \end{equation}
 For $w=0$, \eqref{basicf*} reads as follows,
  \begin{equation}\label{basicf}
 u_h(x) = h^{\frac p 2}\,  \int_{\partial \Omega} u_h (y) \,\cdot\, (\Lambda(0)^p (y,D_y) P(x,y)) d\sigma\,.
 \end{equation}
 For $x\in K$, it is then easy to get the result obtained in \cite{HiLu}, i.e. the  interior decay estimate of Theorem~\ref{thm:Hi-Lu}. As for the estimate of Theorem~\ref{thm:conj} up to the boundary,  we recall the estimate given by M. Englis  in \cite{En}.
 \begin{theorem}
 Let $\Omega$ be a bounded domain in $\mathbb R^{\sf n}$ with smooth boundary. Then the Poisson kernel $P(x,y)$ admits the following decomposition
 \begin{equation}
 P(x,y) = \frac{c_{\sf n} \,d(x)}{|x-y|^{\sf n}} \left[ F(y, |x-y|, \frac{x-y}{|x-y|}) + H(x,y) |x-y|^{\sf n} \log |x-y|\right]\,,
 \end{equation}
 where
 \begin{itemize}
 \item $d\in C^\infty (\bar \Omega)$, $d>0$ on $\Omega$,
 \item $d(x) = d(x,\partial \Omega)$ for $x$ near $\partial \Omega$,
 \item $c_{\sf n}= \frac{\Gamma(\mathsf n/2)}{\pi^{{\sf n}/2}}$
 \item $F \in C^\infty (\partial \Omega\times \bar R^+\times \mathbb S^{\mathsf n-1})\,,\, F(y ,0, \omega)=1$ for $y \in \partial \Omega$, $\omega \in \mathbb S^{\mathsf n-1}$,
 \item $H \in C^\infty(\bar \Omega \times \partial \Omega)$\,.
 \end{itemize}
 \end{theorem}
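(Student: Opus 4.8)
The plan is to reconstruct the Schwartz kernel of the Poisson operator $P=-\partial_{\nu_y}G$ by a parametrix construction for the Dirichlet problem localised near $\partial\Omega$, and then to read off the stated structure. The part of $P(x,y)$ coming from the region where $x$ and $y$ stay at definite distance from each other is essentially trivial: there $P$ is smooth and it already carries the factor $d(x)$. Indeed $G$ is symmetric, and by the boundary regularity of the Dirichlet problem together with Hopf's lemma one may write, off the diagonal, $G(x,y')=d(x)\,d(y')\,\widetilde G(x,y')$, where $d$ is a fixed smooth defining function of $\Omega$ equal to $d(\cdot,\partial\Omega)$ near $\Gamma$ and $\widetilde G$ is smooth on $(\overline\Omega\times\overline\Omega)\setminus\mathrm{diag}$; taking $-\partial_{\nu_{y'}}$ and restricting to $y'=y\in\partial\Omega$, where $d(y)=0$ and $\partial_\nu d(y)=-1$, gives $P(x,y)=d(x)\,\widetilde G(x,y)$. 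After a partition of unity, this far contribution is absorbed into $F$ with $H\equiv 0$, so everything reduces to an analysis for $x$ close to $y\in\partial\Omega$.

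First I would pass to boundary normal (Fermi) coordinates in a chart near a boundary point, in which $-\Delta$ becomes a second--order operator with smooth coefficients whose principal part at the boundary is the flat Laplacian and whose lower--order corrections are governed by the second fundamental form of $\Gamma$. In the osculating half--space the method of images gives the leading approximation to the Poisson kernel, namely $c_{\mathsf n}\,d(x)/|x-y|^{\mathsf n}$; this computation simultaneously produces the factor $d(x)$, fixes the constant $c_{\mathsf n}=2/|\mathbb S^{\mathsf n-1}|=\Gamma(\mathsf n/2)/\pi^{\mathsf n/2}$, and yields the normalisation $F(y,0,\omega)=1$. One then solves the usual hierarchy of transport equations: each successive term is homogeneous of one degree higher in $x-y$, is obtained by inverting the flat Laplacian against the error generated by the curvature corrections, and is chosen to vanish on $\partial\Omega$. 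Summing these corrections asymptotically (Borel's lemma), dividing out $c_{\mathsf n}d(x)/|x-y|^{\mathsf n}$ and writing $x=y+r\omega$ with $r=|x-y|$, $\omega=(x-y)/|x-y|$, produces the amplitude $F(y,r,\omega)$, smooth up to $r=0$ with $F(y,0,\omega)=1$, together with a logarithmic remainder.

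The logarithm is forced by a resonance in this hierarchy: at the order dictated by homogeneity matching --- which, after dividing out the leading factor, is the relative order $|x-y|^{\mathsf n}$ --- the relevant transport equation has a source of critical homogeneity that cannot be inverted by a function homogeneous without logarithms, so exactly one factor $\log|x-y|$ with a smooth coefficient must be introduced; collecting these contributions yields the term $H(x,y)\,|x-y|^{\mathsf n}\log|x-y|$ with $H\in C^\infty(\overline\Omega\times\partial\Omega)$, and one checks that no higher powers of the logarithm occur at any order (this last point, special to the operator $-\Delta$, is one of the delicate parts). Finally the local parametrices are glued by a partition of unity subordinate to a covering of a neighbourhood of $\Gamma$ by coordinate charts; the resulting global error is smooth, and correcting it by solving one exact Dirichlet problem with smooth right--hand side gives a remainder that vanishes on $\partial\Omega$ --- hence carries the factor $d(x)$ by the argument of the first step --- and is smooth across the boundary diagonal, so it is folded into $F$ and $H$.

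The hardest part will be the bookkeeping rather than any single estimate. One has to verify that the asymptotic amplitude genuinely extends smoothly up to $r=0$ with value exactly $1$ and depends smoothly on $(y,\omega)\in\partial\Omega\times\mathbb S^{\mathsf n-1}$, which requires translating between the non--invariant Fermi coordinates and the invariant quantities $|x-y|$ and $(x-y)/|x-y|$; one must pin down precisely the order at which the logarithm enters and confirm that a single logarithmic power suffices; and one must keep the constant $c_{\mathsf n}$ exact through the image computation and the coordinate changes. A more microlocal route is to observe that $P$ is a Poisson operator in the Boutet de Monvel calculus, hence has a polyhomogeneous conormal kernel of a definite order, and to extract the first terms of its expansion from the symbol; either way, the complete argument with all these checks is carried out in Englis \cite{En} (see also \cite{Kr}).
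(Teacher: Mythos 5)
The paper does not prove this theorem: it is recalled directly from Englis \cite{En}, and the paper only uses the resulting bounds on $\partial_y^\alpha P(x,y)$ as an imported tool in the proof of Theorem~\ref{thm:conj}. There is therefore no in-paper proof to compare against.

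That said, your sketch is a plausible and essentially correct reconstruction of the cited argument. The far-field step is sound: $G(x,y')=d(x)\,d(y')\,\widetilde G(x,y')$ off the diagonal follows from boundary smoothness and Taylor's theorem (Hopf's lemma is only needed for the nonvanishing of $\widetilde G$, not for the factorisation itself), and taking $-\partial_{\nu_{y'}}$ at $y'=y\in\partial\Omega$ with $d(y)=0$, $\partial_\nu d=-1$ yields $P(x,y)=d(x)\,\widetilde G(x,y)$, which matches the claimed shape away from the diagonal. The near-diagonal scheme --- Fermi coordinates, the osculating half-space image kernel $c_{\mathsf n}\,d(x)/|x-y|^{\mathsf n}$ pinning down $c_{\mathsf n}=2/|\mathbb S^{\mathsf n-1}|=\Gamma(\mathsf n/2)/\pi^{\mathsf n/2}$ and the normalisation $F(y,0,\omega)=1$, and the transport hierarchy producing a single logarithmic resonance at relative order $|x-y|^{\mathsf n}$ --- is the standard route, and you correctly flag the genuinely delicate points: smoothness of the amplitude in the invariant variables $(y,r,\omega)$ up to $r=0$, and ruling out higher powers of $\log$. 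You also rightly note the Boutet de Monvel alternative. Since you explicitly defer the complete bookkeeping to Englis \cite{En} and Krantz \cite{Kr}, the proposal should be read as an accurate guide to the cited proof rather than as a self-contained one, which is the appropriate scope here.
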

 This   implies  in particular the weak version mentioned by Krantz \cite{Kr} which reads, for $ \mathsf n\geq 2$, 
 \begin{equation}\label{Kr1}
 |\partial_y^\alpha P(x,y)| \leq C_\alpha\, d(x) |x-y|^{-\mathsf n-|\alpha|}\,,\, \forall y \in \partial \Omega, x\in \Omega\,.
  \end{equation}
  This last estimate directly implies
  \begin{equation}\label{Kr2}
 |\partial_y^\alpha P(x,y)| \leq C_\alpha \, d(x)^{1-\mathsf n-|\alpha|}\,,\, \forall y \in \partial \Omega, x\in \Omega\,.
  \end{equation}
   Coming back to \eqref{basicf}, we can write for $p$ even (if we do not want to use the complete Boutet de Monvel  calculus)
    \begin{multline}\label{basicf1}
 u_h(x)=  h^{\frac p 2}\,  \int_{\partial \Omega} u_h (y) \,\cdot\, (\Lambda(0)^p\cdot (-\Delta_y)^{-p/2}) \,  ((-\Delta_y)^{p/2})P(x,y)d\sigma\\
=  h^{\frac p 2}\,  \int_{\partial \Omega} (\Lambda(0)^p\cdot (-\Delta_y)^{-p/2}) u_h (y) \,\cdot\,  ((-\Delta_y)^{p/2})P(x,y) d\sigma\,.
 \end{multline}
 We now observe that $ (\Lambda(0)^p\cdot (-\Delta_y)^{-p/2})$ is a boundary pseudodifferential operator of degree $0$ (with constant principal symbol) and using \eqref{Kr2} we obtain, for any $p\geq 1$, 
 \begin{equation}\label{HLimproved}
 | u_h(x)| \leq  C_p \,  h^{\frac p 2} d(x)^{1-\mathsf n- p}\,.
  \end{equation}
  This proves Theorem~\ref{thm:conj} for $w=0$.

\subsection{Proof of Theorem~\ref{thm:conj} for $w\in [-\pi^2,\lambda_1^D(\Omega))$}\

Now assume that $-\pi^2\leq w<\lambda_1^D(\Omega)$ and $w\not=0$.  The proof is similar to the case $w=0$ but we should replace the Green function $G$ by $G_w$ and the ND operator $\Lambda(0)$ by $\Lambda(w)$. There is no problem of definition if $w$ is not an eigenvalue of the Dirichlet Laplacian.  To avoid to analyze if the proof written for $w=0$ goes on, we can use a weaker theorem which holds for general  potential operators (or Poisson like operators). See  \cite[Thm.~8, p.~18]{En} with $n\geq 2$, $d=p$ and observe that  $|x-\zeta| ^{-1}\leq d(x)^{-1}$.
The aforementioned  result of \cite{En}  reads as follows:

  \begin{theorem}[Englis \cite{En}]\label{thm:Englis2}
  If $K$ is a potential operator in $ \mathcal K^d(\overline{\Omega})$, where $\Omega \subset \mathbb R^{\mathsf n}$ is a bounded domain with smooth boundary (or $\Omega =\mathbb R_+^{\mathsf n}$), then the Schwartz kernel $k_K$ satisfies, if $d\in \mathbb Z, d>1-{\mathsf n}$
  \begin{equation}
  k_K (x,y)=  |x-y|^{1-{\mathsf n}-d}  F(y, |x-y|, \frac{x-y}{|x-y|}) + H(x,y)  \log |x-y|\,,
  \end{equation}
  where $F$ and $H$ have the same property as in the previous theorem.
  \end{theorem}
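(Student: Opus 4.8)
\emph{Proof proposal.} The assertion is local near the boundary diagonal $\{x=y\in\partial\Omega\}$, off which $k_K$ is $C^\infty$ since a potential operator is smoothing in the interior; so after a partition of unity in the boundary variable together with a cutoff placing $x$ near the relevant boundary patch, we may flatten the patch and reduce to the model half-space $\R^{\mathsf n}_+=\{(x',x_n):x_n>0\}$, the kernel of the pieces supported away from the diagonal being $C^\infty$ in $(x,y)$ and hence absorbed into the $F$-term as in the third step below. On $\R^{\mathsf n}_+$ the operator $K$ is given by a potential (Poisson) symbol $k(x',x_n,\xi')$ of degree $d$ in the Boutet de Monvel class, and we use two structural facts: (i) $k$ has an asymptotic expansion $k\sim\sum_{j\ge0}k_{d-j}$ into twisted-homogeneous terms, $k_{d-j}(x',\lambda^{-1}x_n,\lambda\xi')=\lambda^{d-j}k_{d-j}(x',x_n,\xi')$ for $\lambda\ge1$, $|\xi'|\ge1$, each smooth in $(x',\xi')$ off $\xi'=0$; (ii) $k$ and its derivatives decay faster than any power of $x_n|\xi'|$ as $x_n|\xi'|\to\infty$, with the remainder after $N$ terms of order $d-N$. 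Writing $y=(y',0)$, the Schwartz kernel is $k_K(x,y)=(2\pi)^{1-\mathsf n}\int e^{i(x'-y')\cdot\xi'}k(x',x_n,\xi')\,d\xi'$.

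First I would treat each homogeneous term separately. Cutting $k_{d-j}$ off near $\xi'=0$ (the low-frequency part having smooth kernel), I substitute $\xi'=r^{-1}\eta'$ with $r=|x-y|=(|x'-y'|^2+x_n^2)^{1/2}$ and $\omega=(x-y)/r\in\mathbb S^{\mathsf n-1}$; thanks to (i) the powers of $r$ factor out, and thanks to (ii) the rescaled $\eta'$-integral is absolutely convergent and may be differentiated in $(x',r,\omega)$ up to $r=0$. This gives
\[
k_{K_j}(x,y)=r^{\,1-\mathsf n-(d-j)}\,G_j(y',r,\omega)\ +\ \big[\text{logarithmic contribution if }1-\mathsf n-(d-j)\in\Z_{\ge0}\big],
\]
with $G_j$ smooth on $\partial\Omega\times\overline{\R}^+\times\mathbb S^{\mathsf n-1}$; at the resonant orders, where $1-\mathsf n-(d-j)$ is a nonnegative integer, the inverse Fourier transform of the homogeneous model symbol is of the type $r^{1-\mathsf n-(d-j)}\log r$ times a smooth profile, exactly as for homogeneous distributions of integer degree, and these are the only logarithms that occur.

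Then I would reassemble. The finitely many non-resonant terms with $1-\mathsf n-(d-j)<0$ (i.e. $j\le\mathsf n+d-2$) combine by homogeneity in $r$ into $r^{1-\mathsf n-d}\big(G_0+rG_1+\cdots\big)=:r^{1-\mathsf n-d}F_{\mathrm{sing}}(y',r,\omega)$, smooth up to $r=0$, with $F_{\mathrm{sing}}(y',0,\omega)$ determined by the principal symbol $k_d$. The terms with nonnegative integer exponent and the remainder $R_N$ (for $N$ large, $C^{N-\mathsf n-d}$ near the diagonal, hence smooth in $(x,y)$ in the limit after Borel summation over $N$) are absorbed into the $F$-term: a smooth function of $(x,y)$, viewed through the smooth map $(y,r,\omega)\mapsto(y+r\omega,y)$ and multiplied by $r^{\mathsf n+d-1}$ — a smooth function of $r\in\overline{\R}^+$ with $\mathsf n+d-1\ge1$ because $d>1-\mathsf n$ — is precisely of the form $r^{1-\mathsf n-d}F$. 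The resonant logarithmic contributions are collected, again via Borel summation, into $H(x,y)\log|x-y|$.

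The hard part is exactly this last collection step: showing that every logarithmic contribution organizes into a single term $H(x,y)\log|x-y|$ with $H\in C^\infty(\overline\Omega\times\partial\Omega)$, i.e. jointly smooth in $(x,y)$ including up to $x\in\partial\Omega$. A priori each resonant piece is smooth only in the blown-up variables $(y',r,\omega)$, and $r^{\ell}Y(\omega)\log r$ need not descend to a smooth function of $(x,y)$; one must verify that the coefficient of the logarithm at each resonant order is precisely the obstruction to prolonging the non-logarithmic expansion, and that this obstruction has the special polynomial-in-$(x-y)$ structure that does descend. The companion point — uniform control of $F$ and $H$ as $x_n\to0$ — is handled by the rapid decay (ii). The model case $\Omega=\R^{\mathsf n}_+$ reduces to the explicit rescaling above; the general smooth domain follows from the localization and the coordinate invariance of the Boutet de Monvel calculus.
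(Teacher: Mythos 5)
There is nothing in the paper to compare your argument with: Theorem~\ref{thm:Englis2} is imported as a black box from Englis \cite{En} (Theorem~8, p.~18 there), and the authors give no proof, using it only to bound the Poisson-type kernels uniformly in $w$. Your sketch does follow the natural route for such a result — localize near the boundary diagonal, expand the potential symbol in the Boutet de Monvel class into twisted-homogeneous terms, invert the tangential Fourier transform in the blown-up variables $(y,r,\omega)$ with $r=|x-y|$, and attribute the logarithms to the integer resonances $1-\mathsf n-(d-j)\in\Z_{\geq 0}$ — so the strategy is sound in outline.

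However, as written the argument has a genuine gap, and you flag it yourself: the substance of the theorem, beyond the routine rescaling computation, is precisely that (a) the non-logarithmic part assembles into $r^{1-\mathsf n-d}F$ with $F$ smooth on $\partial\Omega\times\overline{\R}^+\times\mathbb S^{\mathsf n-1}$ up to $r=0$, and (b) the logarithmic part assembles into a single $H(x,y)\log|x-y|$ with $H$ jointly smooth on $\overline\Omega\times\partial\Omega$. Your resonant pieces come out a priori as sums $r^{\ell}c_{\ell}(y',\omega)\log r$, and such expressions do not in general descend to $H(x,y)\log|x-y|$ with smooth $H$; asserting that the coefficients ``have the special polynomial-in-$(x-y)$ structure that does descend'' is exactly the claim that needs proof (one must show $c_\ell(y',\omega)=\sum_{|\alpha|=\ell}h_\alpha(y')\,\omega^\alpha$ with $h_\alpha$ smooth, and control all resonant orders simultaneously). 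Likewise, invoking ``Borel summation over $N$'' does not by itself yield (a): what is required is that the remainder after $N$ symbol terms has a kernel of class $C^{M(N)}$ up to the boundary with $M(N)\to\infty$, uniformly after the blow-up, and that the resulting asymptotic series in $r$ is realized by one smooth $F$ whose discrepancy is absorbed without creating new logarithms. These two points are the actual content of Englis's theorem, so your proposal should either carry them out in detail or do what the paper does and cite \cite[Thm.~8]{En} directly.
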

  
In our application, we use that the Poisson operator (associated with $(-\Delta -w)$) is a potential operator $P(w)$  if $w$ is not an eigenvalue of the Dirichlet Laplacian. We also use 
   the property that the Dirichlet to Neumann operator $\Lambda(w)$ is a boundary pseudo-differential operator of degree $1$ with elliptic principal symbol.

We apply Theorem~\ref{thm:Englis2} to $K=  (P \circ  \Lambda(w)^p)$ and  use  \eqref{basicf}.
Everything depends continuously of $w$ in the interval $I:=[-\pi^2,\lambda^D_1(\Omega))$ and the control is uniform in any compact interval in $I$.
This is clear for the computation (symbolic calculus)  of an approximate Poisson operator $ P^{app}(w) $  modulo regularizing operators $R^{reg} (w)$ and $r^{reg}(w)$  without additional  assumptions.
One gets
$$
(-\Delta - w ) P^{app}(w)  = R^{reg} (w) \,,\, \gamma \circ P^{app} (w) = I + r ^{reg}(w)\,.
$$
For eliminating the remainder, we use the resolvent and this is there  that the assumption that $w$ is not in the spectrum of  the Dirichlet Laplacian is used. 
More precisely, we first eliminate $r(w)$ by using simply an extension operator $\epsilon$ from $C^\infty (\partial \Omega)$ into $C^\infty (\overline {\Omega})$.
We note that $\epsilon \circ r^{reg} (w)$ is regularizing.\\
Then, we compute
$$
(-\Delta -w) ( P^{app}(w) - \, \epsilon\, \circ\, r^{reg}(w))= R^{reg} (w) - (-\Delta -w) \epsilon \circ r^{reg}(w)) := \hat R^{reg} (w)\,.
$$
Finally, we get for the Poisson kernel
$$
P (w) = P^{app}(w) -\epsilon\, \circ \, r^{reg} (w) - (-\Delta-w)^{-1} \hat R^{reg} (w)\,.
$$
At this stage we  get \eqref{HLimproved} from \eqref{basicf*} in the case where $w\not=0$ is fixed in the interval $[-\pi^2,\lambda_1^D(\Omega))$,  the estimate being uniform in $w$ for any compact subinterval.  We have the same result for any compact interval in the resolvent set of the Dirichlet Laplacian in $\Omega$. The choice of $-\pi^2$ is only  motivated by the next step.
 
\subsection{Proof of Theorem~\ref{thm:conj} for $w<-\pi^2$} 
 
The problem here is that we loose  in the previous approach the control of the  uniformity with respect to $w$ in the estimates of the Poisson kernel $P(w)$. Actually, since $h^{-1}w\in\sigma(\mathcal T_h)$,  $w=w(h)$ may approach $-\infty$ in the semi-classical limit, by Theorem~\ref{thm:main}. 

Pick the unique integer $ k\geq 1$  such that 
\[
k\pi\leq \sqrt{-w}<(k+1)\pi
\]
and set
\[
a=\frac{k\pi}{\sqrt{-w}}\,.
\]
Then, 
\begin{equation}\label{eq:w-a-n}
w+\frac{k^2\pi^2}{a^2}=0,~k\in\mathbb N,~\frac12\leq a\leq 1\,.
\end{equation}
We introduce the  a weighted Laplace operator $-\Delta_{\hat\Omega,a}$ in the cylinder  $\hat\Omega:=\Omega\times \mathbb T^1$, where $\mathbb T^1=[0,1)$ is the 1d torus. That is
\begin{equation}\label{eq:D-strip}
\Delta_{\hat\Omega,a}=\sum_{i=1}^2\frac{\partial^2}{\partial x_i^2}+ \frac1{a^2} \frac{\partial^2}{\partial\theta^2}
\end{equation}
where $(x_1,x_2)$ denote the coordinates in $\Omega$    and $\theta$ denotes the coordinate in $\mathbb T^1=[0,1)$; these  coordinates represent a point $\hat x=(x,\theta)$ of $\hat\Omega$.
We introduce the following function
\begin{equation}\label{eq:D-strip-ef}
v_h(\hat x)=e^{ik\pi\theta s}u_h(x)~\big(\hat x=(x,\theta)\big)
\end{equation}
which satisfies
\begin{equation}\label{eq:vh-strip}
-\Delta_{\hat\Omega,a} v_h=0~{\rm on~}\hat\Omega,~\nu_{\hat\Gamma}\cdot\nabla_{\hat x} v_h=h^{-1/2} v_h~{\rm on~}\hat\Gamma\,.
\end{equation}
Here   $\hat\Gamma=(\partial\Omega)\times \mathbb T^1$ is the  boundary of $\hat\Omega$, and $\nu_{\hat\Gamma}$ its unit outward normal vector; we have $\nu_{\hat\Gamma}=(\nu^1,\nu^2,0)$ where $\nu=(\nu^1,\nu^2)$ is the outward unit normal vector  of $\Gamma=\partial\Omega$. . 

We can introduce the DN operator of $\hat\Omega$, $\Lambda_{\hat\Omega,a}(0)$, defined on $H^{1/2}(\hat\Gamma)$ as in \eqref{eq:DtN} (with $\hat\Omega,\hat\Gamma$ replacing $\Omega,\Gamma$ and $-\Delta_{\hat\Omega,a}$ replacing $-\Delta$). Consequently, the function $v_h$ in \eqref{eq:vh-strip} is an eigenfunction of $\Lambda_{\hat\Omega,a}(0)$ with eigenvalue $h^{-1/2}$.
We will use  the Poisson kernel $P_{\hat\Omega,a}$ corresponding to $-\Delta_{\hat\Omega,a}$. 
Using Theorem~\ref{thm:Englis2} for the domain $\hat\Omega$ and the operator $-\Delta_{\hat\Omega,a}$,  we get the following  Poisson kernel estimates (as in \eqref{Kr1}--\eqref{Kr2}) 
\begin{equation}\label{eq:Kr-torus2} 
 |\partial^\alpha_{\hat x} P_{\hat\Omega,a}(\hat x, \hat y)|\leq C_\alpha d(\hat x ,\hat\Gamma)^{-\mathsf n-|\alpha|}\quad(\hat x\in\hat\Omega,\hat y\in\hat\Gamma)
\end{equation}
where the constant $C_\alpha$ can be chosen independently of $a\in[\frac12,1]$.

With \eqref{eq:vh-strip} in hand, we write, for $\hat x=(x,\theta)\in\hat\Omega$,
\begin{align*}
 v_h(\hat x)&=\int_{\hat\Gamma}v_h(\hat y)P_{\hat\Omega,a}(\hat x,\hat y)d\sigma_{\hat\Gamma}(\hat y)\\
 &=h^{p/2}\int_{\hat\Gamma} \Lambda_{\hat\Omega,a}(0)^p v_h(\hat y)P_{\hat\Omega,a}(\hat x,\hat y)d\sigma_{\hat\Gamma}(\hat y)\,.
\end{align*}
 Using the Poisson kernel estimate in \eqref{eq:Kr-torus2} and the pseudodifferential calculus as in \eqref{basicf1}, we get, for any positive even integer $p$, any $a\in [\frac 12,1]$, the existence of $C_p$ and $h_p>0$ such that, for $h\in (0,h_p]$,
\[|v_h(\hat x)|\leq C_p \,  h^{p/2} d(\hat x,\hat\Gamma)^{-\mathsf n-p}\,. \]
Since $|v_h(\hat x)|=|u_h(x)|$ by  \eqref{eq:D-strip-ef} and $d(\hat x,\hat\Gamma)=d(x,\partial\Omega)=d(x)$ for $\hat x=(x,\theta)\in\hat\Omega$, this implies 
  \begin{equation}\label{eq:D-strip-gr}
 |u_h(x)|\leq C_p \,  h^{p/2} d(x)^{-\mathsf n-p}\,,\,\forall x\in \Omega\,,
 \end{equation}
 as stated in Theorem \ref{thm:conj} for $w\leq - \pi^2$.

\subsection{Analytic case}\label{rem:exp-dec}
 We now consider the  case when $\partial\Omega$ is analytic and handle the case where $w<\lambda_1^D(\Omega)$. 
\subsubsection{Using analytic pseudodifferential calculus}
At a  heuristic level, one could  hope from the Boutet de Monvel  analytic pseudodifferential calculus \cite{BM1} that we will get an estimate in the form
 \begin{equation}\label{HLimproveda}
 | u_h(x)| \leq  C ^{p+1} \, p!  \, h^{\frac p 2} d(x)^{1-n- p}\,.
  \end{equation}
   A first step could be the following (to our knowledge unproved)  result:
  If  $A$ is an analytic pseudo-differential operator on  $\partial \Omega$ (or more generally a compact analytic manifold) of degree  $1$ 
   and $u$ is an analytic function on $\partial \Omega$, then  $A^p u$ satisfies
   $$
   |(A^p u) (y) | \leq C^{p+1} p!\,.
   $$
   This kind of estimate (with additional control with respect to the distance of $x$ to $\partial \Omega$)  should be applied to the distribution kernel of the Poisson operator of $-\Delta-w$. 
   
  Assuming that the estimate  \eqref{HLimproveda} is true we can try to optimize over $p$.  Using Stirling Formula, we get the simpler 
   \begin{equation}\label{HLimprovedb}
 | u_h(x)| \leq  C ^{p+1} \, p^{p+1} \, h^{\frac p 2} d(x)^{1-n- p}\,.
  \end{equation}
  Optimizing over $p$ will give an estimate of the form \eqref{HLimprovedc}.

  It seems difficult by this approach to have the optimal result of Galkowski-Toth \cite{GT}, i.e. to have a control   of the constant $C_1$ appearing in \eqref{HLimprovedc}.
  
We also refer the reader to an interesting discussion at the end of \cite{En} (Subsection 7.4) and to \cite{PST}.

  \subsubsection{Using Galkowski-Toth.}

 In this section, we prove Theorem~\ref{thm:conj-GT}.  \Bk  To keep tracking the uniformity with respect to $w$  of the estimates, we introduce a fixed positive constant $0<\zeta <\lambda_1^D(\Omega)$ and assume that $w$ varies as follows, $-\infty<w\leq \epsilon$.

 We recall Theorem~1 of Galkowski-Toth \cite{GT}:

\begin{theorem}[\cite{GT}]\label{thm:GT}
For all $\delta>0$ and $\alpha \in \mathbb N^n$, there exist  $\varepsilon >0$,  $C$  and $h_0$  such that, for $h\in (0,h_0]$,  any solution $u_h$ of
\begin{equation}\label{eq:GT0}
\begin{cases}
-\Delta u_h=0 &~{\rm in~}\Omega\medskip\\
\displaystyle\frac{\partial u_h}{\partial \nu} =h^{-1/2} u_h&~{\rm on~}\Gamma\medskip\\
\|u_h\|_{L^2(\partial\Omega)}=1
\end{cases}\,,
\end{equation}
satisfies the following estimate in $\{d(x,\Gamma)<\varepsilon \}$,
\begin{equation} \label{eq:GT0a}
|\partial_x^\alpha u_h(x)|\leq C \, h^{-\frac{n}4+\frac18-\frac{|\alpha|}2}\exp\left(-\frac{d(x,\Gamma)\big(1+(C_{\Omega}-\delta)d(x,\Gamma) \big) }{h^{1/2}}\right)\,. 
\end{equation}
 Here $C_{\Omega}=-\frac32+\inf_{(x',\xi')\in S^*\Gamma}Q(x',\xi')$, $Q$ is the symbol of the second fundamental  form of the boundary $\Gamma$.
\end{theorem}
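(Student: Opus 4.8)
The plan is to regard $u_h$ near $\Gamma$ as the harmonic extension of its boundary trace $f:=u_h|_\Gamma$ and to construct that extension by an \emph{analytic} Poisson operator, so that the theorem reduces to a complex-phase WKB computation; throughout I set $\hbar=h^{1/2}$ and let $\hbar\to 0_+$. First I would pass to analytic Fermi coordinates $(x',t)$ on a collar $\{d(x,\Gamma)<\varepsilon_1\}$, in which the metric is $dt^2+g_{ij}(x',t)\,dx'^idx'^j$ with $g(\cdot,0)=g_\Gamma$ and $\partial_t g_{ij}|_{t=0}$ equal to (twice) the second fundamental form of $\Gamma$; these coordinates are analytic because $\Gamma$ is, so $-\Delta$ becomes $-\partial_t^2-H(x',t)\partial_t+\Delta_{g(x',t)}$ with analytic coefficients, $H$ being the mean curvature of the level set $\{d=t\}$, and the boundary condition becomes $-\hbar\,\partial_t u_h|_{t=0}=u_h|_{t=0}$. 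Since the estimate \eqref{eq:GT0a} is only claimed on $\{0<t<\varepsilon\}$, it suffices to work in this collar with $\varepsilon\le\varepsilon_1$ small.

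Next, expressing the Robin condition through the Dirichlet-to-Neumann map, the trace $f$ satisfies $\Lambda(0)f=\hbar^{-1}f$, and (as recalled in the excerpt) $\Lambda(0)$ is an analytic classical pseudodifferential operator of order $1$ on $\Gamma$ with elliptic principal symbol $|\xi'|_{g_\Gamma}$. Hence $\hbar\Lambda(0)-1$ is semiclassically elliptic off the cosphere bundle $\Sigma=\{|\xi'|_{g_\Gamma}=1\}\subset T^*\Gamma$, and analytic microlocal elliptic regularity forces the FBI transform of $f$ to decay like $\exp(-c/\hbar)$ away from $\Sigma$. Combined with $\|f\|_{L^2(\Gamma)}=1$ and the $o(k^{-\infty})$ clustering of the Steklov spectrum (cf.\ \eqref{asRoz}), this gives an improved sup-norm control of $f$, which after the stationary-phase step below produces the prefactor $h^{-n/4+1/8}$ in \eqref{eq:GT0a}. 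Since $0<\lambda_1^D(\Omega)$, the Dirichlet problem with data $f$ has a unique solution, which by uniqueness is $u_h$ again; so it is legitimate to reconstruct $u_h$ on the collar from $f$.

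The heart of the argument is the construction of the analytic Poisson operator. Over $t>0$ the semiclassical symbol $\tau^2+r(x',t,\xi')$, with $r(x',0,\xi')=|\xi'|^2_{g_\Gamma}$, has no real zeros, so the Dirichlet extension of $f$ is an evanescent Fourier integral operator with complex phase: one solves the eikonal equation $(\partial_t\phi)^2+r(x',t,\nabla_{x'}\phi)=0$, $\phi(x',0,\xi')=\langle x',\xi'\rangle$, choosing the branch with $\mathrm{Im}\,\phi>0$ for $t>0$, and the transport equations for amplitudes $b(x',t,\xi';\hbar)$, obtaining in the analytic category symbols with Gevrey-$1$ (factorial) bounds so that
\[
u_h(x',t)=\frac{1}{(2\pi\hbar)^{n-1}}\iint e^{i(\phi(x',t,\xi')-\langle y',\xi'\rangle)/\hbar}\,b(x',t,\xi';\hbar)\,f(y')\,dy'\,d\xi'+O\!\big(e^{-c/\hbar}\big).
\]
This is precisely the analytic Boutet de Monvel calculus (\cite{BM1}; see also \cite{En}), and carrying it through with an exponentially small, rather than merely $O(\hbar^\infty)$, remainder is the step I expect to be the \textbf{main obstacle}.

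Finally, inserting the microlocalization of $f$ to $\Sigma$ and applying complex stationary phase, the $\xi'$-integral concentrates at $|\xi'|_{g_\Gamma}=1$, giving $|u_h(x',t)|\lesssim h^{-n/4+1/8}\exp\big(-\min_{|\xi'|_{g_\Gamma}=1}\mathrm{Im}\,\phi(x',t,\xi')/\hbar\big)$. A Taylor expansion of $\mathrm{Im}\,\phi$ in $t$ has linear term $t|\xi'|_{g_\Gamma}$ and a $t^2$-term whose coefficient is fixed by the eikonal equation together with the subprincipal contribution of the $H\partial_t$ term; on $\Sigma$ this coefficient works out to $-\tfrac32+Q(x',\xi')$, with $Q$ the symbol of the second fundamental form, so that $\mathrm{Im}\,\phi=t|\xi'|_{g_\Gamma}+t^2\big(-\tfrac32+Q(x',\xi')\big)+O(t^3)$. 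Truncating the expansion at the cost of an error $\delta t^2$ and minimizing over $S^*\Gamma$ yields exactly the exponent $d(x,\Gamma)\big(1+(C_\Omega-\delta)d(x,\Gamma)\big)/\hbar$ with $C_\Omega=-\tfrac32+\inf_{S^*\Gamma}Q$; differentiating the integral representation brings down $\hbar^{-1}\nabla_x\phi=O(\hbar^{-1})$ per derivative, hence the factor $h^{-|\alpha|/2}$, completing \eqref{eq:GT0a}. Besides the analytic Poisson construction, the delicate points are pinning down the exact coefficient $-\tfrac32+Q$ and the sharp prefactor exponent $-n/4+1/8$.
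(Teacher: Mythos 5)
The statement you are proving is not proved in this paper at all: Theorem~\ref{thm:GT} is imported verbatim from Galkowski--Toth \cite{GT}, and the paper only \emph{uses} it (together with the observation that the constants can be tracked uniformly in $w$ and in the coefficient $\mathbf c$) to deduce Theorem~\ref{thm:conj-GT}. So the relevant comparison is with the proof in \cite{GT}, and your outline does follow the same analytic-microlocal route as that work: microlocalization of the boundary trace to the cosphere bundle $\Sigma=\{|\xi'|_{g_\Gamma}=1\}$ via analytic (FBI/Sj\"ostrand-type) elliptic regularity for $h^{1/2}\Lambda(0)-1$, reconstruction of $u_h$ in a collar by an evanescent complex-phase Poisson parametrix, and complex stationary phase with a Taylor expansion of $\mathrm{Im}\,\phi$ in the normal variable producing the curvature correction.

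However, as a proof it has genuine gaps, and you have named the main one yourself: the construction of the analytic Poisson operator with an $O(e^{-c/h^{1/2}})$ (rather than $O(h^{\infty})$) remainder is precisely the hard content of \cite{GT} (building on Boutet de Monvel's analytic calculus \cite{BM1}), and without it the exponential bound \eqref{eq:GT0a} does not follow; the paper itself stresses this distinction when it remarks that in the $C^\infty$ category the same strategy only yields information modulo $\mathcal O(h^\infty)$. Two further steps are asserted rather than established. First, the prefactor $h^{-\frac n4+\frac18}$ cannot be obtained from the clustering asymptotics \eqref{asRoz}, which is a two-dimensional statement about the Steklov spectrum of a curve, whereas Theorem~\ref{thm:GT} is $n$-dimensional; one needs a genuine pointwise (spectral-cluster type) bound for the boundary trace, uniform over the relevant eigenvalue window, and your sketch does not supply it. Second, the identification of the $t^2$-coefficient of $\mathrm{Im}\,\phi$ on $\Sigma$ as $-\tfrac32+Q(x',\xi')$ is exactly the computation that fixes $C_\Omega$, and it is only claimed (``works out to''), not derived: the eikonal equation alone gives the $\partial_t r$ contribution through the second fundamental form, while the $-\tfrac32$ comes from a separate, nontrivial bookkeeping of the subprincipal/transport terms (the $H\partial_t$ term and the amplitude). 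So the proposal is a faithful outline of the cited proof's architecture, but the decisive analytic estimates are left as acknowledged obstacles, and one ingredient (the use of \eqref{asRoz}) would fail as stated outside dimension two.
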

It results from Theorem~\ref{thm:GT} the following weaker estimate. There exist constants $\varepsilon, C,\hat C$ such that, for $d(x,\Gamma)<\varepsilon$, we have 
\begin{equation} \label{eq:GT0a*}
|\partial_x^\alpha u_h(x)|\leq C \, h^{-\frac{n}4+\frac18-\frac{|\alpha|}2}\exp\left(-\frac{d(x,\Gamma)\big(1- \hat C  d(x,\Gamma) \big) }{h^{1/2}}\right)\,. 
\end{equation}
Looking at the  proof, Theorem~\ref{thm:GT} can be generalized in two different ways:
\begin{itemize}
\item When replacing $-\Delta$ by $-\Delta-w$,  the constants in the estimates can be controlled uniformly with respect to $w$ in any compact interval of $(-\infty,\lambda_1^D(\Omega))$.
\item When  replacing $-\Delta$ by ${\rm div}(\mathbf c\nabla)$ with $\mathbf c\in\R^n$ a constant vector with positive components,  the constants in the estimates can also be controlled uniformly with respect to $|\mathbf c|$ when it varies in a compact interval in $(0,+\infty)$.
\end{itemize} 
In the two aforementioned  situations,  \eqref{eq:GT0a} continues to hold, which also yields that,  for all $\eta>0$, there exist positive constants  $\varepsilon ,C, h_0$  such that, for $h\in (0,h_0]$,  any solution $u_h$ of \eqref{eq:GT0} 
satisfies the following estimate in $\{d(x,\Gamma)<\varepsilon \}$,
\begin{equation}\label{eq:GT-gen}
  |u_h(x)|\leq C \, h^{-\frac{n}4+\frac18}\exp\left(- (1-\eta) \frac{d(x,\Gamma)}{h^{1/2}}\right)\,.
\end{equation}
Note that we just keep \eqref{eq:GT0a*} which is the weaker version of \eqref{eq:GT0a} for simplification. In the procedure of addition of one variable described  below, we can not keep the additional information related to the curvature of  $\Gamma$, but we can always write the following estimate (which also leads to \eqref{eq:GT-gen}):

\textit{There exist positive constants  $C,\hat C,  h_0$  such that, for all $h\in (0,h_0]$,   
\begin{equation}\label{eq:GT-gen*}
  |u_h(x)| \leq C \, h^{-\frac{n}4+\frac18} \exp\left(- \frac{ d_{\hat C} (x)   }{h^{1/2}}\right) \,,
\end{equation}
with 
\begin{multline*}
d_{\hat C} (x) = (d(x,\Gamma) -\hat C d(x,\Gamma)^2) 1_{ \{y, \, d(y,\Gamma) < \frac{1}{2\hat C}\}} (x)\\
 + \frac{1}{2 \hat C}\left(1-  1_{ \{y,\,  d(y,\Gamma) < \frac{1}{2\hat C}\}} (x)\right) \,.
\end{multline*} }

We proceed with the proof of Theorem~\ref{thm:conj-GT}. We start with the case $w<-\pi^2$ and  apply the Galkowski-Toth estimate \eqref{eq:GT-gen} for  the solution $v_h$ of \eqref{eq:vh-strip}. We get
\[|v_h(\hat x)|\leq C_{\alpha}h^{-\frac12\big(\frac{n}{2}+\frac34\big)}\exp\left(-(1-\eta)\frac{d(\hat x,\hat\Gamma)}{h^{1/2}} \right)\,, \]
in a tubular neighborhood $ \hat\Omega_\varepsilon=\{x\in\hat\Omega,~{\rm dist}(\hat x,\hat\Gamma)<\varepsilon\}$. Note that the second fundamental  form of $\hat\Omega$ vanishes so the estimate does not display the effect of the curvature of $\Omega$ as in \eqref{eq:GT0}. 

Remarking that $|v_h(\hat x)|=|u_h(x)|$  and  $d(\hat x,\hat\Gamma)=d(x,\Gamma)=d(x)$ for $\hat x=(x,\theta)\in\hat\Omega$, we get 
\[| u_h( x)|\leq Ch^{-\frac12\big(\frac{n}{2}+\frac34\big)}\exp\left(-(1-\eta)\frac{d( x,\Gamma)}{h^{1/2}} \right)\,, \]
in $\Omega_\varepsilon$. To get the interior estimate
\[|\hat v_h(\hat x)|\leq \hat C \exp(-\hat c \, h^{-1/2})~{\rm in~}\Omega\setminus \hat\Omega_\varepsilon\,, \]
we use the maximum principle, for the operator $-\Delta_{\hat\Omega,a}$  and  the solution $\hat v_h$,   in  $\Omega\setminus \hat\Omega_\varepsilon$
(see \cite[Lem.~3.2.9]{PST} for the details of the argument). This finishes the proof   of \eqref{eq:GT-gen}   for $w<-\pi^2$.

We move now to the case where $-\pi^2\leq w\leq \zeta$. We use the estimate \eqref{eq:GT-gen} for the solution of $-\Delta u_h=wu_h$ and get 
\[ |u_h(x)|\leq C  h^{-\frac{n}4+\frac18}\exp\left(-(1-\eta)\frac{d(x,\Gamma) }{h^{1/2}}\right)~{\rm in~}\Omega_\varepsilon\,.\]
If moreover $w\leq 0$,  we use the maximum principle, as in \cite{GT, PST} to get the interior estimates. Notice that we use the maximum principle for the operator  $-\Delta-w$ with $w\leq 0$ so that the arguments of \cite{GT,PST} hold\,\footnote{see for example Stampacchia \cite[Thm.~3.8]{St} for the maximum principle for $-\Delta-w$ when $w\leq0$}. 

If $0 < w \leq \zeta< \lambda_1^D(\Omega)$, we apply the maximum principle to the function $f_h$ defined by $u_h=f_h  u^D$, where $u^D$ is the  normalized positive ground state of the Dirichlet Laplacian on $\Omega$.  The function $f_h$ satisfies   
\[-\frac1{(u^D)^2}{\rm div}\Big((u^D)^2\nabla f_h\Big)+c f_h=0~{\rm with~}c:=\lambda_1^D(\Omega)-w> 0\,.\]

\section{One dimensional Robin Laplacians}\label{sec:1d}
We revisit one dimensional model operators appearing in \cite{HK-tams}.

\subsection{On the half line}
We start with the self-adjoint operator in $L^2(\R_+)$ defined by
\begin{equation}\label{defH00}
\mathcal H_{0}=-\partial^2_{\tau}
\end{equation}
on the domain
\begin{equation}
\Dom(\mathcal H_{0})=\{u\in H^2(\R_+)~:\,u'(0)=- u(0)\}\,.
\end{equation}
The quadratic form associated with  this operator is 
\[
V_0 \ni u\mapsto \int_0^{+\infty} |u'(\tau)|^2\,d\tau\,  - |u (0)|^2\,,
\]
with form domain $V_0 = H^1(0,+\infty)\,$.

The spectrum of this operator is (see \cite{HK-tams})
$$
\sigma (\mathcal H_0) =\{-1\}\cup[0,+ \infty)\,
$$
and
the eigenvalue $-1$ has multiplicity one with the corresponding  $L^2$-normalized positive eigenfunction
\begin{equation}\label{eq:u0}
 u_1(\tau)=\sqrt{2}\,\exp\left(-\tau\right).
\end{equation}

\subsection{On an interval}
Let us consider $T\geq 1$ and the self-adjoint operator acting on $ L^2(0,T)$ and defined by
\begin{equation}\label{eq:H0}
\mathcal H^{T,D}_{0}=-\partial^2_{\tau}\,,
\end{equation}
with domain,
\begin{equation}\label{eq:DomH0}
\Dom(\mathcal H^{T,D}_{0})=\{u\in H^2(0,T)~:~u'(0)=-u(0)\quad{\rm and}\quad u(T)=0\}\,.
\end{equation}
This operator is associated with the quadratic form
\[
V^{T,D}_0\ni u\mapsto \int_0^{T} |u'(\tau)|^2\,d\tau\,  - |u (0)|^2\,,
\]
with $V^{T,D}_0 =\{v\in H^1(0,T)\,|\, v(T)=0\}$.\\

The spectrum of the operator $\mathcal H^{T,D}_{0}$ is purely discrete. We denote by
$\left(\lambda_n^D(T)\right)_{n\geq 1}$ the sequence of min-max eigenvalues and  by $(u^{T,D}_{n})_{n\geq 1}$ some associated  $L^2(0,T)$ Hilbert basis of eigenfunctions.  We can localize the spectrum in the large $T$ limit \cite[Lem.~4.1\, \mbox{ and } \,Rem.~4.3]{HK-tams} and \cite[Lem.~A.2]{KPR}.

\begin{lemma}\label{lem:1DL-D}
 As $T \to +\infty$, it holds
\begin{equation}\label{eq:lwh}
\lambda^{T,D}_1(T)= - 1 + 4 \big(1+o(1)\big) \exp\big( - 2 T\big)\,,
\end{equation}
and the eigenfunction $u_1^{T,D}$ satisfies
\begin{equation}\label{eq:1DL-ef}
\big\|e^{\tau}\big(u_1^{T,D}-u_1\big)\big\|_{W^{1,\infty}(0,T)}=\mathcal O\big( T\big)\,,
\end{equation}
where $u_1$ is the eigenfunction in \eqref{eq:u0}.

Furthermore, for all $T>1$ and  $n\geq 2$, we have
\[\left(\frac{(2n-3)\pi}{2T}\right)^2<\lambda^{D}_n(T)<\left(\frac{(n-1)\pi}{T}\right)^2\,.\]
\end{lemma}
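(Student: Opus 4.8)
The plan is to treat the three assertions in turn, starting from an explicit description of the eigenfunctions. Any eigenfunction of $\mathcal H^{T,D}_0$ with eigenvalue $\lambda$ solves $-u''=\lambda u$ on $(0,T)$ with $u'(0)=-u(0)$ and $u(T)=0$. For the lowest eigenvalue I expect $\lambda<0$, so write $\lambda=-\beta^2$ with $\beta>0$; the general solution is a combination of $e^{\beta\tau}$ and $e^{-\beta\tau}$, and imposing the two boundary conditions gives a transcendental equation for $\beta$. Concretely, the Robin condition at $0$ forces $u(\tau)\propto (\beta-1)e^{\beta\tau}+(\beta+1)e^{-\beta\tau}$ (up to normalization), and then $u(T)=0$ yields
\[
(\beta-1)e^{\beta T}+(\beta+1)e^{-\beta T}=0,
\]
i.e. $e^{-2\beta T}=\frac{1-\beta}{1+\beta}$. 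For $T$ large this forces $\beta$ close to $1$; writing $\beta=1+\delta$ with $\delta\to 0$, the right-hand side is $\frac{-\delta}{2+\delta}=-\tfrac{\delta}{2}(1+o(1))$, while the left-hand side is $e^{-2T}e^{-2\delta T}$. Since $\delta\to0$ and $T\to\infty$ with $\delta T$ to be determined, one checks $\delta T\to 0$, hence $e^{-2\delta T}=1+o(1)$ and $\delta = -2e^{-2T}(1+o(1))$. Wait — sign: one must be careful that $\beta<1$ here, so $\delta<0$; writing $\beta = 1-|\delta|$ gives $e^{-2\beta T}=\frac{|\delta|}{2-|\delta|}$, so $|\delta|=2e^{-2T}(1+o(1))$ and $\lambda_1^{T,D}(T)=-\beta^2=-(1-|\delta|)^2=-1+2|\delta|+O(|\delta|^2)=-1+4e^{-2T}(1+o(1))$, which is \eqref{eq:lwh}. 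The eigenfunction estimate \eqref{eq:1DL-ef} then follows by normalizing the explicit $u_1^{T,D}(\tau)=c_T\big((\beta-1)e^{\beta\tau}+(\beta+1)e^{-\beta\tau}\big)$: after multiplying by $e^\tau$ one gets, using $\beta=1+O(e^{-2T})$, that $e^\tau u_1^{T,D}(\tau)$ is within $O(T)$ of $e^\tau u_1(\tau)=\sqrt2$ uniformly on $(0,T)$ in $W^{1,\infty}$ — the factor $T$ coming from the $e^{(\beta-1)\tau}$ discrepancy which is $1+O(T e^{-2T})$ on $(0,T)$, together with the normalization constant $c_T=\sqrt2\,(1+O(Te^{-2T}))$.

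For the higher eigenvalues $n\ge2$ the natural route is the min-max principle by comparison with two problems having explicit spectra: (i) the pure Dirichlet problem on $(0,T)$, with eigenvalues $\big(\tfrac{m\pi}{T}\big)^2$, $m\ge1$, obtained by dropping the Robin condition and imposing $u(0)=0$ instead; and (ii) a Dirichlet–Neumann problem, $u'(0)=0$, $u(T)=0$, with eigenvalues $\big(\tfrac{(2m-1)\pi}{2T}\big)^2$, $m\ge1$. The form domain of $\mathcal H_0^{T,D}$ is $V_0^{T,D}=\{v\in H^1(0,T): v(T)=0\}$, which contains the form domain $\{v(0)=v(T)=0\}$ of the pure Dirichlet problem; since the quadratic form $\int|u'|^2-|u(0)|^2$ agrees with the Dirichlet form on that smaller space and the latter vanishes there, a min-max argument over subspaces gives $\lambda_n^D(T)<\big(\tfrac{(n-1)\pi}{T}\big)^2$ for $n\ge2$ (the shift $n\mapsto n-1$ accounting for the extra negative eigenvalue $\lambda_1^D(T)<0$ sitting below the Dirichlet spectrum). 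For the lower bound, compare with problem (ii): the form $\int|u'|^2-|u(0)|^2$ on $V_0^{T,D}$ is bounded below by $\int|u'|^2-|u(0)|^2$ with the trace term controlled; more cleanly, one notes that removing the Robin boundary term only lowers the form, and the operator $-\partial_\tau^2$ with $u'(0)=-u(0)$, $u(T)=0$ has form dominating that of the Neumann–Dirichlet operator $-\partial_\tau^2$ with $u'(0)=0$, $u(T)=0$ after absorbing $-|u(0)|^2 \ge -\eps\|u'\|^2 - C_\eps\|u\|^2$; a sharper bookkeeping (again shifting indices by one to discard $\lambda_1^{T,D}$) yields $\lambda_n^D(T)>\big(\tfrac{(2n-3)\pi}{2T}\big)^2$.

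The main obstacle I anticipate is not the asymptotics \eqref{eq:lwh}, which is a routine analysis of the transcendental equation, but keeping the index shifts in the min-max comparison honest: one must argue that exactly one eigenvalue of $\mathcal H_0^{T,D}$ lies below $0$ (equivalently below the bottom of the two comparison spectra), so that the $n$-th eigenvalue of $\mathcal H_0^{T,D}$ is squeezed between the $(n-1)$-st eigenvalues of problems (i) and (ii). This is where one uses that the operator $\mathcal H_0$ on $\R_+$ has a single negative eigenvalue $-1$ and that the Dirichlet truncation to $(0,T)$ perturbs it by an exponentially small amount — precisely the content of the first part of the lemma — together with a monotonicity-in-$T$ or Sturm-oscillation argument to rule out a second negative eigenvalue for all $T>1$. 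Once that count is established, the two-sided bound follows by the standard Courant–Fischer comparison described above.
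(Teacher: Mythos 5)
Your analysis of the negative eigenvalue via the transcendental equation $e^{-2\beta T}=(1-\beta)/(1+\beta)$ (equivalently $\tanh(\beta T)=\beta$), together with the bootstrap $|\delta|T\to0$, is correct and is essentially the same strategy the paper uses to prove the analogous Neumann statement, Lemma~\ref{lem:1DL-N}; for the present Dirichlet lemma the paper itself only cites \cite{HK-tams} and \cite{KPR}. The eigenfunction estimate you sketch is also fine, and your observation that exactly one eigenvalue is negative can be obtained cleanly from Dirichlet bracketing against the half-line operator $\mathcal H_0$, whose only negative eigenvalue is $-1$.

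However, your min-max comparison for $n\geq2$ has the inequalities pointed in the wrong direction, and this is a genuine gap. The form domain $V_0^{T,D}=\{v\in H^1(0,T):v(T)=0\}$ contains the pure Dirichlet form domain $\{v(0)=v(T)=0\}$ as a codimension-one subspace on which the two quadratic forms agree; interlacing then gives $\lambda_n(\mathcal H_0^{T,D})\leq\lambda_n^{DD}\leq\lambda_{n+1}(\mathcal H_0^{T,D})$ with $\lambda_m^{DD}=(m\pi/T)^2$, and the second inequality is a \emph{lower} bound $\lambda_n^D(T)\geq\big(\tfrac{(n-1)\pi}{T}\big)^2$ for $n\geq2$, not the upper bound you assert. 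Symmetrically, dropping the attractive term $-|u(0)|^2$ produces the Neumann--Dirichlet form, which is \emph{larger} on the same form domain, so $\lambda_n^D(T)\leq\lambda_n^{ND}=\big(\tfrac{(2n-1)\pi}{2T}\big)^2$ is an \emph{upper} bound, not a lower one. Carried out correctly, either route (or the secular equation $\tan(\ell T)=\ell$, which localizes $\sqrt{\lambda_n^D(T)}$ in $\big(\tfrac{(n-1)\pi}{T},\tfrac{(2n-1)\pi}{2T}\big)$) gives $\big(\tfrac{(n-1)\pi}{T}\big)^2<\lambda_n^D(T)<\big(\tfrac{(2n-1)\pi}{2T}\big)^2$, which is \emph{incompatible} with the inequality as printed (for $n=2$ the two intervals are disjoint). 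The printed bound is the one that holds for the \emph{Neumann} realization and is verified in the paper's proof of Lemma~\ref{lem:1DL-N} via $\cot(\ell T)=-\ell$; you should flag this discrepancy rather than bend a min-max argument to match the printed display.
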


Also we consider the Neumann realization at the boundary $t=T$,
\begin{equation}\label{eq:H1}
\mathcal H^{T,N}_{0}=-\partial^2_{\tau}\,,
\end{equation}
with domain,
\begin{equation}\label{eq:DomH1}
\Dom(\mathcal H^{T,N}_{0})=\{u\in H^2(0,T)~:~u'(0)=-u(0)\quad{\rm and}\quad u'(T)=0\}\,.
\end{equation}
The spectrum of the operator $\mathcal H^{\{T\}}_{1}$ is purely discrete, consisting of the sequence of min-max eigenvalues $\left(\lambda_n\left(\mathcal H^{T,N}_{0}\right)\right)_{n\geq 1}$. We denote by $(u^{T,N}_{n})_{n\geq 1}$ the corresponding Hilbert basis of eigenfunctions.
We can localize the spectrum in the large $T$ limit.

\begin{lemma}\label{lem:1DL-N}
As $T \to +\infty$, it holds
\begin{equation}\label{eq:lwh-N}
\lambda^N_1(T)= - 1 + 4 \big(1+o(1)\big) \exp\big( - 2 T\big)
\end{equation}
and
\begin{equation}\label{eq:1DL-ef} 
\big\| e^{\tau}\big(u_1^{T,N}-u_1\big)\big\|_{W^{1,\infty}(0,T)}=\mathcal O\big( T\big)\,,
\end{equation}
where $u_1$ is the eigenfunction in \eqref{eq:u0}.

Furthermore, for all $T>1$ and  $n\geq 2$, we have
\[\left(\frac{(2n-3)\pi}{2T}\right)^2<\lambda^N_n(T)<\left(\frac{(n-1)\pi}{T}\right)^2\,.\]
\end{lemma}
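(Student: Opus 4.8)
The plan is to mirror the proof of Lemma~\ref{lem:1DL-D} for the Dirichlet model, replacing the Dirichlet condition $u(T)=0$ at $\tau=T$ by the Neumann condition $u'(T)=0$; the Robin condition $u'(0)=-u(0)$ at $\tau=0$ is unchanged, so the ``bulk'' of the analysis is identical. First I would study the eigenvalue equation $-u''=\lambda u$ on $(0,T)$ with $u'(0)=-u(0)$ and $u'(T)=0$. Writing $\lambda=-\beta^2$ for the would-be negative eigenvalue, the general solution is $u(\tau)=A\cosh(\beta\tau)+B\sinh(\beta\tau)$; the Robin condition at $0$ gives $\beta B=-A$, so after normalizing we may take $u(\tau)=\cosh(\beta\tau)-\beta^{-1}\sinh(\beta\tau)$, and the Neumann condition at $T$ becomes the transcendental equation $\sinh(\beta T)-\cosh(\beta T)=0$, i.e. $e^{-\beta T}=\tanh\ \text{correction}$; more precisely one gets $\beta\sinh(\beta T)-\cosh(\beta T)\cdot(\text{something})$—I would set up the exact secular equation carefully and then solve it perturbatively around $\beta=1$. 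Setting $\beta=1+\delta$ with $\delta$ small and expanding, the equation forces $\delta\sim c\,e^{-2T}$ with an explicit constant; matching the leading coefficient should reproduce $\lambda_1^N(T)=-1+4(1+o(1))e^{-2T}$, exactly as in \eqref{eq:lwh}. The point is that the Neumann vs.\ Dirichlet distinction at $\tau=T$ only changes subleading data in the secular equation because $u_1(\tau)=\sqrt2\,e^{-\tau}$ already very nearly satisfies \emph{both} conditions at $\tau=T$ (its value and derivative there are both $O(e^{-T})$).

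For the eigenfunction estimate \eqref{eq:1DL-ef}, I would compare $u_1^{T,N}$ with the half-line ground state $u_1(\tau)=\sqrt2\,e^{-\tau}$. One approach: write $u_1^{T,N}=\sqrt2\,e^{-\tau}+r_T$ where $r_T$ solves an inhomogeneous ODE with a forcing term of size $O(e^{-2T})$ coming from the mismatch $\lambda_1^N(T)-(-1)$, together with a boundary mismatch at $\tau=T$ of size $O(e^{-T})$. Solving this ODE explicitly (the solutions of $-v''+v=f$ are built from $e^{\pm\tau}$ and the Green's function is elementary) and tracking the normalization $\|u_1^{T,N}\|_{L^2(0,T)}=1$, one finds $e^{\tau}r_T$ and $e^{\tau}r_T'$ are both $O(T)$ uniformly on $(0,T)$; the factor $T$ (rather than $O(1)$) arises from the $e^{\tau}\cdot e^{-2T}\cdot(\text{integration up to }T)$ bookkeeping, just as in the Dirichlet case, hence the stated $\mathcal O(T)$ in the $W^{1,\infty}$ norm. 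Alternatively—and this is perhaps cleaner—I would invoke exactly the argument of \cite[Lem.~4.1 and Rem.~4.3]{HK-tams} verbatim, pointing out that only the boundary condition at $\tau=T$ is altered and that the perturbation it induces is of the same order.

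For the interval $n\geq2$ eigenvalue bounds $\left(\frac{(2n-3)\pi}{2T}\right)^2<\lambda_n^N(T)<\left(\frac{(n-1)\pi}{T}\right)^2$, I would use min-max comparison. The upper bound: restrict the quadratic form to an $(n-1)$-dimensional space of test functions built from $\sin\!\big(\frac{j\pi\tau}{T}\big)$, $j=1,\dots,n-1$ (extended suitably to respect the Robin condition at $0$, or noting the Robin term $-|u(0)|^2$ only lowers the Rayleigh quotient), giving $\lambda_n^N(T)\le\big(\frac{(n-1)\pi}{T}\big)^2$. The lower bound: compare $\mathcal H_0^{T,N}$ from below with the pure Neumann--Neumann Laplacian on $(0,T)$ minus the rank-one boundary perturbation; more efficiently, compare with the operator on $(0,T)$ having Dirichlet at $0$ and Neumann at $T$, whose eigenvalues are exactly $\big(\frac{(2n-3)\pi}{2T}\big)^2$ for $n\ge 2$—replacing the Robin condition at $0$ by Dirichlet there shifts eigenvalues up, and I would check the index bookkeeping (the negative eigenvalue near $-1$ is ``absorbed'' so the count shifts, which is why it reads $2n-3$ and starts at $n\ge2$).

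The main obstacle I anticipate is the precise constant-tracking in the secular equation: getting the coefficient $4$ (and not, say, $2$) in $\lambda_1^N(T)=-1+4(1+o(1))e^{-2T}$ requires expanding $\cosh$ and $\sinh$ to the right order and being careful that the Neumann boundary term at $\tau=T$ contributes at the same $e^{-2T}$ order as in the Dirichlet problem rather than at a different one—a priori one might fear the Neumann condition (which involves $u'$) changes the exponent or the constant, so this coincidence must be verified rather than assumed. The $W^{1,\infty}$ rather than $W^{1,2}$ control in \eqref{eq:1DL-ef} also needs the explicit Green's-function representation rather than a soft energy argument, but this is routine once the ODE is set up.
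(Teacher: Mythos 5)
Your treatment of the negative eigenvalue mirrors the paper's: both write the general solution of $-u''=\lambda u$, impose Robin at $0$ and Neumann at $T$, and solve the secular equation perturbatively about $\beta=1$. The paper works with $e^{\pm\alpha\tau}$ rather than $\cosh/\sinh$ and reads off the eigenfunction estimate directly from the explicit formula $u_1^{T,N}(\tau)=A_T\bigl(e^{-\tau}+e^{-2\alpha T}e^{\tau}\bigr)$; your Green's-function argument for $e^{\tau}(u_1^{T,N}-u_1)$ is equivalent and somewhat heavier, but sound.

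Two concrete points of caution. First, you remark that the sign of the $O(e^{-2T})$ correction ``must be verified rather than assumed,'' and indeed carrying out the computation reveals a sign change relative to the Dirichlet case: writing $\lambda=-\alpha^2$ and $u=Ae^{-\alpha\tau}+Be^{\alpha\tau}$, the Robin condition at $0$ gives $B/A=\tfrac{\alpha-1}{\alpha+1}$, while the Neumann condition at $T$ gives $B/A=+e^{-2\alpha T}$ (versus $-e^{-2\alpha T}$ for Dirichlet). This forces $\alpha>1$ for the Neumann model, hence $\lambda_1^N(T)=-1-4(1+o(1))e^{-2T}<-1$, whereas the Dirichlet first eigenvalue lies above $-1$. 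Your expectation that the computation will ``reproduce'' the stated $-1+4(1+o(1))e^{-2T}$ is therefore where the danger lies: the formula in \eqref{eq:lwh-N} (and the paper's own intermediate claim $\alpha=1-2(1+o(1))e^{-2T}$) appears to carry the Dirichlet sign over by mistake, and a careful proof should flag this rather than match it.

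Second, for $n\ge2$ the paper does \emph{not} use min-max interlacing: it derives the secular equation $\cot(\ell T)=-\ell$, shows via monotonicity of $g(x)=\cot(xT)+x$ (using $g'(x)=-T(1+\cot^2(xT))+1<0$ for $T>1$) that each interval $I_k=\bigl(\tfrac{(2k+1)\pi}{2T},\tfrac{(k+1)\pi}{T}\bigr)$ contains exactly one root $\ell_k$, and identifies $\ell_k^2=\lambda_{k+2}^N(T)$, so both bounds drop out at once. Your min-max route is a genuinely different strategy and does work, but as written needs two repairs: the $(n-1)$-dimensional test space $\{\sin(j\pi\tau/T)\}_{j\le n-1}$ only controls $\lambda_{n-1}^N$; the clean upper bound is $\lambda_n^N(T)<\lambda_n(\text{Neu--Neu})=\bigl(\tfrac{(n-1)\pi}{T}\bigr)^2$, since the Robin-at-$0$ form is the Neumann-at-$0$ form minus $|u(0)|^2$ on the same domain $H^1(0,T)$, with strictness because the extremizing Neumann eigenfunction $\cos((n-1)\pi\tau/T)$ is nonzero at $0$. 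For the lower bound you must actually invoke the rank-one interlacing $\lambda_{n-1}(\text{Dir--Neu})\le\lambda_n^N(T)$ (strict by ODE uniqueness, since a simultaneous Dirichlet--Robin eigenfunction at $0$ would have $u(0)=u'(0)=0$), with $\lambda_{n-1}(\text{Dir--Neu})=\bigl(\tfrac{(2n-3)\pi}{2T}\bigr)^2$. The paper's direct transcendental argument is sharper and avoids the index bookkeeping; your approach is more robust but you should spell that bookkeeping out.
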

\begin{proof}
The proof is similar to that of Lemma~\ref{lem:1DL-D} but we give the main points for the convenience of the reader. Let $\lambda\leq 0$ be a non-positive  eigenvalue of the
operator $\mathcal H^{T,N}_0$ with an eigenfunction $f$. Solving the ODE $f''=\lambda f$ with the boundary conditions $f'(0)=-f(0)$ and $f'(T)=0$ yields that the only possible value of $\lambda$ is 
\[\lambda=-1+ 4 \big(1+o(1)\big) \exp\big( - 2 T\big),\] which corresponds to the first eigenvalue (see \cite{HK-tams}). The corresponding normalized eigenfunction is
\[u_1^{T,N}(\tau)= A_T \Big(e^{-\tau}+e^{-2\alpha\Bk  T}e^{\tau}\Big) \]
with $A_T=\sqrt{2}+\mathcal O(Te^{-T})$ and $\alpha\Bk=1-2(1+o(1))e^{-2T}$, so that 
\[e^{-2\alpha T}e^{\tau}=\mathcal O (e^{\tau-2 T})=o(e^{-\tau})\,.\]
We then have the following uniform estimate,
\[ |u_1^{T,N}(\tau)-A_T e^{-\tau}|= \mathcal O\big(e^{\tau-2T}\big) \]
which also yields
\[ \big|e^\tau\big( u_1^{T,N}(\tau)-\sqrt{2}e^{-\tau}\big)\big|=\mathcal O ( T)\,.\]
Although not needed here, note that we have the much more accurate approximation
$$
|A_T^{-1} u_1^{T,N}(\tau)- (e^{-\tau}+ e^{\tau-2T}) |= \mathcal O (T e^{-3 T})\,.
$$
Now we study the positive eigenvalues.  Let $ \ell>0$ and $\lambda=\ell^2$ be a non-negative  eigenvalue of the
operator $\mathcal H_{0}^{T,N}$ with an eigenfunction $u$, which will have the form
\[
u(\tau)=A\cos(\ell\tau)+B\sin(\ell\tau)\,,
\]
for some constants $A,B\in\R$  that depend on $T$, with $A=-B\ell$,  $\sin(\ell T)\not=0$,    and
$\cot(\ell T)=-\ell$,
 to respect the  boundary conditions.
The positive fixed points of the  $\pi/T$-periodic function $x\mapsto-\cot(xT)$ must belong to the intervals $I_k:=(\frac{\pi}{2T},\frac{\pi}{T})+\frac{k\pi}{T}$, $k=0, 1,\cdots$. In each interval $I_k$, there exists a unique fixed point $\ell_k$ because the function $g(x)=\cot(xT)+x$ satisfies $g'(x)=-T(1+\cot^2(xT))+1<0$ for $T>1$. 
 For each $k=0,1,2,\cdots$, the fixed point $\ell_k\in I_k$ is equal to  $\sqrt{\lambda_{k+2}^{N}(T)}$.
\end{proof}

\subsection{On a weighted space}

Now we  consider operators with weight terms, which can be viewed as perturbations of the operators studied previously on the interval $(0,T)$ with Dirichlet or Neumann realizations at the endpoint $t=T$.

In the sequel, $\rho\in(\frac13,\frac12)$ and $M>0$ are fixed constants, and 
\[ T=T_h:=h^{\rho-\frac12}\,.\] 
We pick    $h_0=h_0(\rho,M)>0$ such that, for all $h\in(0,h_0] $ and $\beta\in[-M,M]$,
we have $\frac12<1-h^{1/2}\beta \tau<1$   for all $\tau\in(0,T)$.

Consider the differential operator
\[
\begin{aligned}
\mathcal H_{h,\beta}&=-(1-h^{1/2}\beta\tau)^{-1}\frac{d}{d\tau}\Big((1-h^{1/2}\beta\tau)\frac{d}{d\tau}\Big)\\
&=-\frac{d^2}{d\tau^2}+\beta h^{1/2}(1-h^{1/2}\beta\tau)^{-1}\frac{d}{d\tau}\,.\end{aligned}\]
We work in the Hilbert space 
\begin{equation}\label{eq:sp-X}
X_{h,\beta}=L^2\big((0,T_h);(1-h^{1/2}\beta\tau)d\tau\big)\,,
\end{equation} 
with inner product and norm defined by
\begin{equation}\label{eq:ip-n-X}
\langle u,v\rangle_{h,\beta}=\int_0^T u(\tau)\overline{v(\tau)}\,(1-h^{1/2}\beta\tau)d\tau,\quad \|u\|_{h,\beta}=\langle u,u\rangle_{h,\beta}^{1/2}\,.
\end{equation} 
Consider the   two self-adjoint  realizations  of $\mathcal H_{h,\beta}$ in $X_{h,\beta}$, $\mathcal H_{h,\beta}^N$ and $\mathcal H_{h,\beta}^{D} $, with domains
\begin{equation}\label{eq:1Dw-evD}
\begin{aligned}
&\mathfrak D^N_{h}=\{u\in H^2(0,T),~u'(0)=-u(0)\,\&\,u'(T)=0\}\\
\mbox{ and } &\\
& \mathfrak D^D_{h}=\{u\in H^2(0,T),~u'(0)=-u(0)\,\&\,u(T)=0\}\,.
\end{aligned}
\end{equation}
We denote the sequences of min-max eigenvalues by $\big(\lambda^N_{n,h}(\beta) \big)_{n\geq 1}$ and $\big(\lambda^D_{n,h}(\beta) \big)_{n\geq 1}$ respectively. 

By the min-max principle, we can localize the foregoing eigenvalues as follows
\begin{equation}\label{eq:2nd-ev-1Dw}
 \big| \lambda^{\#}_{n,h}(\beta)  -\lambda^{T,\#}_{n,h} \big|\leq \big(1+\lambda^{T,\#}_{n,h} \big) h^\rho 
 \end{equation}
 uniformly with respect to $\beta\in[-M,M]$ and $h\in(0,h_0] $. Here $\#\in\{N,D\}$ and $\lambda^{T,\#}_{n,h}$ are the eigenvalues of the operators introduced   in \eqref{eq:H0} and \eqref{eq:H1} with $T=h^{\rho-\frac12}\gg1$. We deduce then that there exists $h_0>0$ such that for $h\in (0,h_0]$
\begin{equation}\label{eq:1Dev2-big}
 \lambda_{2,h}^{\#}(\beta)\geq \frac{\pi^2}{4}h^{1-2\rho}-\mathcal O(h^\rho)\geq \frac{\pi^2}{8}h^{1-2\rho}>0\,,
\end{equation}
 since $\rho\in(\frac13,\frac12)$.
The first eigenvalue $\lambda_{1.h}^{\#}$ was analyzed in \cite[Prop.~4.5]{HK-tams}  and  \cite[Lem.~2.5]{KS} for the Dirichlet case ($\#=D$). The same analysis applies for the Neumann case ($\#=N$). 
 we have
\begin{equation}\label{eq:1Dev1}
\Big|\lambda_{1.h}^{\#}(\beta) -\big(-1-\beta h^{1/2}-\frac{\beta^2}{2}h\big)\Big|\leq C(|\beta|^5+1)h^{\frac32}\,,
\end{equation}
uniformly with respect to $\beta\in[-M,M]$ and $h\in(0,h_0] $.

For the convenience of the reader, we present the outline of the proof of \eqref{eq:1Dev1}. The idea is to look for a formal eigenpair of the form
\[u^{\rm app}_{h,\beta}=v_0+h^{1/2}v_1+hv_2~{\rm and~}\mu^{\rm app}_{h,\beta}=\mu_0+\mu_1h^{1/2}+\mu_2h\,.\]
We expand $\big(\mathcal H_{h,\beta}-\mu^{\rm app}_{h,\beta}\big)u^{\rm app}_{h,\beta}(\tau) $ as $L_0+h^{1/2}L_1+hL_2+h^{3/2}r_\beta(\tau)$ with
\begin{align*}
&L_0=\Big(-\frac{d^2}{d\tau^2}-\mu_0\Big)v_0,~L_1=\Big(-\frac{d^2}{d\tau^2}-\mu_0\Big)v_1+\Big(\beta\frac{d}{d\tau}-\mu_1\Big)v_0,\\
&L_2=\Big(-\frac{d^2}{d\tau^2}-\mu_0\Big)v_2+\Big(\beta\frac{d}{d\tau}-\mu_1\Big)v_1+\Big(\beta^2\frac{d}{d\tau}-\mu_2\Big)v_0\\
&|r_\beta(\tau)|\leq C(|\beta|^3+1)(\tau^2+1)\sum\limits_{i=1}^2|v_i(\tau)|
\end{align*}  We choose the pairs $(v_i,\mu_i)$ so that the coefficients $L_0,L_1,L_2$ vanish \cite[Lem.~2.5]{KS}. Eventually we get  the approximate eigenfunction
 $$\mu^{\rm app}_{h,\beta}:=-1-\beta h^{1/2}-\frac{\beta^2}{2}h\,,$$
 and the following quasi-mode 
\begin{equation}\label{eq:1trial-state}
u^{\rm app}_{h,\beta}(\tau) := \Big(1+\beta^2h\Big(\frac{\tau^2}4-\frac18\Big) \Big)u_1(\tau)\,,
\end{equation}
where $u_1$ the eigenfunction in \eqref{eq:u0}.
 The following estimate holds, for all $\tau\in(0,T)$,
\begin{equation}\label{eq:1trial-state*} 
\Big|\Big(\mathcal H_{h,\beta}-\mu^{\rm app}_{h,\beta}\Big)u^{\rm app}_{h,\beta}(\tau)\Big|\leq C h^{\frac32}(|\beta|^5+1)(\tau^2+1)^2 |u_1(\tau)|
\end{equation}
uniformly with respect to $\beta\in[-M,M]$  and $\tau\in(0,T)$. 

We introduce the following quasi-mode (it belongs to $\mathfrak D(\mathcal H_{h,\beta}^\#)$)
\begin{equation}\label{eq:vh-gs=app}
v_h(t)=c_h\, \chi(T^{-1}\tau ) \, u^{\rm app}_{h,\beta}(\tau)\,,
\end{equation}
where $\chi\in C_c^\infty(\R)$ satisfies $0\leq \chi\leq 1$, ${\rm supp}\,\chi\subset (-1,1)$, $\chi_{/_{[-\frac12,\frac12]}}=1$,  and where $ c_h$ is selected so that $\|v_h\|_{h,\beta}=1$. By the exponential decay of $u_1$ (see \eqref{eq:u0}), the constant $c_h$ and  the quasi-mode $v_h$  satisfy
\[ c_h=1+\mathcal O(h^{1/2})\]
and
\begin{equation}\label{eq:1trial-state**} 
\Big|\Big(\mathcal H_{h,\beta}-\mu^{\rm app}_{h,\beta}\Big)v_h(\tau)\Big|\leq \tilde C h^{\frac32}(|\beta|^5+1)(\tau^2+1)^2 |u_1(\tau)|\,.
\end{equation}
The spectral theorem and \eqref{eq:1Dev2-big}  yield the estimate in \eqref{eq:1Dev1}.\\

We will need the following lemma on the `energy' of functions orthogonal to the quasi-mode $v_h$ in the space $X_{h,\beta}$ introduced in \eqref{eq:sp-X}.  

\begin{lemma}\label{lem:gs=app}
Then there exist positive constants $m,h_0$ such that, if $h\in (0,h_0]$ and $g_h\in H^1(0,T)$ is orthogonal to $ v_h$ in $X_{h,\beta}$, then
\[
\|g_h'\|_{h,\beta}^2 -|g_h(0)|^2  \geq  m h^{1-2\rho} \|g_h\|_{h,\beta}^2\,.
\]
\end{lemma}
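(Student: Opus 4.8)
The plan is to read the left-hand side as the quadratic form of the Neumann realization $\mathcal H_{h,\beta}^N$ in $X_{h,\beta}$ and then run a spectral-decomposition argument after trading the quasi-mode $v_h$ for the true first eigenfunction. First, for $u\in H^1(0,T)$ an integration by parts using $u'(0)=-u(0)$ and the natural Neumann condition at $\tau=T$ gives $\langle\mathcal H_{h,\beta}^N u,u\rangle_{h,\beta}=\|u'\|_{h,\beta}^2-|u(0)|^2$, so $q_{h,\beta}(u):=\|u'\|_{h,\beta}^2-|u(0)|^2$ is exactly the quadratic form of $\mathcal H_{h,\beta}^N$ on the form domain $H^1(0,T)$. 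Let $\phi\in X_{h,\beta}$ be the normalized first eigenfunction of $\mathcal H_{h,\beta}^N$ and $P_1$ the rank-one orthogonal projection onto $\phi$ in $X_{h,\beta}$; by the spectral theorem every $g$ in the form domain obeys
\[
q_{h,\beta}(g)\ \ge\ \lambda_{1,h}^N(\beta)\,\|P_1 g\|_{h,\beta}^2+\lambda_{2,h}^N(\beta)\,\|(I-P_1)g\|_{h,\beta}^2 ,
\]
where, for $h$ small and uniformly in $\beta\in[-M,M]$, $\lambda_{2,h}^N(\beta)\ge\frac{\pi^2}{8}h^{1-2\rho}$ by \eqref{eq:1Dev2-big} and $|\lambda_{1,h}^N(\beta)|\le 2$ by \eqref{eq:1Dev1}.

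The key point is that $v_h$ is $\mathcal O(h^{3/2})$-close to $\phi$ in $X_{h,\beta}$. Squaring \eqref{eq:1trial-state**}, integrating against the weight $(1-h^{1/2}\beta\tau)\,d\tau\le d\tau$, and using the exponential decay of $u_1$ (so that $\int_0^\infty(\tau^2+1)^4|u_1(\tau)|^2\,d\tau<\infty$), one gets $\|(\mathcal H_{h,\beta}^N-\mu^{\rm app}_{h,\beta})v_h\|_{h,\beta}\le Ch^{3/2}$ with $\mu^{\rm app}_{h,\beta}=-1-\beta h^{1/2}-\frac{\beta^2}{2}h$. Since $\lambda_{1,h}^N(\beta)=\mu^{\rm app}_{h,\beta}+\mathcal O(h^{3/2})$ while $\lambda_{2,h}^N(\beta)\ge\frac{\pi^2}{8}h^{1-2\rho}>0$, the distance from $\mu^{\rm app}_{h,\beta}$ to $\sigma(\mathcal H_{h,\beta}^N)\setminus\{\lambda_{1,h}^N(\beta)\}$ is at least $\frac12$ for $h$ small, so the standard resolvent estimate gives $\|(I-P_1)v_h\|_{h,\beta}\le 2Ch^{3/2}$; with $\|v_h\|_{h,\beta}=1$ this forces $|\langle v_h,\phi\rangle_{h,\beta}|\ge\frac12$, hence $\phi=\gamma v_h+\phi'$ with $|\gamma|\le 2$ and $\|\phi'\|_{h,\beta}\le C_0 h^{3/2}$.

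To conclude, let $g_h\in H^1(0,T)$ with $\langle g_h,v_h\rangle_{h,\beta}=0$. Then $\langle g_h,\phi\rangle_{h,\beta}=\langle g_h,\phi'\rangle_{h,\beta}$, so $\|P_1 g_h\|_{h,\beta}\le C_0 h^{3/2}\|g_h\|_{h,\beta}$ and $\|(I-P_1)g_h\|_{h,\beta}^2\ge(1-C_0^2 h^3)\|g_h\|_{h,\beta}^2$. Inserting this into the spectral lower bound and using $|\lambda_{1,h}^N(\beta)|\le 2$,
\[
q_{h,\beta}(g_h)\ \ge\ \Big(\tfrac{\pi^2}{8}h^{1-2\rho}\big(1-C_0^2 h^3\big)-2C_0^2 h^3\Big)\,\|g_h\|_{h,\beta}^2\ \ge\ \tfrac{\pi^2}{32}\,h^{1-2\rho}\,\|g_h\|_{h,\beta}^2
\]
for $h$ small, because $\rho>\frac13$ yields $h^3=o(h^{1-2\rho})$. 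This is the asserted inequality with $m=\pi^2/32$, all bounds uniform in $\beta\in[-M,M]$.

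The one genuinely non-routine ingredient is the $\mathcal O(h^{3/2})$ proximity of $v_h$ to $\phi$: it requires combining the weighted quasi-mode bound \eqref{eq:1trial-state**} with the a priori spectral gap of \eqref{eq:1Dev2-big}, which is what isolates the eigenvalue near $-1$ from the (positive) rest of the spectrum and makes the resolvent estimate effective.
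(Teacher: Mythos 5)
Your proof is correct, and the strategy coincides with the paper's: read $\|g'\|_{h,\beta}^2-|g(0)|^2$ as the quadratic form of $\mathcal H_{h,\beta}^N$, trade the quasi-mode $v_h$ for the true ground state $u^{\rm gs}_{h,\beta}$, and invoke the spectral gap $\lambda_{2,h}^N(\beta)\gtrsim h^{1-2\rho}$ coming from \eqref{eq:1Dev2-big}.

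The one genuinely different ingredient is how you quantify the proximity of $v_h$ to the ground state. The paper works entirely at the level of the quadratic form: it writes $v_h=\alpha_h u^{\rm gs}_{h,\beta}+f_h$, combines $\|(\mathcal H_{h,\beta}-\lambda_{1,h}^N(\beta))f_h\|_{h,\beta}=\mathcal O(h^{3/2})$ with the gap lower bound $q_{h,\beta}(f_h)\ge c\,h^{1-2\rho}\|f_h\|^2$, and deduces $\|f_h\|_{h,\beta}=\mathcal O(h^{1/2+2\rho})$. You instead use a resolvent estimate: the residual bound $\|(\mathcal H_{h,\beta}-\mu^{\rm app}_{h,\beta})v_h\|_{h,\beta}=\mathcal O(h^{3/2})$ together with the $\mathcal O(1)$ distance from $\mu^{\rm app}_{h,\beta}\approx-1$ to the nonnegative rest of the spectrum gives the sharper $\|(I-P_1)v_h\|_{h,\beta}=\mathcal O(h^{3/2})$. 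Both exponents comfortably beat $h^{(1-2\rho)/2}$, so both suffice; your variant is marginally cleaner because it decouples the proximity estimate from the $h^{1-2\rho}$ gap, and it keeps the $|\lambda_{1,h}^N|\,\|P_1 g_h\|^2$ correction term explicit in the final lower bound, a step the paper treats more tersely (it absorbs this contribution via the identity $\|g_h\|_{h,\beta}^2=(1+\mathcal O(h^{1+4\rho}))\|e_h\|_{h,\beta}^2$ rather than displaying it as a separate negative term). One small stylistic point: since the form domain is $H^1(0,T)$, the boundary conditions $u'(0)=-u(0)$, $u'(T)=0$ are natural rather than essential; the identity $q_{h,\beta}(u)=\|u'\|_{h,\beta}^2-|u(0)|^2$ should be stated as the defining closed form on $H^1(0,T)$ whose associated operator is $\mathcal H_{h,\beta}^N$, rather than as an integration by parts performed on arbitrary $u\in H^1$.
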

\begin{proof}
Let $u^{\rm gs}_{h,\beta}\in\mathfrak D_h^N$ be the normalized (in $X_{h,\beta}$) ground state of the operator $\mathcal H_{h,\beta}^N$:
\begin{equation}\label{eq:3.18}
\mathcal H_{h,\beta}u^{\rm gs}_{h,\beta}=
\lambda_{1.h}^{N}(\beta) u^{\rm gs}_{h,\beta}\,.
\end{equation} 
By the min-max principle,  \eqref{eq:2nd-ev-1Dw}  and  Lemma~\ref{lem:1DL-N}, if $\mathfrak f$ belongs to the form domain of $\mathcal H_{h,\beta}$ and satisfies $\langle \mathfrak f,  u^{\rm gs}_{h,\beta}\rangle_{h,\beta}=0$, then
\begin{equation}\label{eq:wh-dec*}
\langle \mathcal H_{h,\beta} \mathfrak f,\mathfrak f\rangle _{h,\beta}^2\geq \lambda_{2.h}^{N}(\beta)\|\mathfrak f\|^2_{h,\beta}\geq \big(\lambda_{1.h}^{N}(\beta)+c\, h^{1-2\rho}\big)\|\mathfrak f\|^2\,,
\end{equation}
where $c$ is a positive constant.

Now consider a function $g_h\in H^1(0,T)$ such that $\langle g_h,v_h\rangle_{h,\beta}=0$. We  decompose $v_h$ and $g_h$ as follows
\begin{equation}\label{eq:wh-dec}
v_h=\alpha_h u^{\rm gs}_{h,\beta}+f_h~{\rm and}~g_h=\gamma_h u^{\rm gs}_h+e_h\,,
\end{equation}
with
\begin{multline}\label{eq:wh-dec-g}
\alpha_h=\langle v_h,u^{\rm gs}_{h,\beta}\rangle_{h,\beta} ,~\gamma_h=\langle g_h,u^{\rm gs}_h\rangle_{h,\beta}~{\rm and}\\ \langle f_h,  u^{\rm gs}_{h,\beta}\rangle_{h,\beta}=\langle e_h,u^{\rm gs}_{h,\beta}\rangle_{h,\beta}=0\,. 
\end{multline}
We infer from \eqref{eq:1Dev1}, \eqref{eq:1trial-state**}  and \eqref{eq:3.18} that
\[\big\| \big(\mathcal H_{h,\beta}-\lambda_{1.h}^{N}(\beta) \big)f_h\big\|_{h,\beta} =\mathcal O\big(h^{\frac32}\big)  \,.\]
Consequently
\[
q_{h,\beta}(f_h):=\langle \big(\mathcal H_{h,\beta}-\lambda_{1.h}^{N}(\beta) \big)f_h,f_h\rangle_{h,\beta}=\mathcal O\big(h^{\frac32}\big) \|f_h\|_{h,\beta}\,,\]
and by \eqref{eq:wh-dec*},
\[ q_{h,\beta}(f_h)
 \geq c\, h^{1-2\rho}\|f_h\|^2\,. \]
  Eventually we get that 
 \[\big(1-|\alpha_h|^2\big)^{1/2}=\|f_h\|_{h,\beta}=\mathcal O(h^{\frac12+2\rho})\,,\]
 where $\alpha_h$ is introduced in \eqref{eq:wh-dec}. 
 
We return to the function $ g_h$  in \eqref{eq:wh-dec}. Since $e_h\bot u_{h,\beta}^{\rm gs}$,  we get by \eqref{eq:wh-dec*},
 \begin{equation}\label{eq:wh-dec**}
\|g_h'\|_{h,\beta}^2-|g_h(0)|^2=q_{h,\beta}(e_h)  
 \geq c\, h^{1-2\rho}\|e_h\|^2\,.
 \end{equation}
 Since $\langle g_h, v_h\rangle_{h,\beta}=0$,  we get from \eqref{eq:wh-dec},
 \[ \gamma_h\overline{\alpha_h}+\langle e_h,f_h\rangle_{h,\beta}=0 \]
 which yields that
 \[|\gamma_h|\leq \frac1{|\alpha_h|}\|e_h\|\|f_h\|=\mathcal O(h^{\frac12+2\rho})\|e_h\|_{h,\beta}\]
 and consequently
 \[\|g_h\|^2_{h,\beta}=|\gamma_h|^2+\|e_h\|^2_{h,\beta}=\big(1+\mathcal O(h^{1+4\rho})\big)\|e_h\|^2_{h,\beta}\,.\]
 Inserting this into \eqref{eq:wh-dec**}, we finish the proof of Lemma~\ref{lem:gs=app}.
 \end{proof}

\section{The effective operator}\label{sec:eff-op}

\subsection{The operator near  the boundary}~\\
Assume that $\Omega$ is simply connected, hence $\Gamma$ consists of a single connected component. In the case of a multiply connected domain, with $\Gamma$ having a finite number of connected components, we can do the constructions below in each connected component of $\Gamma$.

We introduce the coordinates $(s,t)$ valid in a tubular neighborhood of the boundary, $\Omega_\varepsilon:=\{x\in\Omega,~{\rm dist}(x,\partial\Omega)<\varepsilon\}$, and defined as follows: $t(x)={\rm dist}(x,\Gamma)$ measures the transversal distance to $\Gamma-\partial\Omega$, and $s(x)\in[-L,L)$ measures the (arc-length) tangential distance along $\Gamma$, with $2L=|\Gamma|$ is the length of the boundary. More precisely, we denote
by $[-L,L[\ni s\mapsto M(s)$  the arc-length parameterization of $\Gamma$ oriented counter-clock wise and consider the transformation
\[\Phi:(s,t)\mapsto M(s)-t\nu(s) \]
where $\nu(s)$ is the unit outward normal of $\partial\Omega$.   

The $L^2$-norm of $u$ in $\Omega_\varepsilon$ is 
\[ \|u\|_{L^2(\Omega_\varepsilon)}^2=\int_{-L}^L\int_0^\varepsilon|u(s,t)|^2a(s,t)dtds\]
and the operator $\mathcal T_h$ is expressed as follows
\[\mathcal T_h=-a^{-1}\partial_t(a\partial_t)+a^{-1}\partial_s(a^{-1}\partial_s)\]
where
\[a(s,t)=1-t\kappa(s)\]
and $\kappa(s)$ is the curvature of $\Gamma$ at the point $M(s)$.

 For every $c\in \R$, let $\mathcal L_h^c$ denote the operator (on $\R/2L\mathbb Z$) 
\[ \mathcal L_h^c=-(h^{1/2}+ch^{3/4}) \frac{d^2}{ds^2}-\kappa(s)-\frac12h^{1/2}\big(\kappa(s)\big)^2+ch^{7/8}\,,\]
with domain
\[\mathfrak D=\{u\in H^2(]-L,L[)~:~u(-L)=u(L)\,\&\,u'(-L)=u(L)\}\,.\]
For a self-adjoint semi-bounded operator $\mathcal P$, we denote by $(\lambda_n(\mathcal P))_{n\geq 1}$ the sequence of min-max eigenvalues. For all   $h>0$ and $\epsilon\in\R$, we introduce the following subset of $\mathbb N$
\begin{equation}\label{eq:Ih}
I_h^\epsilon=\{k \geq 1~:~\lambda_k(\mathcal T_h)<\epsilon h\}\,.
\end{equation}

\begin{theorem}\label{thm:main}
Given $0\leq \epsilon<\lambda^N_2(\Omega)$,
there exist positive constants $\mathfrak c,h_0$, such that, for all $h\in(0,h_0]$ and $n\in I_h^\epsilon$,
\begin{equation}\label{eq:main*}
h^{3/2}\min\big( \lambda_n(\mathcal L_h^{-\mathfrak c}) ,h^{-1/2}\big)\leq  \lambda_n(\mathcal T_h)+h\leq h^{3/2}\min\big(\lambda_n(\mathcal L_h^{\mathfrak c}),\epsilon h^{1/2}\big) \,.
\end{equation}
 In particular, for $\lambda_n(\mathcal T_h)<0$, we have,
\begin{equation}\label{eq:main}
 h^{3/2} \lambda_n(\mathcal L_h^{-\mathfrak c})  \leq  \lambda_n(\mathcal T_h)+h\leq h^{3/2}\min\big(\lambda_n(\mathcal L_h^{\mathfrak c}),0\big) \,.
\end{equation} 
\end{theorem}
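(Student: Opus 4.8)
\textbf{Plan of proof for Theorem~\ref{thm:main}.}

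The plan is to localize the problem near the boundary, straighten it in the tubular coordinates $(s,t)$, rescale $t$ to the natural scale $h^{1/2}$, and then compare the resulting operator with the family of one-dimensional operators $\mathcal H_{h,\beta}^\#$ analyzed in Section~\ref{sec:1d} via a tensor-product decomposition. The upper bound on $\lambda_n(\mathcal T_h)+h$ will come from constructing an $n$-dimensional space of quasi-modes and applying the min-max principle; the lower bound will come from a partition of unity (an IMS-type localization) together with the energy lower bound of Lemma~\ref{lem:gs=app} on the transverse direction.

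First I would fix a partition of unity separating a tubular neighborhood $\Omega_\varepsilon$ from the interior. By Theorem~\ref{thm:conj} (equivalently the Agmon estimates in~\eqref{eq:dec1}), any eigenfunction with eigenvalue below $\epsilon h$ (with $\epsilon<\lambda_2^N(\Omega)$) is exponentially small away from $\Gamma$, so the interior contribution is $\mathcal O(h^\infty)$ and the spectral problem reduces to the operator $-a^{-1}\partial_t(a\partial_t)+a^{-1}\partial_s(a^{-1}\partial_s)$ in the coordinates $(s,t)$ with the Robin condition $\partial_t u = -h^{-1/2}u$ at $t=0$. Next I would perform the change of scale $t = h^{1/2}\tau$, $\tau\in(0,T_h)$ with $T_h=h^{\rho-1/2}$ as in Section~\ref{sec:1d}, which turns $\mathcal T_h$ into $h\big(\mathcal H_{h}^{\rm tube}\big)$ where, after Taylor-expanding $a(s,h^{1/2}\tau)=1-h^{1/2}\tau\kappa(s)+\cdots$, the operator $\mathcal H_h^{\rm tube}$ looks like $\mathcal H_{h,\kappa(s)}^\# - h^{1/2}\kappa(s)$ plus a tangential Laplacian of size $h^{1/2}\partial_s^2$ plus curvature corrections of order $h$ and controlled remainders of order $h^{3/2}$. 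Then I would use the results of Section~\ref{sec:1d}: the transverse operator $\mathcal H_{h,\beta}^\#$ has ground state energy $-1-\beta h^{1/2}-\tfrac{\beta^2}{2}h+\mathcal O(h^{3/2})$ (see~\eqref{eq:1Dev1}) with a spectral gap of size $h^{1-2\rho}$ above it (see~\eqref{eq:1Dev2-big}). Projecting onto the transverse ground state (the Feshbach--Grushin / Born--Oppenheimer reduction) produces the effective one-dimensional operator on $\R/2L\mathbb Z$ whose leading symbol is $-h^{1/2}\partial_s^2-\kappa(s)-\tfrac12 h^{1/2}\kappa(s)^2$, i.e. exactly $\mathcal L_h^0$ up to errors absorbed into the $ch^{3/4}$ and $ch^{7/8}$ terms; the constant $\mathfrak c$ is chosen large enough to dominate all these errors, giving the two-sided sandwich $\mathcal L_h^{-\mathfrak c}\leq (\text{effective op})\leq \mathcal L_h^{\mathfrak c}$ in the form sense.

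For the upper bound I would take the $n$ lowest eigenfunctions $\phi_1,\dots,\phi_n$ of $\mathcal L_h^{\mathfrak c}$ on $\R/2L\mathbb Z$, tensor each with the transverse quasi-mode $v_h$ from~\eqref{eq:vh-gs=app} (with $\beta=\kappa(s)$), cut off by $\chi(t/\varepsilon)$, and evaluate the Rayleigh quotient of $\mathcal T_h$ on their span; Lemma~\ref{lem:1DL-N} and~\eqref{eq:1trial-state**} control the cross terms, yielding $\lambda_n(\mathcal T_h)+h\leq h^{3/2}\lambda_n(\mathcal L_h^{\mathfrak c})$, while the trivial bound $\lambda_n(\mathcal T_h)<\epsilon h$ for $n\in I_h^\epsilon$ supplies the $\min$ with $\epsilon h^{1/2}$. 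For the lower bound I would argue contrapositively: given a trial space of dimension $n$ for $\mathcal T_h$, decompose each element, near $\Gamma$, into its transverse-ground-state component $\alpha(s)v_h$ plus a remainder orthogonal to $v_h$ in $X_{h,\kappa(s)}$; Lemma~\ref{lem:gs=app} shows the remainder costs at least $\sim h^{1-2\rho}\gg h^{3/4}$ in energy (hence cannot help for eigenvalues below $\epsilon h$), so the energy is bounded below by the quadratic form of the effective operator $\mathcal L_h^{-\mathfrak c}$ evaluated on $\alpha(s)$, and min-max gives $\lambda_n(\mathcal T_h)+h\geq h^{3/2}\min(\lambda_n(\mathcal L_h^{-\mathfrak c}),h^{-1/2})$, where the $h^{-1/2}$ cap reflects that once $\lambda_n(\mathcal T_h)\geq 0$ we only retain the information $\lambda_n(\mathcal T_h)+h\geq 0$. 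Specializing to $\lambda_n(\mathcal T_h)<0$ removes the caps on the relevant side and gives~\eqref{eq:main}.

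\textbf{Main obstacle.} The delicate point is making the Born--Oppenheimer reduction rigorous with errors genuinely of lower order than the $h^{3/4}$ and $h^{7/8}$ scales that distinguish $\mathcal L_h^{\pm\mathfrak c}$: one must simultaneously handle the $s$-dependence of the transverse operator (so $\beta=\kappa(s)$ varies), the non-commutativity of $\partial_s$ with the $s$-dependent projection, and the fact that the transverse spectral gap $h^{1-2\rho}$ is only polynomially large, not $\mathcal O(1)$ — this is exactly why $\rho$ is restricted to $(\tfrac13,\tfrac12)$, so that $h^{1-2\rho}$ beats $h^{3/4}$. Controlling the commutator $[\partial_s^2,\Pi_s]$ where $\Pi_s$ projects onto $v_h(\cdot;\kappa(s))$, and showing it contributes only $\mathcal O(h)$ to the effective operator after the $h^{1/2}\partial_s^2$ prefactor, is the technical heart; everything else is a careful but standard bookkeeping of the Taylor expansion of the metric coefficient $a(s,t)$ and the remainder estimate~\eqref{eq:1trial-state*}.
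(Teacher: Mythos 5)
Your plan identifies the correct ingredients---tubular coordinates, the transverse one-dimensional quasi-mode of Section~\ref{sec:1d}, the spectral gap of Lemma~\ref{lem:gs=app}, and the commutator $[\partial_s,\Pi_s]$ as the technical crux---but the implementation in the paper is somewhat different in two places, and it is worth seeing why. First, the paper does not use an IMS partition of unity nor invoke Agmon/Theorem~\ref{thm:conj} to dispose of the interior. Instead it performs Dirichlet bracketing at $t=h^\rho$ for the upper bound (restricting to the form domain $K_h$ with vanishing trace at $t=h^\rho$, so that the tube and the interior decouple exactly) and, for the lower bound, directly bounds the quadratic form $q_h^\Omega(\psi)$ below by a sum of three decoupled forms $\mathfrak p_{h,1}+\mathfrak p_{h,2}+\mathfrak p_{h,3}$ for \emph{every} $\psi\in H^1(\Omega)$, which amounts to a Neumann bracketing. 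This avoids both the $|\nabla\chi_i|^2$ localization error of IMS and the circular-feeling appeal to eigenfunction decay that would otherwise have to be quantified at the right polynomial order, and it is why $\lambda_1^D(\Omega)$ and $\lambda_2^N(\Omega)$ appear naturally as thresholds. Second, the paper treats the upper and lower bounds symmetrically: it uses the same isometry $\chi_\psi\mapsto(k_\psi u_h^{\rm tran},\Pi_s^\perp\psi,\psi_{|\Omega\setminus\overline{\Omega_{h^\rho}}})$ and compares $q_h^\Omega$ with a sum of three quadratic forms, applying min--max to the direct sum $\oplus_i\mathfrak L_{h,i}$ (resp.\ $\oplus_i\mathfrak l_{h,i}$). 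No explicit construction of an $n$-dimensional trial space of tensor quasi-modes is needed, and no separate Feshbach--Grushin step is formalized; the ``reduction'' is just the observation that $\mathfrak L_{h,2},\mathfrak L_{h,3}$ have non-negative spectra (Lemma~\ref{lem:gs=app} again, via the choice $\rho=7/16$), so the negative eigenvalues of the direct sum are exactly those of $\mathfrak L_{h,1}=\!-h+h^{3/2}\mathcal L_h^{\pm\mathfrak c}$. Your quasi-mode construction for the upper bound would also work, but it requires an extra argument for linear independence and cross-term control that the direct-sum bound gives for free; conversely, the paper's route demands the somewhat delicate but explicit commutator estimate~\eqref{eq:en-d-Pi-s}, which is precisely where the choice $\rho=7/16$ is made so that the $\mathcal O(h^{1/4})$ kinetic error and the $\mathcal O(h^{2\rho})=\mathcal O(h^{7/8})$ potential error both fit under the $\mathfrak c h^{3/4}$ and $\mathfrak c h^{7/8}$ tolerances built into $\mathcal L_h^{\pm\mathfrak c}$---a point your plan flags qualitatively (``$h^{1-2\rho}$ must beat $h^{3/4}$'') but does not pin down to the specific exponent.
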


\begin{remarks}\label{rem:main}\rm~\\
{\bf 1.} Note that for $\epsilon<0$ a stronger result is proven in  \cite{HKR}.\medskip\\
{\bf 2.}  Let  $\lambda_1^D(\Omega)$  be the first  eigenvalue of the Dirichlet Laplacian  on $\Omega$. It follows from \cite{S, W} that $\lambda_2^N(\Omega)<\lambda_1^D(\Omega)$ (see also \cite[Eq.~(2.2)]{A}). The upper bound in \eqref{eq:main*} actually holds for $\epsilon<\lambda_1^D(\Omega)$.\medskip\\
{\bf 3.}  A comparison similar to the one in Theorem~\ref{thm:main}  has been proved in \cite{HKR} when $n\in I_h^{-\epsilon}:=\{k\geq 1\,:\, \lambda_k(\mathcal T_h)<-\epsilon h\}$ with $0<\epsilon<1$ a fixed constant. More precisely, the effective operator in \cite{HKR} is of the form
\[ -\big(h^{1/2}+h b(s)\big)\frac{d^2}{ds^2}-\kappa(s)\,,\]
 with $b(s)=\mathcal O(1)$ uniformly w.r.t. $s$. 
Our result extends that in \cite{HKR}  all the way up to $\epsilon=0$, but with a worse remainder term for the coefficient of $\frac{d^2}{ds^2}$,  in order to consider   all the non-positive eigenvalues.\medskip\\
{\bf 4.} Note that for the realization of $-\partial_s^2$ on $\mathbb R/2L \mathbb Z$, the spectrum is
\[\{ \pi^2 L^{-2}( n-1)^2,~n\geq 1\}\]
with the first eigenvalue being simple and the others being of multiplicity $2$, hence
\[\lambda_1(-\partial_s^2)= 0\]
and
\[ \lambda_{2k}(-\partial_s^2)=\lambda_{2k+1}(-\partial_s^2) =\pi^2 L^{-2}k^2,~k= 1,2,\cdots.\]
Theorem~\ref{thm:main} then yields  the existence of $\mathfrak c >0$  and $h_0>0$ such that for $h\in (0,h_0]$ and $n\in\{2k,2k+1\}$ with $\lambda_n(\mathcal T_h) <\epsilon h^2$, we have 
\[ h^{-3/2}\lambda_n(\mathcal T_h)\leq -h^{-1/2}+\frac{\pi^2 k^2}{L^2}\big(1+\mathfrak ch^{1/4}\big)h^{1/2} -\kappa_{\min} + M_+h^{1/2}+\mathfrak ch^{7/8}\,,\]
and
\[ h^{-3/2}\lambda_n(\mathcal T_h)\geq -h^{-1/2}+\frac{\pi^2 k^2}{L^2}\big(1-\mathfrak ch^{1/4}\big)h^{1/2}-\kappa_{\max} + M_-h^{1/2}-\mathfrak ch^{7/8}\,,\]
 where
\begin{equation*}
\begin{array}{ll}
\kappa_{\min}=\min_{s\in[-L,L)}\kappa(s)\,, &  
 \kappa_{\max}=\max_{s\in[-L,L)}\kappa(s),\\
M_-=-\max_{s\in[-L,L)}|\kappa(s)|^2\,, & M_+=-\min_{s\in[-L,L)}|\kappa(s)|^2\,.\end{array}
\end{equation*}
These estimates  yield Theorem~\ref{thm:main**}. \medskip\\
{\bf 5.} For a positive integer $k=k(h)\gg h^{-1/4}$ satisfying 
\[ k^2\leq (1+\mathfrak ch^{1/4})^{-1}\Big(1+h^{1/2}\kappa_{\max}+\frac12h\kappa_{\min}^2-\mathfrak ch^{7/8}\Big)h^{-1}\,,\] we get 
\[\lambda_{2k}(\mathcal T_h)\sim -h+\pi^2L^{^2}(2k-1)^2h^2 ~\&~ \lambda_{2k+1}(\mathcal T_h)-\lambda_{2k}(\mathcal T_h)=\mathcal O(h^{3/2})\,.
\]
\end{remarks}

\subsection{Decomposition of $L^2(\Omega)$}

Let $\rho\in(\frac13,\frac12)$  and consider the domain $\Omega_{h^\rho}$ defined by \eqref{eq:Om-ep}.

We decompose the Hilbert space $L^2(\Omega)$ as $L^2(\Omega_{h^\rho})\oplus L^2(\Omega\setminus\overline\Omega_{h^\rho})$. We will decompose further the space $L^2(\Omega_{h^\rho})$ by considering the orthogonal projection on the  function 
\[u^{\rm tran}_h(s,t)= c_hh^{-1/4}\chi\big(h^{-\rho}t\big) u^{\rm app}_{h,\kappa(s)}(h^{-1/2}t)\]
where $u_{h,\kappa(s)}^{\rm app}$ is the function defined by \eqref{eq:1trial-state}, and
$\chi\in C_c^\infty(\R)$ satisfies $0\leq \chi\leq 1$, ${\rm supp}\,\chi\subset (-1,1)$ and  $\chi_{/_{[-\frac12,\frac12]}}=1$. 
The coefficient $c_h$ is determined by  $\|u^{\rm tran}_h\|_{L^2(\Omega)}=1$ and satisfies $c_h=1+\mathcal O(h^\infty)$.

Note that $ u^{\rm tran}_h\in D(\mathcal T_h)$ and by \eqref{eq:1trial-state*}
\[   \Big(-a^{-1}\partial_t(a\partial_t)-\lambda_h(s)\Big)u^{\rm tran}_h(s,t) =\mathcal O(h^{\frac32+2\rho})\,,\]
with
\begin{equation}\label{eq:lambda(s)}
\lambda_h(s)=-h-h^{3/2}\kappa(s)-\frac12h^2\kappa(s)^2\,.
\end{equation}
We introduce the projections in the space $L^2(\Omega_{h^\rho})$,
\[\Pi_s \psi = \langle \psi,u^{\rm tran}_h\rangle u^{\rm tran}_h\quad{\rm and}\quad \Pi_s^\bot\psi=\psi_{/_{\Omega_{h^\rho}}}-\Pi_s\psi\,,  \]
 and the isometry
\begin{equation}\label{eq:isom-L2-Om-h-rho}
\psi\ni L^2(\Omega_{h^\rho})\mapsto  (\Pi_s\psi,\Pi_s^\bot\psi)\in V_h\oplus W_h 
\end{equation}
where
\begin{equation*}
\begin{array}{ll}
V_h&=L^2\big([-L,L)\big)\otimes\{ {\rm span} (  u^{\rm tran}_h) \} \\ 
&=\{v\in L^2(\Omega_{h^\rho}),~\exists\,k\in L^2([-L,L)),~v(s,t)=k(s)u^{\rm tran}_h(s,t)\}
\end{array}
\end{equation*}
and
\[W_h=\{v\in L^2(\Omega_{h^\rho}),~\int_{0}^{h^\rho} v(s,t) u^{\rm tran}_h(s,t)\,(1-t\kappa(s))dt=0\}\,.\]
 Using \eqref{eq:isom-L2-Om-h-rho} and the decomposition of $L^2(\Omega)$ as $L^2(\Omega_{h^\rho})\oplus L^2(\Omega\setminus\overline\Omega_{h^\rho})$, we construct the following isometry 
\begin{equation}\label{eq:isom-L2-Om-h-rho}
\psi\ni L^2(\Omega)\mapsto \chi_\psi:= (\Pi_s\psi,\Pi_s^\bot\psi,\psi_{/_{\Omega\setminus\overline{\Omega_{h^\rho}}}})\in V_h\oplus W_h\oplus L^2(\Omega\setminus\overline\Omega_{h^\rho} )
\end{equation}\Bk
Note that
\[\|\psi\|^2_{L^2(\Omega)}=\|\chi_\psi\|^2=\int_{-L}^L |k_\psi(s)|^2ds+\int_{\Omega_{h^\rho}} |\Pi_s^\bot\psi|^2dx+\int_{\Omega\setminus\overline{\Omega_{h^\rho}}}|\psi|^2dx\,,\]
where
\begin{equation}\label{eq:k-psi}
k_\psi(s):= \langle \psi, u^{\rm tran}_h\rangle=\int_{0}^{h^\rho} \psi(s,t)\,u^{\rm tran}_h(s,t)\,(1-t\kappa(s))dt\,.
\end{equation}
\subsection{Decomposition of the quadratic form}
We examine the quadratic form
\begin{equation}\label{eq:qf}
\begin{aligned}
q_h^\Omega(\psi)&:=h^2\int_\Omega |\nabla \psi|^2dx-h^{3/2}\int_{\partial\Omega}|\psi|^2ds(x)\\
&=q_h^{\Omega_{h^\rho}}(\psi)+q_{h,\rho}^{\rm int}(\psi)\\
\end{aligned}
\end{equation}
where
\[q_{h,\rho}^{\rm int}(\psi)=\int_{\Omega\setminus\overline\Omega_{h^\rho}}|\nabla\psi|^2\,dx\,.\]
Working in the $(s,t)$ coordinates, we express the quadratic form $q_h^{\Omega_{h^\rho}}(\psi)$ as follows
\[q_h^{\Omega_{h^\rho}}(\psi)=h^2\int_{-L}^L\left(\int_0^{h^\rho}\Big( |\partial_t\psi|^2+a^{-2}|\partial_s\psi|^2\Big)a dt-h^{-1/2}|\psi(s,t=0)|^2 \right)\,.\]
Freezing the $s$-variable, the $\Pi_s$ is an orthogonal projection in the weighted Hilbert space $L^2\big((0,h^\rho);a(s,t)dt\big)$; consequently,
\begin{align*}
q^{\rm tran}_h(\psi)&:=h^2\int_0^{h^\rho}|\partial_t\psi|^2(1-t\kappa(s))dt-h^{3/2}|\psi(s,t=0)|^2\\
&=q^{\rm tran}_h(\Pi_s\psi)+q^{\rm tran}_h(\Pi_s^\bot\psi)
\end{align*}
and
\[
\int_0^{h^\rho}|\partial_s\psi|^2(1-t\kappa(s))dt=\int_0^{h^\rho}\Big(|\Pi_s\partial_s\psi|^2+|\Pi_s^\bot\partial_s\psi|^2\Big)(1-t\kappa(s))dt
\]
We have (see \eqref{eq:lambda(s)})
\begin{equation}\label{eq:qf-tran}
\begin{aligned}
q^{\rm tran}_h(\Pi_s\psi)&=\big(\lambda_h(s)+\mathcal O(h^{\frac32+2\rho})\big)\int_{0}^{h^\rho}|\Pi_s\psi|^2(1-t\kappa(s))dt\\
&=\big(\lambda_h(s)+\mathcal O(h^{\frac32+2\rho})\big)|k_\psi(s)|^2\,,
\end{aligned}
\end{equation}
\begin{equation}\label{eq:qf>0}
q^{\rm tran}_h(\Pi_s^\bot\psi) \gtrsim h^{2-2\rho}\,,
\end{equation}
and, setting $\mathfrak K =8\|\kappa\|_\infty$,
\[1+2t\mathfrak K\leq a^{-2}\leq 1+t\mathfrak K\,.\]
Therefore, we end up with the following upper bound of the quadratic form
\begin{multline}\label{eq:qf-ub-2D}
q_h^{\Omega_{h^\rho}}(\psi)
\leq \\
h^2\int_{-L}^L\left(\big(\lambda_h(s)+\mathcal O(h^{\frac32+2\rho})\big) |k_\psi(s)|^2+\int_0^{h^\rho}(1+\mathfrak Kt) |\Pi_s\partial_s\psi|^2adt\right)ds\\
+
h^2\int_{-L}^L\int_0^{h^\rho} \Big(|\partial_t\Pi_s^\bot\psi|^2+(1+\mathfrak Kt) |\Pi_s^\bot\partial_s\psi|^2\Big)a dtds\\
-h^{3/2}\int_{-L}^L|\Pi^\bot_s\psi(s,t=0)|^2ds\,.
\end{multline}
The same argument yields the following lower bound
\begin{multline}\label{eq:qf-lb-2D}
q_h^{\Omega_{h^\rho}}(\psi)
\geq \\
h^2\int_{-L}^L\left(\big(\lambda_h(s)+\mathcal O(h^{\frac32+2\rho})\big) |k_\psi(s)|^2+\int_0^{h^\rho}(1-\mathfrak Kt) |\Pi_s\partial_s\psi|^2adt\right)ds\\
+
h^2\int_{-L}^L\int_0^{h^\rho} \Big(|\partial_t\Pi_s^\bot\psi|^2+(1-\mathfrak Kt) |\Pi_s^\bot\partial_s\psi|^2\Big)a dtds\\
-h^{3/2}\int_{-L}^L|\Pi^\bot_s\psi(s,t=0)|^2ds\,.
\end{multline}
Let us now handle the term $|\Pi_s\partial_s\psi|$. Let us introduce $u=\partial_s\psi$. It is easy to check the following identities,
\begin{align*}
\partial_s\Pi_s\psi 
&=\partial_s\Big( k_\psi(s) u^{\rm tran}_h\Big)\\
&= k'_\psi(s) u^{\rm tran}_{h}+k_\psi(s) \partial_s u^{\rm tran}_h \\
&=\big(k_u -\kappa'(s)k_{t\psi}+\langle \psi,\partial_s u^{\rm tran}_h\rangle\big)u^{\rm tran}_{h}+k_\psi(s) \partial_s u^{\rm tran}_h\,.
\end{align*}
Therefore,
\[ \Pi_s\partial_s\psi=k'_\psi(s) u^{\rm tran}_{h} + \underset{:=w_\psi(s,t) }{\underbrace{k_\psi(s) \partial_s u^{\rm tran}_h +\big(\kappa'(s)k_{t\psi}(s) -\langle \psi,\partial_s u^{\rm tran}_h\rangle\big)u^{\rm tran}_{h}}}\,. \]
Note that if we perform the change of variable, $t=h^{1/2}\tau$, we can write
\[\int_0^{h^\rho} t |u^{\rm tran}_{h}(s,t)|^2dt=\mathcal O(h^{1/2})\]
uniformly with respect to $s$. In a similar manner, we can check that
\[ \int_0^{h^\rho}|\partial_s u^{\rm tran}_h|^2dt=\mathcal O(h)\,.\] 
Consequently, if we introduce the norms
\[N^\pm(f)=\left(\int_{-L}^L\int_0^{h^\rho} (1\pm\mathfrak Kt) |f|^2adtds\right)^{1/2}\,,\] we get that
\[ N^\pm (k_\psi u^{\rm tran}_h)^2=\big(1+\mathcal O(h^{1/2})\big)\int_{-L}^L |k_\psi'(s)|^2ds\]
and
\[ N^\pm (w_\psi)^2=\mathcal O(h^{1/2})\int_{-L}^L\int_0^{h^\rho}|\psi|^2dsdt=\mathcal O\big(h^{1/2}\|\psi\|^2_{L^2(\Omega_{h^{\rho}})}\big)\,. \]
Armed with the foregoing estimates, and Cauchy's inequality, we write, for all $\eta\in(0,1)$,
\[ N^\pm(\Pi_s\partial_s\psi)\geq (1-\eta)N^\pm (k_\psi u^{\rm tran}_h)^2-\eta^{-1}N^\pm (w_\psi)^2  \]
and
\[ N^\pm(\Pi_s\partial_s\psi)\leq (1+\eta)N^\pm (k_\psi u^{\rm tran}_h)^2+(1+\eta^{-1})N^\pm (w_\psi)^2\,.  \]
Choosing $\eta=h^{1/4}$, we eventually get estimates for the energy of $\Pi_s\partial_s\psi$ as follows
\begin{equation}\label{eq:en-d-Pi-s}
\begin{aligned}
 &\left| \int_{-L}^L\int_0^{h^\rho}\int_0^{h^\rho}(1\pm\mathfrak Kt) |\Pi_s\partial_s\psi|^2adtds -\int_{-L}^L |k'_\psi(s)|^2ds\right|\\
&\leq M h^{1/4} \Big(\int_{-L}^L |k'_\psi(s)|^2ds+  \|\psi\|_{L^2(\Omega_{h^\rho})}^2\Big)\\
&=M h^{1/4} \Big(\int_{-L}^L |k'_\psi(s)|^2ds+  \underset{=\int_{[-L,L)}|k_\psi(s)|^2ds}{\underbrace{\|\Pi_s\psi\|^2_{L^2(\Omega_{h^\rho})}}}+\|\Pi_s^\bot\psi\|^2_{L^2(\Omega_{h^\rho})}\Big)
\end{aligned}
\end{equation}
for $h\in(0,h_0] $, where  $M,h_0$ are positive constants. 

\subsection{Comparison of eigenvalues}

\subsubsection{Upper bounds}
 Consider the self-adjoint operator $\mathcal T_h^{K_h}$ in $L^2(\Omega)$, defined by the quadratic form
\begin{equation}\label{eq:qf-Kh}
K_h\ni u\mapsto h^2\int_\Omega |\nabla \psi|^2dx-h^{3/2}\int_{\partial\Omega}|u|^2ds(x)
\end{equation}
where the form domain $K_h$ consists of functions with zero trace on the boundary of $\Omega\setminus\overline\Omega_{h^\rho}$, i.e.
\[
K_h=\{ v\in H^1(\Omega)~:~ v=0~{\rm for~}{\rm dist}(x,\Gamma)=h^\rho\}
\]
By the min-max principle and comparison of the form domains, for all $n\geq 1$,
\begin{equation}\label{eq:Dirch-brak}
\lambda_n(\mathcal T_h)\leq \lambda_n(\mathcal T_h^{K_h})\,.
\end{equation}
For all $\psi\in K_h$, we investigate the quadratic form
\[
q_h^\Omega(\psi)=h^2\int_\Omega |\nabla \psi|^2dx-h^{3/2}\int_{\partial\Omega}|u|^2ds(x)\leq \mathfrak q_h(\chi_\psi)
\]
where 
\[\mathfrak q_h(\chi_\psi):=\mathfrak q_{h,1}(k_\psi)+\mathfrak q_{h,2}( f_\psi)+\mathfrak q_{h,3}(u_\psi)\]
and
\[\chi_\psi=(k_\psi u^{\rm tran}_h, f_\psi, u_\psi)\,.\]
The quadratic forms $\mathfrak q_{h,i}$ are defined as follows
\begin{align*}
&\mathfrak q_{h,1}(k_\psi)=\int_{-L}^L \Big( \big(\lambda_h(s)+\mathcal O(h^{\frac32+2\rho})\big)|k_\psi(s)|^2+\big(h^2+\mathcal O(h^{\frac94})\big)|k_\psi'(s)|^2\Big)ds\,,\\
&\mathfrak q_{h,2}(f_\psi)=h^2\int_{-L}^L\left(\int_0^{h^\rho}\Big( |\partial_tf_\psi|^2+\mathcal O(h^{\frac14})|f_\psi|^2\Big)adt-h^{-\frac12}|f_\psi(s,0)|^2\right)ds\\
&\mathfrak q_{h,3}(u_\psi)=h^2\int_{\Omega_{h^\rho}}|\nabla u_\psi|^2dx\geq h^2\lambda_1^D(\Omega)\|u_\psi\|_{L^2(\Omega_{h^\rho})}^2\,.
\end{align*}
For all $i\in\{1,2,3\}$, let $\mathfrak L_{h,i}$ be the operator defined by the quadratic form $\mathfrak q_{h,i}$. By the min-max principle,
\[\lambda_n(\mathcal T_h^{K_h})\leq \lambda_n\big(\oplus_{i=1}^3 \mathfrak L_{h,i}\big)\,.\]
We insert this into \eqref{eq:Dirch-brak} and choose $\rho=\frac7{16}\in(\frac13,\frac12)$. 
Note that $\lambda_n(\mathfrak L_{h,3})> 0$  for all $n\geq 1$. 

Since $f_\psi\bot u_h^{\rm tran}$ in $L^2\big((0,h^{\rho});(1-t\kappa(s))dt)$, we get by Lemma~\ref{lem:gs=app} and our choice of $\rho=\frac7{16}$ that $\lambda_n(\mathfrak L_{h,2})\gtrsim h^{2-2\rho}=h^{9/8}>0$ for all $n\geq 1$.

Thus, we end up with
\begin{equation}\label{eq:Diric-brak-*}
\forall\,n\in I_h,~\lambda_n(\mathcal T_h)\leq -h+h^{3/2}\min\Big(\lambda_n(\mathcal L_h^+) ,0\Big)\,,
\end{equation} 
where, for some constant $\mathfrak c_+>0$, $\mathcal L_h^+$ is the operator acting on $L^2\big([-L,L)\big)$ as follows,
\begin{equation}\label{eq:op-Lh+}
\mathcal L_h^+=-h^{1/2}(1+\mathfrak c_+h^{1/4})\frac{d^2}{ds^2}-\kappa(s)-\frac{1}2h^{1/2}\kappa(s)^2+\mathfrak c_+h^{7/8}\,.
\end{equation}

If we consider the eigenvalues of $\mathcal T_h$ below the energy level $\epsilon h^2$, with $\epsilon<\lambda_1^D(\Omega)$, we still get
\[\lambda_n(\mathcal T_h)\leq -h+h^{3/2}\min\Big(\lambda_n(\mathcal L_h^+) ,\epsilon h^2\Big)\]

\subsubsection{Lower bounds}

For all $\psi\in H^1(\Omega)$, we write the lower bound
\[
q_h^\Omega(\psi)=h^2\int_\Omega |\nabla \psi|^2dx-h^{3/2}\int_{\partial\Omega}|u|^2ds(x)\geq \mathfrak p_h(\chi_\psi)
\]
where 
\[\mathfrak p_h(\chi_\psi):=\mathfrak p_{h,1}(k_\psi)+\mathfrak p_{h,2}( f_\psi)+\mathfrak p_{h,3}(u_\psi)\]
and
\[\chi_\psi=(k_\psi u^{\rm tran}_h, f_\psi, u_\psi)\,.\]
The quadratic forms $\mathfrak p_{h,i}$ are defined as follows
\begin{align*}
&\mathfrak p_{h,1}(k_\psi)=\int_{-L}^L \Big( \big(\lambda_h(s)-\mathcal O(h^{\frac32+2\rho})\big)|k_\psi(s)|^2+\big(h^2-\mathcal O(h^{\frac94})\big)|k_\psi'(s)|^2\Big)ds\,,\\
&\mathfrak p_{h,2}(f_\psi)=h^2\int_{-L}^L\left(\int_0^{h^\rho}\Big( |\partial_tf_\psi|^2-\mathcal O(h^{\frac14})|f_\psi|^2\Big)adt-h^{-\frac12}|f_\psi(s,0)|^2\right)ds\\
&\mathfrak p_{h,3}(u_\psi)=h^2\int_{\Omega_{h^\rho}}|\nabla u_\psi|^2dx\geq h^2\lambda_1^N(\Omega_{h^\rho})\|u_\psi\|_{L^2(\Omega_{h^\rho})}^2=0\,.
\end{align*}
For all $i\in\{1,2,3\}$, let $\mathfrak l_{h,i}$ be the operator defined by the quadratic form $\mathfrak p_{h,i}$. By the min-max principle,
\[\lambda_n(\mathcal T_h)\geq \lambda_n\big(\oplus_{i=1}^3 \mathfrak l_{h,i}\big)\,,\]
with $\lambda_1(\mathfrak l_{h,3})\geq  0$
 and $\lambda_n(\mathfrak l_{h,3})>  0$ for all $n\geq 2$. We choose $\rho=\frac7{16}$ and observe that, by Lemma~\ref{lem:gs=app}, $\lambda_n(\mathfrak l_{h,2})\gtrsim h^{9/8}>  0$ for all $n\geq 1$. Therefore, for some constant $\mathfrak c_->0$ and $\mathcal L_h^{-}$  the operator (acting on $L^2\big([-L,L)\big)$)
\begin{equation}\label{eq:op-Lh-}
\mathcal L_h^{-}=-h^{1/2}(1-\mathfrak c_-h^{1/4})\frac{d^2}{ds^2}-\kappa(s)-\frac{1}2h^{1/2}\kappa(s)^2-\mathfrak c_-h^{7/8}\,,
\end{equation}
we get
\begin{equation}\label{eq:Diric-brak-**}
\forall\,n\in I_h^0,~\lambda_n(\mathcal T_h)\geq -h+h^{3/2}\lambda_n(\mathcal L_h^{-})\,.
\end{equation} 
When dealing with the eigenvalues of $\mathcal T_h$ below  $\epsilon h^2$, with $\epsilon<\lambda_2^N(\Omega)$, we still get
\[\lambda_n(\mathcal T_h)\geq -h+h^{3/2}\min\big(\lambda_n(\mathcal L_h^{-}),h^{-1/2}\big)\,,\]
because 
\[\min\big(-h+h^{3/2}\lambda_n(\mathcal L_h^{-}),0\big)=-h+\min\big(h^{3/2}\lambda_n(\mathcal L_h^{-}),h\big)\,.  \]

\begin{remark}\label{rem:e.v.<ep}~\rm Consider $\epsilon\in(0,\lambda_2^N(\Omega))$. Since $\lambda_1(\mathfrak l_{h,3})=0$ is a simple eigenvalue, the min-max principle allows us to extend \eqref{eq:Diric-brak-**} as follows. 
Set $N_*(h)=\max\{n\geq 1,\,-h+h^{3/2}\lambda_n(\mathcal L_h^{-})<0\}$. Then, for $h$ small enough, we have
\[\forall\,n\in I_h^\epsilon\cap[2+N_*(h),+\infty),~\lambda_n(\mathcal T_h)\geq -h+h^{3/2}\lambda_n(\mathcal L_h^{-})\,. \] 
\end{remark}

\subsection*{Acknowledgements}~
The authors would like to thank  Gerd Grubb,  Thierry Daud\'e and Fran\c cois Nicoleau for  helpful discussions.  The first author was inspired by the very interesting talks 
 proposed at the seminar ``Spectral geometry in the clouds" organized  by A. Girouard and J. Lagac\'e and initially due to this terrible COVID period.  The second author is supported  by the Lebanese University within the project \textit{``Analytical and numerical aspects of the Ginzburg Landau model''}.

\end{document}